\newtheorem{thapp}{Theorem}
\newtheorem{algo}{Algorithm}
\begin{document}\sloppy

\title{A discrete framework to find the optimal matching between manifold-valued curves
}


\author{Alice Le Brigant
}


\institute{
              Institut Math\'ematique de Bordeaux,
              UMR 5251, Universit\'e de Bordeaux and CNRS, France\\
              \email{alice.lebrigant@math.u-bordeaux.fr}           
}

\date{}

\maketitle

\begin{abstract}
The aim of this paper is to find an optimal matching between manifold-valued curves, and thereby adequately compare their shapes, seen as equivalent classes with respect to the action of reparameterization. Using a canonical decomposition of a path in a principal bundle, we introduce a simple algorithm that finds an optimal matching between two curves by computing the geodesic of the infinite-dimensional manifold of curves that is at all time horizontal to the fibers of the shape bundle. We focus on the elastic metric studied in \cite{moi17} using the so-called square root velocity framework. The quotient structure of the shape bundle is examined, and in particular horizontality with respect to the fibers. These results are more generally given for any elastic metric. We then introduce a comprehensive discrete framework which correctly approximates the smooth setting when the base manifold has constant sectional curvature. It is itself a Riemannian structure on the product manifold $M^{n}$ of "discrete curves" given by $n$ points, and we show its convergence to the continuous model as the size $n$ of the discretization goes to $\infty$. Illustrations of optimal matching between discrete curves are given in the hyperbolic plane, the plane and the sphere, for synthetic and real data, and comparison with dynamic programming \cite{srivastava16} is established.

\keywords{Shape analysis \and optimal matching \and manifold-valued curves \and discretization}
\end{abstract}

\section{Introduction}				
\label{intro}

\subsection{Context}

The study of curves and their shapes is a research area with numerous and varied applications, which is why it has known a great deal of activity over the past years. These curves can be closed or open, and take their values in a Euclidean space or more generally in a Riemannian manifold. To name a few examples, closed plane curves are central in shape analysis of objects \cite{younes12}; the study of trajectories on the Earth requires to deal with open curves on the sphere \cite{zhang16}; and in signal processing, locally stationary Gaussian processes can be represented by open curves in the hyperbolic plane, seen as the statistical manifold of Gaussian densities \cite{moi16}, \cite{moi17}. Here we are concerned with the study of open curves in a manifold $M$ of constant sectional curvature.

\begin{figure*}
\includegraphics[width=0.3\textwidth]{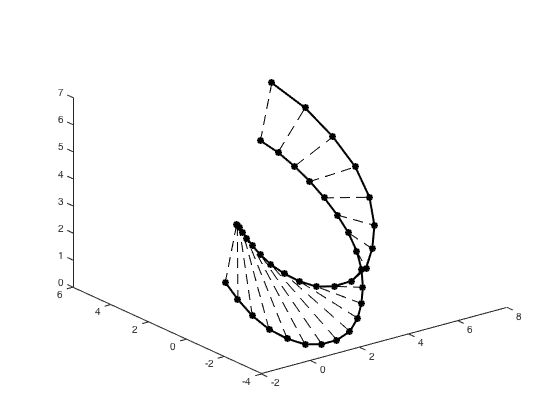}
\includegraphics[width=0.3\textwidth]{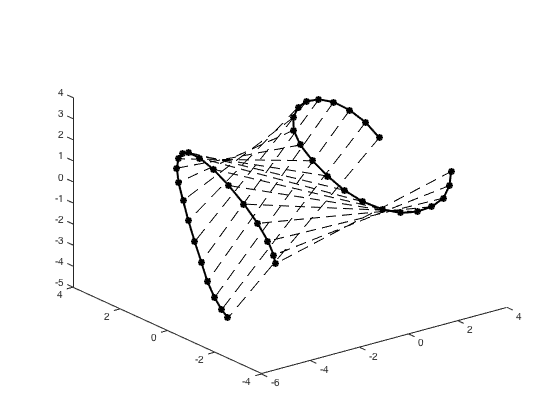}
\includegraphics[width=0.3\textwidth]{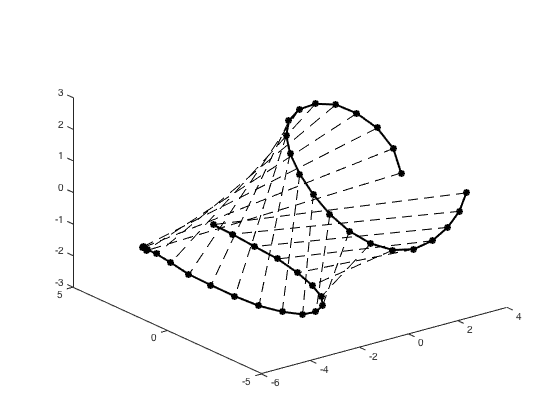}
\caption{\!Examples of (suboptimal) matchings between shapes in $\mathbb R^3$. Correspondence between points is shown with dashed lines.}
\label{fig:matchings}
\end{figure*}

There are naturally many ways to go about comparing curves in a manifold. One way is to see the space of manifold-valued curves as an (infinite-dimensional) manifold $\mathcal M$ itself, and equip it with a Riemannian metric $G$. Then the geodesic between two curves in $\mathcal M$ describes how one optimally deforms into the other, while its length gives a measure of dissimilarity : the geodesic distance. The advantage of this strategy is that it provides all the convenient tools of the Riemannian framework. An interesting property for the metric $G$, from the point of view of the applications, is invariance with respect to re-parameterization: for closed curves, this amounts to considering only the shape of an object; for an open curve representing the evolution in time of a given process, this allows us to analyze it regardless of speed or pace. A popular strategy is to consider the quotient space $\mathcal S$ of curves modulo reparameterization, where two curves are considered identical if they pass through the same points of $M$ but at different speeds, or equivalently when one can be obtained by reparameterizing the other. This quotient space is often called the \emph{shape space}. If the Riemannian metric $G$ defines the same scalar product at all points of $\mathcal M$ which project on the same "shape", then $G$ induces a Riemannian structure on the quotient space. Computing geodesics between two shapes for that metric can be done in two steps : (1) establishing an "optimal matching" through the choice of two parameterizations, one for each shape, that will put in correspondence the points on the first shape with points on the second shape, as in Figure \ref{fig:matchings}, and (2) computing the geodesic between the two parameterized curves obtained. Both steps depend on the choice of the metric.

\subsection{Previous work}

Since the simplest metric one can think of, the $L^2$-metric (slightly modified to stay constant along the fibers), induces a vanishing distance on the quotient space \cite{michor05}, different classes of metrics have been studied to perform shape analysis. The large class of Sobolev metrics involves higher order derivatives to overcome the vanishing problem of the $L^2$-metric \cite{michor07}. A first-order Sobolev metric for plane curves was introduced in \cite{younes98} and used in \cite{younes08}, which can be mapped to the $L^2$-metric by a change of coordinates, namely by considering the complex square root of the speed of the curve. This metric was modified by the authors of \cite{mio07} to define the family of \emph{elastic metrics} $G^{a,b}$, parameterized by two constants $a$ and $b$ which control the degree of bending and stretching of the curve. In \cite{srivastava11}, the authors show that for a certain choice of parameters, the elastic metric can again be mapped to the $L^2$-metric using the so-called \emph{square root velocity} (SRV) coordinates, where a curve is represented by its speed renormalized by the square root of its norm. The SRV framework was generalized in \cite{bauer12} for any elastic metric with weights $a$ and $b$ satisfying a certain relation. A quotient structure for the metric used in \cite{srivastava11} is carefully developed in \cite{lahiri15}, where the authors prove that if at least one of two curves is piecewise-linear, then there exists a minimizing geodesic between the two, and give a precise algorithm to solve the matching problem. In \cite{bruveris15}, it is proven that in the same framework, there always exists an optimal reparameterization realizing the minimal distance between two $C^1$ plane curves. Another approach is proposed in \cite{tumpach16}, where the authors restrict to arc-length parameterized curves and characterize the horizontal space of the quotient structure for these curves in the elastic framework.

Concerning manifold-valued curves, the geodesic equations for Sobolev metrics in the space of curves and in the shape space were given in \cite{bauer11} in terms of the gradient of the metric with respect to itself. A generalization of the SRV framework to manifold-valued curves was introduced in \cite{zhang15} and used in \cite{zhang16}, while another one was proposed in \cite{moi17}. Extension to curves in a Lie group or a homogeneous space can also be found in \cite{celledoni16}, \cite{celledoni17}, \cite{su17}. Both metrics in \cite{zhang15} and \cite{moi17} coincide with the metric of \cite{srivastava11} in the flat case, however they define different Riemannian structures when the base space has curvature. The difference lies in the way computations are made -- in \cite{zhang15} and \cite{zhang16} they are moved to the tangent spaces at the origins of the curves, resulting in simpler computations, whereas in \cite{moi17} they are done directly in the base manifold $M$, transporting data pointwise across $M$ from one curve to the other, thus making the comparison more sensitive to the local "geography" of $M$. When comparing the geodesic distances, this difference is measured by a curvature term (see \cite{moi17}, Prop. 3). In addition, unlike the metric of \cite{zhang15}, the one in \cite{moi17} can be written as an elastic metric (with $a=2b=1$). The work of \cite{zhang15} is applied to curves in the space of symmetric positive definite matrices, while the case of spherical trajectories is investigated in \cite{zhang16}, where the authors exhibit simplifications. In both cases, optimal matching between curves is achieved through dynamic programming. On the other hand, the Riemannian structure in \cite{moi17} is applied to curves in the hyperbolic plane, but the question of optimal matching is not studied. This is remedied here.

\subsection{Contributions of this paper}

The aim of this paper is three-fold : (1) study the quotient structure associated to the Riemannian metric $G$ of \cite{moi17}, and more generally to any elastic metric on manifold-valued curves, (2) exploit this knowledge of horizontality in an algorithm that computes an optimal matching between shapes -- and whose range of application goes beyond elastic metrics -- and (3) give a comprehensive discrete Riemannian framework on the finite-dimensional manifold of "discrete curves" that correctly approximates these procedures for the particular case of the SRV framework and constant sectional curvature. More specifically,

\begin{itemize}
\item[$\bullet$] we characterize the horizontal subspace associated to the quotient shape space, for any elastic metric and in particular for the SRV metric $G$. Namely, we decompose any infinitesimal deformation of a smooth curve into a vertical part, which reparameterizes the curve without changing its shape, and a $G$-orthogonal horizontal part. This is done in a similar way as in \cite{tumpach16} but without restriction to arc-length parameterized curves.

\item[$\bullet$] We write any path in the space of smooth curves as a horizontal path composed with a path of reparameterizations. We use this decomposition to define an algorithm that, for a fixed parameterization of one of two curves, approximates the horizontal geodesic linking it to the fiber of the other curve, thereby yielding the "closest parameterization" of the latter with respect to the fixed parameterization of the former. We refer to this correspondence as an \emph{optimal matching}. This algorithm can be used for any metric structure as long as one knows how to compute geodesics and characterize horizontality. Comparison with the popular dynamic programming approach \cite{srivastava16} is established in the simulations section.

\item[$\bullet$]We define a discrete version of $G$ that is a Riemannian metric on the finite-dimensional manifold $M^{n+1}$ of "discrete curves" given by $n+1$ points, when $M$ has constant sectional curvature $K\in\{-1,0,1\}$. We show that the energy of a path of discrete curves converges to the energy of the limit path as the size $n$ of the discretization goes to $\infty$. We give the geodesic equations for this metric, characterize the Jacobi fields, describe geodesic shooting, and show simulations on synthetic and real data in $\mathbb R^2,\mathbb R^3$, the hyperbolic plane and the sphere.
\end{itemize}

\subsection{Outline}

After reminding the continuous model previously introduced in \cite{moi17}, Section 2 describes the horizontal space of the quotient structure and a way to compute horizontal geodesics. In Section 3, we introduce the discretization, and give the convergence result toward the continuous model, which is later proved in Section 5. Section 4 shows results of simulations in the three settings of positive, zero and negative curvature.

\section{The continuous model} 			
\label{sec:contmodel}

\subsection{Notations}

Let $\left(M,\langle \cdot,\cdot \rangle\right)$ be a Riemannian manifold. We first introduce a few notations. The norm associated to the Riemannian metric $\langle \cdot, \cdot \rangle$ is denoted by $| \cdot |$, the Levi-Civita connection by $\nabla$ and the curvature tensor by $\mathcal R$. If $t\mapsto c(t)$ is a curve in $M$ and $t\mapsto w(t) \in T_{c(t)}M$ a vector field along $c$, we denote by $c_t:=dc/dt=c'$ the derivative of $c$ with respect to $t$ and by $\nabla_tw := \nabla_{c_t}w$, $\nabla^2_tw := \nabla_{c_t}\nabla_{c_t}w$ the first and second order covariant derivatives of $w$ along $c$. Parallel transport of a tangent vector $u \in T_{c(t_1)}M$ from $c(t_1)$ to $c(t_2)$ along $c$ is denoted by $P_c^{t_1,t_2}(u)$, or when there is no ambiguity on the choice of the curve $c$,  $u^{t_1,t_2}$, or even $u^\parallel$ to lighten notations in some cases. We associate to each curve $c$ its renormalized speed vector field $v := c' / |c'|$, and to each vector field $t \mapsto w(t)$ along $c$, its tangential and normal components $w^T:=\langle w,v\rangle v$ and $w^N:=w-w^T$. Finally, for all $x\in M$ we denote by $\exp_x:T_xM \rightarrow M$ the exponential map on $M$ and by $\log_x:M \rightarrow T_xM$ its inverse map.

\subsection{The space of smooth parameterized curves}

\subsubsection{The Riemannian structure}

We represent open oriented curves in $M$ by smooth immersions, i.e. smooth curves with velocity that doesn't vanish. The set $\mathcal M$ of smooth immersions in $M$ is an open submanifold of the Fr\'echet manifold $C^\infty([0,1],M)$ \cite{michor16} and its tangent space at a point $c$ is the set of infinitesimal deformations of $c$, which can be seen as vector fields along the curve $c$ in $M$
\begin{align*}
\mathcal{M}\!=\!
\{ c\in C^\infty([0,1],M)|c'(t) \neq 0, \forall t\in [0,1] \},\\
T_c\mathcal M\! =\! \{ w \in C^\infty([0,1],TM)| w(t) \in T_{c(t)}M, \forall t\in [0,1] \}.
\end{align*}
Reparametrizations are represented by increasing diffeomorphisms $\varphi : [0,1] \rightarrow [0,1]$ (so that they preserve the end points of the curves), and their set is denoted by $\text{Diff}^+([0,1])$. We adopt the so-called square root velocity (SRV) representation, i.e. we represent each curve $c\in \mathcal M$ by the pair formed by its starting point $x$ and its speed vector field renormalized by the square root of its norm, via the bijection $\mathcal M \rightarrow M\times T\mathcal M$
\begin{equation*}
c \mapsto \left( x:=c(0),\,\, q := \frac{c'}{\sqrt{|c'|}}\right).
\end{equation*}
The inverse of this function is simply given by $M\times T\mathcal M \ni (x,q) \mapsto \pi_{\mathcal M}(q)\in \mathcal M$, if $\pi_{\mathcal M}$ is the canonical projection $T\mathcal M \rightarrow \mathcal M$. The renormalization of the speed vector field in $q$ allows us to define a reparameterization invariant metric, as we will see shortly. For any tangent vector $w\in T_c\mathcal M$, consider a path of curves $s\mapsto c^w(s)\in \mathcal M$ such that $c^w(0)=c$ and $c^w_s(0):=\partial c^w/\partial s(0) =w$. We denote by $q^w:=c^w_t/|c^w_t|^{1/2}$ the square root velocity representation of $c^w$. With these notations, we equip $\mathcal M$ with a Riemannian metric $G$, defined by
\begin{equation}
\label{contmetric}
G_c(w,w) = |w(0)|^2 + \int_0^1 |\nabla_sq^w(0,t)|^2 \,\mathrm dt.
\end{equation}
This definition does not depend on the choice of $c^w$ and we can reformulate this scalar product in terms of (covariant) derivatives of $w$. Indeed, note that $\nabla_sq^w(0,t)=\nabla_{c^w_s(0,t)}(c^w_t/|c^w_t|^{\frac{1}{2}})=|c^w_t(0,t)|^{-\frac{1}{2}}(\nabla_{c^w_s(0,t)}c^w_t-1/2(\nabla_{c^w_s(0,t)}c^w_t)^T)$, which gives after inverting the derivatives according to $s$ and $t$,
\begin{equation*}
\nabla_sq^w(0,t)=|c'|^{-1}({\nabla_tw}^N+\tfrac{1}{2}{\nabla_tw}^T).
\end{equation*}
The scalar product can then be rewritten
\begin{align}
G_c&(w,w) = | w(0)|^2 + \int_0^1 \left(|\nabla_t w^N|^2+ \tfrac{1}{4} |\nabla_t w^T|^2 \right)\frac{\mathrm dt}{ |c'|},\nonumber\\
G_c&(w,w) = | w(0)|^2+ \int \left(|\nabla_\ell w^N|^2 + \tfrac{1}{4} |\nabla_\ell w^T|^2 \right) \mathrm d\ell,\label{contmetricbis}
\end{align}
where $\mathrm d\ell = |c'(t)|\mathrm dt$ and $\nabla_\ell =\frac{1}{|c'(t)|}\nabla_t$ respectively denote integration and covariant derivation according to arc length. This metric belongs to the class of so-called \emph{elastic metrics} parameterized by any $a,b\in \mathbb R$, which can be defined for manifold-valued curves as 
\begin{equation*}
G^{a,b}_c(w,w)=| w(0)|^2+ \int a^2\left(|\nabla_\ell w^N|^2 + b^2 |\nabla_\ell w^T|^2 \right) \mathrm d\ell.
\end{equation*}
With formulation \eqref{contmetricbis} it is clear that $G=G^{1,\frac{1}{2}}$ is invariant under the action of reparameterizing the curve and its tangent vectors by any increasing diffeomorphism $\varphi \in\text{Diff}^+([0,1])$,
\begin{equation}
\label{equivariance}
G_{c\circ \varphi}(w\circ \varphi, z\circ \varphi) = G_c(w,z), \quad \forall w,z\in T_c\mathcal M.
\end{equation}
This \emph{reparameterization invariance} property will allow us to induce a Riemmannian structure on the quotient space as we will see in Section \ref{sec:quotient}.

\subsubsection{Geodesics between parameterized curves}
\label{subsubsec:geod}

Two curves $c_0, c_1 \in \mathcal M$ can be compared using the geodesic distance induced by $G$, i.e. by computing the length of the shortest path of curves $[0,1] \ni s \mapsto c(s) \in \mathcal M$ from $c_0$ to $c_1$
\begin{equation}
\label{dist}
d_G(c_0,c_1) = \inf \left\{ L(c)
: \,\,c(0)=c_0, c(1)=c_1 \right\},
\end{equation}
where the length of a path $c$ can be written in terms of its SRV representation $(x,q) : [0,1] \rightarrow M\times T\mathcal M$
\begin{equation}
\label{srv}
s \mapsto \left(x(s):=c(s,0), \,q(s,\cdot):=\frac{c_t(s,\cdot)}{|c_t(s,\cdot)|^{1/2}}\right),
\end{equation}
as
\begin{equation*}
L(c)=\int_0^1 \sqrt{ |x_s(s)|^2 +\int_0^1 |\nabla_sq(s,t)|^2 \mathrm dt } \,\, \mathrm ds.
\end{equation*}
Note that here - and in all that follows - we indifferently use the notations $c(s,t)=c(s)(t)$, $q(s,t)=q(s)(t)$ for all $s,t\in[0,1]$. 
Now we recall a result shown in \cite{moi17}, which characterizes the geodesic paths of $\mathcal M$, i.e. those which achieve the infimum in \eqref{dist}, by searching for the critical points of the energy functional $E:\mathcal M\rightarrow \mathbb R_+$,
\begin{equation}
\label{contenergy}
E(c)=\int_0^1 \left( |x'(s)|^2 +\int_0^1 |\nabla_sq(s,t)|^2 \mathrm dt \right) \,\, \mathrm ds.
\end{equation}
\begin{proposition}[Geodesic equations]
A geodesic path $[0,1] \ni s \mapsto c(s) \in \mathcal M$ for $G$, or more specifically its SRV representative $s\mapsto (x(s),q(s))$ \eqref{srv}, verifies the equations
\begin{equation}
\begin{aligned}
\label{contgeodeq}
\nabla_sx_s(s) + r(s,0) =&  0,  \\
\nabla_s^2 q(s,t) + |q(s,t)| \left( r(s,t) + r(s,t)^T \right) =& 0, 
\end{aligned}
\end{equation}
for all $s,t\in[0,1]$, where the curvature term $r(s,t)$ integrates the vector field $\mathcal R(q,\nabla_sq)c_s$ parallel transported back to $t$ along $c(s,\cdot)$,
\begin{equation*}
r(s,t) = \int_t^1 \mathcal R(q,\nabla_sq)c_s(s,\tau)^{\tau,t} \mathrm d\tau, \quad t\in[0,1].
\end{equation*} 
\end{proposition}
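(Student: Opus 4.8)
The plan is to read \eqref{contgeodeq} off as the Euler--Lagrange equations of the energy \eqref{contenergy}, since geodesics are precisely the critical points of $E$ among paths with fixed endpoints $c_0,c_1$. So I take a smooth proper variation $(s,u)\mapsto a(s,u)\in\mathcal M$ with $a(\cdot,0)=c$, $a(0,\cdot)\equiv c_0$, $a(1,\cdot)\equiv c_1$, and set $w:=\partial_u a|_{u=0}$, which for each $s$ is a vector field along $c(s,\cdot)$ vanishing at $s=0,1$. Writing $\nabla_s,\nabla_t,\nabla_u$ for the covariant derivatives along the three coordinate directions of $a$, and $x(s,u):=a(s,u)(0)$, $q:=a_t/|a_t|^{1/2}$ for the SRV data of the variation, differentiation under the integral sign gives $\tfrac12\frac{d}{du}\big|_{u=0}E(a(\cdot,u)) = \int_0^1\langle\nabla_u x_s,x_s\rangle\,ds + \int_0^1\!\!\int_0^1\langle\nabla_u\nabla_s q,\nabla_s q\rangle\,dt\,ds$ (all quantities on the right evaluated at $u=0$). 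The rest of the argument moves the $u$-derivative off $x_s$ and $\nabla_s q$ and onto $w$.

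For the first term I use that $\nabla$ is torsion-free, $\nabla_u x_s=\nabla_s x_u$, and integrate by parts in $s$; the boundary terms vanish since $x_u(s)=w(s,0)=0$ at $s=0,1$, leaving $-\int_0^1\langle w(s,0),\nabla_s x_s\rangle\,ds$. For the second term I first commute the covariant derivatives with the curvature tensor, $\nabla_u\nabla_s q=\nabla_s\nabla_u q+\mathcal R(w,c_s)q$, integrate by parts in $s$ on the $\nabla_s\nabla_u q$ piece (boundary terms vanish again, now because $q$, hence $\nabla_u q$, is fixed at $s=0,1$), and use the pair-symmetry together with the last-slot antisymmetry of $\mathcal R$ to rewrite $\langle\mathcal R(w,c_s)q,\nabla_s q\rangle=-\langle w,\mathcal R(q,\nabla_s q)c_s\rangle$. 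At this point $\tfrac12\frac{d}{du}\big|_{u=0}E$ equals $-\int_0^1\langle w(s,0),\nabla_s x_s\rangle\,ds-\int_0^1\!\!\int_0^1\langle\nabla_u q,\nabla_s^2 q\rangle\,dt\,ds-\int_0^1\!\!\int_0^1\langle w,\mathcal R(q,\nabla_s q)c_s\rangle\,dt\,ds$.

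It remains to express the last two integrals through $w$ and to recognise the terms of \eqref{contgeodeq}. In the $\nabla_u q$ integral I substitute the identity derived just before the statement (with $s$ replaced by $u$), $\nabla_u q=|c_t|^{-1/2}\big((\nabla_t w)^N+\tfrac12(\nabla_t w)^T\big)$, and use orthogonality of the normal and tangential parts to transfer the weight $|c_t|^{-1/2}$ and the projections onto $\nabla_s^2 q$, getting $-\int_0^1\!\!\int_0^1\big\langle\nabla_t w,\,|c_t|^{-1/2}\big((\nabla_s^2 q)^N+\tfrac12(\nabla_s^2 q)^T\big)\big\rangle\,dt\,ds$. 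In the curvature integral I replace $w(s,t)$ by its parallel-transport antiderivative along $c(s,\cdot)$, $w(s,t)=P_{c(s,\cdot)}^{0,t}\big(w(s,0)\big)+\int_0^t P_{c(s,\cdot)}^{\tau,t}\big(\nabla_\tau w(s,\tau)\big)\,d\tau$, and apply Fubini: the $w(s,0)$-part becomes $-\int_0^1\langle w(s,0),r(s,0)\rangle\,ds$, and the $\nabla_\tau w$-part, after swapping the order of the iterated $t$-integral so that $t$ now runs from $\tau$ to $1$, becomes $-\int_0^1\!\!\int_0^1\langle\nabla_t w(s,t),r(s,t)\rangle\,dt\,ds$, with $r$ exactly as in the statement. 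Collecting the coefficients of $w(s,0)$ and of $\nabla_t w$ — both arbitrary for $s\in(0,1)$ — gives $\nabla_s x_s+r(s,0)=0$ and $|c_t|^{-1/2}\big((\nabla_s^2 q)^N+\tfrac12(\nabla_s^2 q)^T\big)+r=0$; multiplying the latter by $|c_t|^{1/2}=|q|$, splitting it into normal and tangential parts and recombining with $r^N+2r^T=r+r^T$ yields the second equation $\nabla_s^2 q+|q|(r+r^T)=0$.

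The two integrations by parts in $s$ and the curvature-tensor symmetries are routine. The delicate part, I expect, is the $t$-direction bookkeeping: it is cleaner \emph{not} to integrate by parts in $t$ — which would produce unwanted boundary terms at $t=1$ and a derivative of $|c_t|^{-1/2}$ — but instead to rewrite $w$ through its parallel-transport antiderivative and let Fubini generate the ``integrate from $t$ to $1$ and transport back to $t$'' structure of $r(s,t)$ by itself; and the final step of matching the weight $|c_t|^{-1/2}$ and the half-tangential projection that are hidden inside $\nabla_u q$ against $\nabla_s^2 q$, so that exactly the factor $|q|$ and the combination $r+r^T$ survive, is the other place where a sign or a factor is easy to lose. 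All exchanges of derivatives with integrals, and Fubini, are justified by smoothness of $a$ on $[0,1]^2$.
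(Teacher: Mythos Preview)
Your argument is correct. The paper itself does not prove this proposition; it simply recalls it from \cite{moi17}. Your derivation is precisely the continuous counterpart of the computation the paper \emph{does} carry out in Appendix~B for the discrete geodesic equations (Proposition~\ref{prop:geodeq}): there the variation field $\partial_a x_k$ is expressed through $\partial_a x_0$ and the $\nabla_a q_\ell$ via the recursion~\eqref{rec2}, which is exactly the discrete analogue of your parallel-transport antiderivative $w(s,t)=P^{0,t}w(s,0)+\int_0^t P^{\tau,t}\nabla_\tau w\,d\tau$ combined with Fubini. The self-adjointness of the map $w\mapsto w^N+\tfrac12 w^T$ that you use to shift the projection onto $\nabla_s^2 q$ corresponds to the symmetry $\langle f_k(w),z\rangle=\langle w,f_k(z)\rangle$, $\langle g_k(w),z\rangle=\langle w,g_k(z)\rangle$ invoked there. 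So while the paper defers the proof to \cite{moi17}, your proof matches in spirit (and in detail) the method used for the discrete version.
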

\begin{remark}
In the flat case $M=\mathbb R^d$, the curvature term $r$ vanishes and we obtain $\nabla_sx_s(s)=0$, $\nabla_s^2q(s,t)=0$ for all $s$ and $t$. This means that the geodesic between two curves $(x_0,q_0)$ and $(x_1,q_1)$ in the SRV coordinates is composed of a straight line $s\mapsto x(s)$ and an $L^2$-geodesic $s \mapsto q(s,\cdot)$. In other words, the geodesic links the starting points with a straight line and linearly interpolates between the renormalized speeds. Notice that if this linear interpolation goes through zero, the geodesic does not exist in $\text{Imm}([0,1],M)$. This can be avoided by considering a larger space of curves, such as the set of absolutely continuous curves \cite{lahiri15}.
\end{remark}
\begin{figure}
\centering
\includegraphics[width=21em]{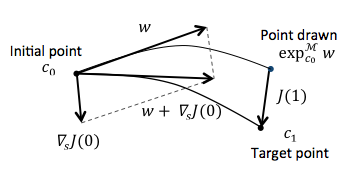}
\caption{Schematic illustration of geodesic shooting.}
\label{fig:schemgeodshoot}
\end{figure}
A possibility to construct the geodesics of $\mathcal M$ is to use geodesic shooting. By solving the geodesic equations \eqref{contgeodeq} we can construct the geodesic path starting from a given curve $c_0$ at a given speed $w\in T_{c_0}\mathcal M$ - this is the exponential map on $\mathcal M$. Given two curves $c_0, c_1$, geodesic shooting allows us to iteratively find the appropriate initial speed $w$ which will make the geodesic land on $c_1$. The idea is to "shoot" from $c_0$ in a certain direction using the exponential map, estimate the gap between the end point of the obtained geodesic and the target point $c_1$, "bring back" this information at $c_0$ using a Jacobi field and finally use this information to correct the shooting direction. 

Jacobi fields are vector fields that describe the way that geodesics spread out in the Riemannian manifold: for any geodesic $s\mapsto c(s)$ in $\mathcal M$ and Jacobi field $s\mapsto J(s)$ along $c$, there exists a family of geodesics $(-\delta, \delta) \ni a \mapsto c(a,\cdot)$ such that for all $s$, $c(0,s)=c(s)$ and 
\begin{equation*}
J(s) = \left.\frac{\partial}{\partial a}\right|_{a=0} c(a,s).
\end{equation*}
At a given step of the geodesic shooting algorithm, we consider the Jacobi field that measures the difference between the geodesic obtained by shooting and the desired geodesic between $c_0$ and $c_1$ : it has initial value $J(0)=0$ since both have same starting point $c_0$, and its end value can be estimated by $J(1)=\log^{L2}_{c_0}c_1$, where $\log^{L2}_c$ denotes the inverse of the exponential map for the $L^2$-metric on $\mathcal M$, simply given by $\log_{c_0}^{L2}(c_1)(t)=\log_{c_0(t)}(c_1(t))$ for $t\in[0,1]$. The shooting direction can then be corrected by the derivative $\nabla_sJ(0)$ of the Jacobi field at the origin, as shown in Figure \ref{fig:schemgeodshoot}. For more details, we refer the reader to  \cite{moi17}. 
\begin{algo}[Geodesic shooting] \label{alg:geodshoot}
\leavevmode\par \noindent
Input: $c_0, c_1\in \mathcal M$.\\
Initialization: $w = \log_{c_0}^{L2}(c_1)$.\\
Repeat until convergence :
\begin{enumerate}
\item compute the geodesic $s\mapsto c(s)$ starting from $c_0$ at speed $w$ by solving the geodesic equations \eqref{contgeodeq},
\item evaluate the difference $j := \log_{c(1)}^{L^2}(c_1)$ between the target curve $c_1$ and the extremity $c(1)$ of the obtained geodesic,
\item compute the initial derivative $\nabla_sJ(0)$ of the Jacobi field $s\mapsto J(s)$ along the path of curves $c$ verifying $J(0)=0$ and $J(1)=j$,
\item correct the shooting direction $w = w + \nabla_sJ(0)$.
\end{enumerate}
Output: geodesic $c$.
\end{algo}
This algorithm requires the characterization of the Jacobi fields for $G$ on $\mathcal M$, and a way to deduce the initial derivative $\nabla_s J(0)$ of a Jacobi field from its initial and final values $J(0)$, $J(1)$. Concerning these two points, we refer the reader to \cite{moi17} : the Jacobi fields of $\mathcal M$ are shown to be solutions of a linear PDE, which can be solved to obtain the final value $J(1)$  of a Jacobi field $J$ along a path of curves $c$ knowing its initial conditions $J(0)$ and $\nabla_s J(0)$. If we consider only Jacobi fields with initial value $J(0)=0$, then the function $\varphi : T_{c(0)}\mathcal M \rightarrow T_{c(1)}\mathcal M$, $\nabla_s J(0) \mapsto J(1)$ is a linear bijection between two vector spaces and its inverse map can be computed by considering the image of a basis of $T_{c(0)}\mathcal M$.
The equations characterizing the Jacobi fields in the discrete setting will be given in Section \ref{sec:dismodel}.

\subsection{The space of unparameterized curves}
\label{sec:quotient}

\subsubsection{The quotient structure}

In order to compare curves regardless of parameterization, we consider the quotient $\mathcal S = \mathcal M/\text{Diff}^+([0,1])$ of the space of curves by the diffeomorphisms group. This quotient is not a manifold, as it has singularities, i.e. points with non trivial isotropy group \cite{michor16}. That is why from now on we assume that $\mathcal M$ denotes the set of free immersions, i.e. immersions on which the diffeomorphism group acts freely. That way, the space of curves $\mathcal M$ and the quotient shape space $\mathcal S$ are respectively the total and base spaces of a principal bundle, the fibers of which are the sets of all the curves that are identical modulo reparameterization, i.e. that project on the same "shape". We denote by $\pi : \mathcal M\rightarrow S$ the projection of the fiber bundle and by $\bar c:=\pi(c)\in \mathcal S$ the shape of a curve $c\in \mathcal M$. The tangent bundle can then be decomposed
\begin{equation*}
T\mathcal M = \text{Ver} \oplus \text{Hor}
\end{equation*}
into a vertical subspace consisting of all vectors tangent to the fibers of $\mathcal{M}$ over $\mathcal{S}$, that is, those which have an action of reparameterizing the curve without changing its shape
\begin{align*}
\text{Ver}_c &= \ker T_c\pi= \{ m v, \,\,m \in \mathcal C\},\\
\mathcal C &= \{ m \in C^\infty([0,1], \mathbb R), \, m(0)=m(1)=0\},
\end{align*}
and a horizontal subspace defined as the orthogonal of the vertical subspace according to $G$
\begin{equation*}
\text{Hor}_c = \left(\text{Ver}_c \right)^{\perp_G} \!= \!\{ h\in T_c\mathcal M_f : G_c(h,mv)\!=0 \, \forall m\in \mathcal C\}.
\end{equation*}
If $G$ is constant along the fibers, i.e. verifies property \eqref{equivariance}, then there exists a Riemannian metric $\bar G$ on the shape space $\mathcal{S}$ such that $\pi$ is a Riemannian submersion from $(\mathcal M,G)$ to $\mathcal (S,\bar G)$,
\begin{equation*}
G_c(w^{hor},z^{hor}) = \bar G_{\pi( c)}\left( T_c\pi(w), T_c\pi(z) \right),
\end{equation*}
where $w^{hor}$ and $z^{hor}$ are the horizontal parts of tangent vectors $w$ and $z$, as well as the horizontal lifts of $T_c\pi(w)$ and $T_c\pi(z)$, respectively. This expression defines $\bar G$ in the sense that it does not depend on the choice of the representatives $c$, $w$ and $z$ (\cite{michor08}, \S 29.21). If a geodesic for $G$ has a horizontal initial speed, then its speed vector stays horizontal at all times - we say it is a horizontal geodesic - and projects on a geodesic of the shape space for $\bar G$ (\cite{michor08}, \S 26.12). To compute the distance between two shapes $\overline{c_0}$ and $\overline{c_1}$ in the quotient space we choose a representative $c_0$ of $\overline{c_0}$ and compute the distance (in $\mathcal M$) to the closest representative of $\overline{c_1}$, as shown schematically in Figure \ref{fig:fibre},
\begin{equation*}
\bar d\left( \overline{c_0} , \overline{c_1} \right) = \inf \left\{\, d\left(c_0, c_1\circ \varphi \right) \, | \, \, \varphi \in \text{Diff}^+([0,1]) \, \right\}.
\end{equation*}
\begin{figure}
\centering
\includegraphics[width=20em]{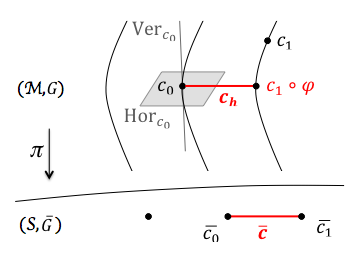}
\caption{Schematic representation of the shape bundle.}
\label{fig:fibre}
\end{figure}
By definition, the distance in the quotient space allows us to compare curves regardless of parameterization
\begin{equation*}
\bar d\left(\,\overline{c_0\circ\varphi},\overline{c_1\circ\psi}\,\right)=\bar d\left(\overline{c_0},\overline{c_1}\right),  \quad \forall \varphi,\psi\in\text{Diff}^+([0,1]).
\end{equation*}
We now characterize the horizontal subspace for any elastic metric $G^{a,b}$ and in particular for our metric $G^{1,\frac{1}{2}}$, and give the decomposition of a tangent vector.
\begin{proposition}[Horizontal part of a vector]
Let $c\in \mathcal M$ be a smooth immersion. Then $h \in T_c\mathcal M$ is horizontal for the elastic metric $G^{a,b}$ if and only if
\begin{align*}
&\left((a/b)^2-1\right)\langle \nabla_th, \nabla_tv\rangle - \langle \nabla_t^2h, v \rangle \\
&\hspace{9em}+ |c'|^{-1} \langle \nabla_tc', v \rangle\langle \nabla_th, v\rangle=0.
\end{align*}
In particular, for $a=2b=1$ we obtain
\begin{align*}
&\text{Hor}_c = \{ h \in T_c\mathcal M : \forall t\in[0,1], \,3\langle \nabla_th, \nabla_tv\rangle \\
&\hspace{5em}- \langle \nabla_t^2h, v \rangle + |c'|^{-1} \langle \nabla_tc', v \rangle\langle \nabla_th, v\rangle=0\}.
\end{align*}
Any tangent vector $w\in T_c\mathcal M$ can be decomposed in horizontal and vertical components $w = w^{hor} + w^{ver}$ given by $\,\,w^{ver} = mv,\,\, w^{hor} = w - mv$, where the real function $m\in \mathcal C$ verifies the ordinary differential equation
\begin{equation}
\label{mtt}
\begin{aligned}
&m'' - \langle \nabla_tc'/|c'|, v \rangle m' - 4 |\nabla_tv|^2 m\\
&\hspace{1em}= \langle \nabla_t^2w, v \rangle - 3\langle \nabla_tw, \nabla_tv\rangle -\langle \nabla_tc'/|c'|, v \rangle\langle \nabla_tw, v\rangle.
\end{aligned}
\end{equation}
\end{proposition}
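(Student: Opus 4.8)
The plan is to unwind the definition $\text{Hor}_c=(\text{Ver}_c)^{\perp_G}$ using the already-identified vertical space $\text{Ver}_c=\{mv:m\in\mathcal C\}$: a vector $h\in T_c\mathcal M$ is horizontal precisely when $G^{a,b}_c(h,mv)=0$ for every $m\in\mathcal C$. First I would polarize the quadratic form in the $G^{a,b}$-version of \eqref{contmetricbis} and observe that the endpoint contribution $\langle h(0),m(0)v(0)\rangle$ vanishes since $m(0)=0$, so only the integral term survives. Then I would split $\nabla_t(mv)=m'v+m\nabla_tv$ into its components relative to $v$: since $|v|\equiv 1$ forces $\langle v,\nabla_tv\rangle=0$, the tangential part is $m'v$ and the normal part is $m\nabla_tv$. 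Substituting, and using that $(\nabla_th)^T$ is parallel to $v$ (hence pairs to zero with $\nabla_tv$) while $(\nabla_th)^N$ pairs to zero with $v$, the scalar product collapses to
\begin{equation*}
G^{a,b}_c(h,mv)=\int_0^1\Bigl(\alpha\, m\,\langle\nabla_th,\nabla_tv\rangle+\beta\, m'\,\langle\nabla_th,v\rangle\Bigr)\frac{\mathrm dt}{|c'|},
\end{equation*}
with explicit constants $\alpha,\beta$ read off from $a$ and $b$.

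The next step is to eliminate $m'$ by integration by parts. Since $m(0)=m(1)=0$ the boundary term disappears, leaving $G^{a,b}_c(h,mv)=\int_0^1 m(t)\,\Phi_h(t)\,|c'|^{-1}\,\mathrm dt$, where $\Phi_h$ comes from differentiating $\langle\nabla_th,v\rangle/|c'|$; here I would use $\tfrac{\mathrm d}{\mathrm dt}\langle\nabla_th,v\rangle=\langle\nabla_t^2h,v\rangle+\langle\nabla_th,\nabla_tv\rangle$ together with $\tfrac{\mathrm d}{\mathrm dt}|c'|=\langle\nabla_tc',v\rangle$. Because $m$ ranges over \emph{all} smooth functions vanishing at the endpoints and $|c'|^{-1}>0$, the fundamental lemma of the calculus of variations forces $\Phi_h\equiv0$; dividing this identity by the (nonzero) coefficient of $\langle\nabla_t^2h,v\rangle$ gives exactly the stated horizontality condition, and setting $a=2b=1$ specializes it to the displayed description of $\text{Hor}_c$.

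For the decomposition I would write $h=w-mv$ with $m$ to be found, and impose $\Phi_h=0$. Since $h\mapsto\Phi_h$ is linear, $\Phi_{w-mv}=\Phi_w-\Phi_{mv}$, and $\Phi_{mv}$ is computed from the pointwise identities $\langle\nabla_t(mv),v\rangle=m'$, $\langle\nabla_t(mv),\nabla_tv\rangle=m\,|\nabla_tv|^2$ and $\langle\nabla_t^2(mv),v\rangle=m''-m\,|\nabla_tv|^2$, the last one following from $\langle\nabla_t^2v,v\rangle=-|\nabla_tv|^2$, obtained by differentiating $\langle\nabla_tv,v\rangle=0$. Carrying out this substitution turns $\Phi_{w-mv}=0$ into precisely the linear second-order ODE \eqref{mtt} for $m$.

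The point that needs genuine care — and what I expect to be the main obstacle — is that \eqref{mtt} is a two-point boundary value problem ($m(0)=m(1)=0$), so existence and uniqueness of $m$ are not automatic. I would settle this by the Fredholm alternative: the homogeneous problem $m''-\langle\nabla_tc'/|c'|,v\rangle m'-4|\nabla_tv|^2 m=0$ with $m(0)=m(1)=0$ admits only $m\equiv0$, because multiplying by the integrating factor $\mu=\exp\bigl(-\int_0^{\cdot}\langle\nabla_tc'/|c'|,v\rangle\bigr)>0$ yields $(\mu m')'=4\mu|\nabla_tv|^2 m$, and pairing with $m$ and integrating gives $-\int_0^1\mu(m')^2=4\int_0^1\mu|\nabla_tv|^2 m^2$, an identity whose two sides have opposite signs unless $m\equiv0$. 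Triviality of the homogeneous problem then yields a unique solution $m$ of \eqref{mtt}; smoothness of $c$ and $w$ makes the coefficients and right-hand side smooth, hence $m$ smooth, and since it vanishes at $0$ and $1$ it lies in $\mathcal C$, so $w^{ver}=mv\in\text{Ver}_c$ and $w^{hor}=w-mv\in\text{Hor}_c$ is the desired decomposition, unique by the same argument.
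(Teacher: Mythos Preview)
Your argument follows the paper's exactly: compute $G^{a,b}_c(h,mv)$ by splitting $\nabla_t(mv)=m'v+m\nabla_tv$ into its tangential and normal parts, integrate by parts to remove $m'$, invoke the fundamental lemma of the calculus of variations, and then substitute $h=w-mv$ into the horizontality condition using $\langle\nabla_tv,v\rangle=0$ and $\langle\nabla_t^2v,v\rangle=-|\nabla_tv|^2$ to obtain \eqref{mtt}. The one substantive addition is your treatment of the two-point boundary value problem: the paper derives the ODE and stops, whereas your integrating-factor energy estimate shows the homogeneous Dirichlet problem has only the trivial solution, so the Fredholm alternative yields a unique $m\in\mathcal C$ --- this closes a point the paper leaves implicit.
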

\begin{proof}
Let $h \in T_c\mathcal M$ be a tangent vector. It is horizontal if and only if it is orthogonal to any vertical vector, that is any vector of the form $mv$ with $m\in C^\infty([0,1],\mathbb R)$ such that $m(0)=m(1)=0$. We have $\nabla_t(mv) = m'v+m\nabla_tv \,$ and since $\langle \nabla_tv, v\rangle =0$ we get $\nabla_t(mv)^N = m \nabla_tv$ and $\nabla_t(mv)^T = m'v$. The scalar product can then be written
\begin{align*}
&G^{a,b}_c(h,mv) = \langle h(0), m(0)v(0) \rangle \\
&+ \int_0^1 (a^2 \langle \nabla_th^N, \nabla_t(mv)^N \rangle + b^2 \langle\nabla_th^T, \nabla_t(mv)^T \rangle) \frac{\mathrm dt}{|c'|}\\
&=\int_0^1 (a^2 m \langle \nabla_th, \nabla_tv \rangle + b^2 m'\langle\nabla_th, v \rangle)  \frac{\mathrm dt}{|c'|}\\
&=\int_0^1 \!a^2 m \langle \nabla_th, \nabla_tv \rangle  \frac{\mathrm dt}{|c'|} - \!\int_0^1\!b^2 m \frac{d}{dt}\big(\langle \nabla_th, v\rangle |c'|^{-1}\big) \mathrm dt\\
&=\int_0^1 m/|c'|\Big( (a^2-b^2)\langle \nabla_th, \nabla_tv\rangle - b^2 \langle \nabla_t^2h, v \rangle\\
&\hspace{10.5em}+ b^2 \langle \nabla_tc', v \rangle\langle \nabla_th, v\rangle|c'|^{-1} \Big) \mathrm dt,
\end{align*}
by integration by parts. The vector $h$ is horizontal if and only if $G_c(h,mv)=0$ for all such $m$, and so multiplying by $|c'|/b^2$ gives the desired equation. Now consider a tangent vector $w$ and a real function $m : [0,1] \rightarrow \mathbb R$ such that $m(0)=m(1)=0$. Then according to the above, $w-mv$ is horizontal if and only if it verifies 
\begin{align*}
&3\langle \nabla_t(w-mv), \nabla_tv\rangle - \langle \nabla_t^2(w-mv), v \rangle \\
&\hspace{6.5em}+ |c'|^{-1} \langle \nabla_tc', v \rangle\langle \nabla_t(w-mv), v\rangle=0,
\end{align*}
i.e., since $\langle \nabla_tv,v\rangle=0$, $\langle \nabla_t^2v,v\rangle=-|\nabla_tv|^2$ and $\nabla_t^2(mv)= m'' v+ 2m'\nabla_tv +m\nabla_t^2v$, if
\begin{align*}
&3\langle \nabla_tw, \nabla_tv\rangle - 3 |\nabla_tv|^2 m - \langle \nabla_t^2w, v \rangle + m''\\
&\hspace{2em}- m |\nabla_tv|^2 + |c'|^{-1} \langle \nabla_tc', v \rangle(\langle \nabla_tw, v\rangle - m')=0,
\end{align*}
which is what we wanted.
\end{proof}

\subsubsection{Computing geodesics in the shape space} Recall that the geodesic path $s \mapsto \bar c(s)$ between the shapes of two curves $c_0$ and $c_1$ is the projection of the horizontal geodesic - if it exists - $s \mapsto c_h(s)$ linking $c_0$ to the fiber of $c_1$ in $\mathcal M$, i.e. such that $c_h(0)=c_0$, $c_h(1) \in\pi^{-1}(\overline{c_1})$ and $\partial_sc_h(s) \in \text{Hor}_{c_h(s)}$ for all $s\in[0,1]$,
\begin{equation*}
\bar c = \pi(c_h).
\end{equation*}
The end point of $c_h$ then gives the optimal reparameterization $c_1\circ \varphi$ of the target curve $c_1$ with respect to the initial curve $c_0$, i.e. such that
\begin{equation*}
\bar d(\overline{c_0},\overline{c_1}) = d(c_0,c_1\circ \varphi).
\end{equation*}
The parameterized curves $(c_0,c_1\circ \varphi)$ define what we call an \emph{optimal matching} between the shapes $\overline{c_0}$ and $\overline{c_1}$, in the sense that each point $c_0(t)$ on $\overline{c_0}$ is matched with the point $c_1(\varphi(t))$ on $\overline{c_1}$. Here we propose a method to approach the horizontal geodesic $c_h$, and thereby the corresponding optimal matching. To that end we decompose any path of curves $s\mapsto c(s)$ in $\mathcal M$ into a horizontal path composed with a path of reparameterizations, $c(s) = c^{hor}(s) \circ \varphi(s)$, or equivalently
\begin{equation}
\label{def}
c(s,t) = c^{hor}(s,\varphi(s,t)) \quad \forall s,t\in[0,1],
\end{equation}
where the path $[0,1]\ni s\mapsto c^{hor}(s) \in\mathcal M$ is such that $c^{hor}_s(s)\in \text{Hor}_{c^{hor}(s)}$ for all $s\in[0,1]$, and $[0,1] \ni s \mapsto \varphi(s) \in \text{Diff}^+([0,1])$ is a path of increasing diffeomorphisms. The horizontal and vertical parts of the speed vector of $c$ can be expressed in terms of this decomposition. Indeed, by taking the derivative of \eqref{def} with respect to $s$ and $t$ we obtain
\begin{subequations}
\begin{align}
c_s(s) &= c^{hor}_s(s) \circ \varphi(s) + \varphi_s(s) \cdot c^{hor}_t(s)\circ \varphi(s), \label{cs}\\
c_t(s) &= \varphi_t(s) \cdot c^{hor}_t(s) \circ \varphi(s), \label{ct}
\end{align}
\end{subequations}
and so with $v^{hor}(s,t) := c^{hor}_t(s,t)/|c^{hor}_t(s,t)|$, since $\varphi_t >0$, \eqref{ct} gives
\begin{equation*}
v(s) = v^{hor}(s) \circ \varphi(s).
\end{equation*}
We can see that the first term on the right-hand side of Equation \eqref{cs} is horizontal. Indeed, for any path of real functions $m : [0,1] \rightarrow C^\infty([0,1],\mathbb R)$, $s\mapsto m(s,\cdot)$ such that $m(s,0)=m(s,1)=0$ for all $s$, since $G$ is reparameterization invariant we have
\begin{align*}
&G\left(c^{hor}_s(s) \circ \varphi(s), \,m(s) \cdot v(s) \right)\\
&= G\left(c^{hor}_s(s)\circ \varphi(s), \,m(s) \cdot v^{hor}(s)\circ \varphi(s) \right)\\
&= G\left(c^{hor}_s(s), \,m(s) \circ \varphi(s)^{-1} \cdot v^{hor}(s) \right)\\
&= G\left(c^{hor}_s(s), \, \tilde m(s)\cdot v^{hor}(s)\right)
\end{align*}
with $\tilde m(s) = m(s) \circ \varphi(s)^{-1}$. Since $\tilde m(s,0) = \tilde m(s,1) = 0$ for all $s$, the vector $\tilde m(s)\cdot v^{hor}(s)$ is vertical and its scalar product with the horizontal vector $c^{hor}_s(s)$ vanishes. On the other hand, the second term on the right hand-side of Equation \eqref{cs} is vertical, since it can be written
\begin{equation*}
\varphi_s(s)\cdot c^{hor}_t\circ \varphi(s) = m(s)\cdot v(s),
\end{equation*}
with $m(s)=|c_t(s)|\varphi_s(s)/\varphi_t(s)$ verifying $m(s,0)=m(s,1)=0$ for all $s$. Finally, the vertical and horizontal parts of the speed vector $c_s(s)$ are given by
\begin{subequations}
\begin{align}
c_s(s)^{ver} &= m(s) \cdot v(s) = |c_t(s)|\varphi_s(s)/\varphi_t(s) \cdot v(s), \label{csver}\\
c_s(s)^{hor} &= c_s(s) - m(s)\cdot v(s) = c^{hor}_s(s)\circ \varphi(s).\label{cshor}
\end{align}
\end{subequations}
\begin{definition}
We call $c^{hor}$ the \emph{horizontal part} of the path $c$ with respect to $G$.
\end{definition}
\begin{proposition}
The horizontal part of a path of curves $c$ is at most the same length as $c$  
\begin{equation*} L_G(c^{hor}) \leq L_G(c).
\end{equation*}
\end{proposition}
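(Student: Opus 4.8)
The plan is to exploit the decomposition $c_s = c_s^{hor} + c_s^{ver}$ established just above, together with the reparameterization invariance \eqref{equivariance} of $G$, to compare the integrands defining $L_G(c)$ and $L_G(c^{hor})$ pointwise in $s$. Recall that for a path $c$ with decomposition $c(s,t) = c^{hor}(s,\varphi(s,t))$, we have from \eqref{csver} and \eqref{cshor} an $G$-orthogonal splitting $c_s(s) = c_s(s)^{hor} \oplus c_s(s)^{ver}$, so by the Pythagorean identity in each tangent space $T_{c(s)}\mathcal M$,
\begin{equation*}
G_{c(s)}(c_s(s),c_s(s)) = G_{c(s)}\big(c_s(s)^{hor},c_s(s)^{hor}\big) + G_{c(s)}\big(c_s(s)^{ver},c_s(s)^{ver}\big).
\end{equation*}
Since the second term is nonnegative, $G_{c(s)}(c_s(s),c_s(s)) \geq G_{c(s)}(c_s(s)^{hor},c_s(s)^{hor})$ for every $s$.

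The next step is to identify the right-hand side with the speed of the horizontal path $c^{hor}$ at the corresponding parameter. By \eqref{cshor}, $c_s(s)^{hor} = c^{hor}_s(s)\circ\varphi(s)$, and since $\varphi(s)\in\text{Diff}^+([0,1])$, reparameterization invariance \eqref{equivariance} gives
\begin{equation*}
G_{c(s)}\big(c_s(s)^{hor},c_s(s)^{hor}\big) = G_{c^{hor}(s)\circ\varphi(s)}\big(c^{hor}_s(s)\circ\varphi(s),\,c^{hor}_s(s)\circ\varphi(s)\big) = G_{c^{hor}(s)}\big(c^{hor}_s(s),c^{hor}_s(s)\big).
\end{equation*}
Combining the two displays yields $G_{c^{hor}(s)}(c^{hor}_s(s),c^{hor}_s(s)) \leq G_{c(s)}(c_s(s),c_s(s))$ for all $s\in[0,1]$. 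Taking square roots (both sides nonnegative) and integrating over $s\in[0,1]$ gives
\begin{equation*}
L_G(c^{hor}) = \int_0^1 \sqrt{G_{c^{hor}(s)}(c^{hor}_s(s),c^{hor}_s(s))}\,\mathrm ds \leq \int_0^1 \sqrt{G_{c(s)}(c_s(s),c_s(s))}\,\mathrm ds = L_G(c),
\end{equation*}
which is the claim.

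I expect the only genuine subtlety to be bookkeeping rather than a conceptual obstacle: one must make sure the decomposition \eqref{def} actually exists for the given path $c$ (i.e. that the path of diffeomorphisms $\varphi$ solving $m(s)=|c_t(s)|\varphi_s(s)/\varphi_t(s)$ with $\varphi(s,0)=0$, $\varphi(s,1)=1$ is well defined and smooth, starting from $\varphi(0)=\mathrm{id}$), and that the horizontality of $c^{hor}_s$ is genuinely used only through the orthogonality with the vertical vector $\tilde m(s)\cdot v^{hor}(s)$ — which is exactly the computation carried out in the paragraph preceding the proposition. There is also a minor point worth a sentence: the vertical and horizontal components are orthogonal in $G_{c(s)}$ by definition of the horizontal subspace as $(\text{Ver}_{c(s)})^{\perp_G}$, so the Pythagorean step is immediate and no separate argument is needed. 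Everything else is the pointwise-in-$s$ Pythagorean inequality plus invariance, so the proof is short.
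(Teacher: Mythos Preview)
Your proof is correct and follows essentially the same approach as the paper: both use the $G$-orthogonal decomposition $c_s(s)=c_s(s)^{hor}+c_s(s)^{ver}$ to obtain a pointwise Pythagorean inequality, invoke reparameterization invariance \eqref{equivariance} to identify $G_{c(s)}(c_s(s)^{hor},c_s(s)^{hor})$ with $G_{c^{hor}(s)}(c^{hor}_s(s),c^{hor}_s(s))$, and then integrate in $s$. Your write-up is in fact slightly more explicit about the base points and the use of orthogonality than the paper's short proof.
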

\begin{proof}
Since the metric $G$ is reparameterization invariant, the squared norm of the speed vector of the path $c$ at time $s\in[0,1]$ is given by
\begin{align*}
&\|c_s(s,\cdot)\|_G^2 \\
&= \|c^{hor}_s(s,\varphi(s,\cdot))\|_G^2 + |\varphi_s(s,\cdot)|^2\|c^{hor}_t(s,\varphi(s,\cdot)\|_G^2\\
&=\|c^{hor}_s(s,\cdot)\|_G^2 + |\varphi_s(s,\cdot)|^2 \| c^{hor}_t(s,\cdot)\|_G^2,
\end{align*}
where $\|\cdot\|^2_G:=G(\cdot,\cdot)$. This gives $\|c^{hor}_s(s)\|_G\leq \|c_s(s)\|$ for all $s$ and so $L_G(c^{hor})\leq L_G(c)$.
\end{proof}
Now we will see how the horizontal part of a path of curves can be computed.
\begin{proposition}[Horizontal part of a path]
Let $s\mapsto c(s)$ be a path in $\mathcal M$. Then its horizontal part is given by $c^{hor}(s,t) = c(s,\varphi(s)^{-1}(t))$, where the path of diffeomorphisms $s\mapsto \varphi(s)$ is solution of the partial differential equation
\begin{equation}
\label{phicont}
\varphi_s(s,t) = m(s,t)/|c_t(s,t)|\cdot \varphi_t(s,t),
\end{equation}
and where $m(s) : [0,1] \rightarrow \mathbb R$, $t \mapsto m(s,t)$ is solution for all $s$ of the ordinary differential equation
\begin{align*}
&m_{tt} - \langle \nabla_tc_t/|c_t|, v \rangle m_t - 4 |\nabla_tv|^2 m \\
&\hspace{1em}= \langle \nabla_t^2c_s, v \rangle - 3\langle \nabla_tc_s, \nabla_tv\rangle -\langle \nabla_tc_t/|c_t|, v \rangle\langle \nabla_tc_s, v\rangle.
\end{align*}
\end{proposition}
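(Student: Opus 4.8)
The plan is to read the two equations off the decomposition $c(s,t)=c^{hor}(s,\varphi(s,t))$ of \eqref{def} together with the pointwise splitting from the previous proposition, and then add a short well-posedness remark so that the formula for $c^{hor}$ is meaningful. First, since $\varphi(s)$ is an increasing diffeomorphism of $[0,1]$ for each $s$, inverting \eqref{def} in the $t$ variable gives $c^{hor}(s,t)=c\big(s,\varphi(s)^{-1}(t)\big)$, which is the first claimed identity; the canonical normalization $c^{hor}(0)=c(0)$ used to define \emph{the} horizontal part of the path corresponds to the initial condition $\varphi(0)=\mathrm{id}$.

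Next I would identify the scalar function $m$ appearing in \eqref{phicont}. The defining property of $c^{hor}$ is that $c^{hor}_s(s)\in\text{Hor}_{c^{hor}(s)}$ for all $s$. Differentiating \eqref{def}, the computation \eqref{cs}--\eqref{cshor} shows that the speed splits as $c_s(s)=c^{hor}_s(s)\circ\varphi(s)+m(s)\cdot v(s)$ with $m(s,t)=|c_t(s,t)|\,\varphi_s(s,t)/\varphi_t(s,t)$, and the argument preceding the Definition — which uses the reparameterization invariance \eqref{equivariance} of $G$ together with horizontality of $c^{hor}_s(s)$ at $c^{hor}(s)$, via $\tilde m(s)=m(s)\circ\varphi(s)^{-1}$ — shows the first summand is horizontal at $c(s)$, while the second is visibly vertical since $m(s,0)=m(s,1)=0$. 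By uniqueness of the $G$-orthogonal decomposition $T_{c(s)}\mathcal M=\text{Hor}_{c(s)}\oplus\text{Ver}_{c(s)}$, this is \emph{the} horizontal/vertical splitting of the tangent vector $w:=c_s(s)$ furnished by the previous proposition. Hence $m(s,\cdot)$ is exactly the element of $\mathcal C$ characterized there, i.e.\ it solves \eqref{mtt} with $w=c_s(s)$ and $c'=c_t(s)$; writing $\partial_t$ for the prime, this is precisely the stated ODE. Finally, the relation $m(s,t)=|c_t(s,t)|\,\varphi_s(s,t)/\varphi_t(s,t)$ rearranges into the transport equation \eqref{phicont}.

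For the formula $c^{hor}(s,t)=c(s,\varphi(s)^{-1}(t))$ to be meaningful one should know these equations are solvable in $\text{Diff}^+([0,1])$. For fixed $s$ the equation for $m(s,\cdot)$ is linear of second order on $[0,1]$ with Dirichlet data $m(s,0)=m(s,1)=0$ and nonpositive zeroth-order coefficient $-4|\nabla_tv|^2$, so by the maximum principle the homogeneous problem admits only the trivial solution, whence $m(s,\cdot)$ is uniquely determined by the smooth data $c(s),c_s(s)$ and depends smoothly on $s$. With $m$ known, \eqref{phicont} is a first-order linear transport equation for $t\mapsto\varphi(s,t)$ with $\varphi(0)=\mathrm{id}$, solvable by characteristics on the compact $s$-interval; since $\varphi_s=(m/|c_t|)\,\varphi_t$ is homogeneous of degree one in $\varphi_t$ the sign $\varphi_t>0$ is preserved, and $m(s,0)=m(s,1)=0$ forces $\varphi_s(s,0)=\varphi_s(s,1)=0$, so $\varphi(s)$ remains an increasing diffeomorphism fixing the endpoints.

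The one step I expect to require genuine care is the identification of the two occurrences of $m$: one must invoke both that $\text{Hor}_{c(s)}$ is the \emph{exact} $G$-orthogonal complement of $\text{Ver}_{c(s)}$, so that the orthogonal projection is unique, and that horizontality transfers along the reparameterization $\varphi(s)$, which is precisely the $\tilde m(s)=m(s)\circ\varphi(s)^{-1}$ computation done just before the Definition and rests on \eqref{equivariance}. Everything else is the chain rule applied to \eqref{def} and a direct substitution into the previous proposition.
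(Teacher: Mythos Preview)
Your argument is correct and follows essentially the same route as the paper: identify $m(s)=|c_t(s)|\,\varphi_s(s)/\varphi_t(s)$ from the vertical/horizontal splitting \eqref{csver}--\eqref{cshor}, then invoke the previous proposition to conclude that this $m$ solves the ODE \eqref{mtt} with $w=c_s(s)$. The paper's proof is a two-line reference to exactly these facts; your version is more explicit about the uniqueness of the orthogonal decomposition and adds a well-posedness discussion (maximum principle for the Dirichlet problem, transport equation for $\varphi$) that the paper omits entirely.
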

\begin{proof}
We have seen in Equation \eqref{csver} that the vertical part of $c_s(s)$ can be written as $m(s)\cdot v(s)$ where $m(s) = |c_t(s)|\varphi_s(s)/\varphi_t(s)$, and as the norm of the vertical part of $c_s(s)$, $m(s)$ is solution of the ODE \eqref{mtt} for all $s$.
\end{proof}

\begin{figure}\label{fig:optmatch}
\centering
\includegraphics[width=20em]{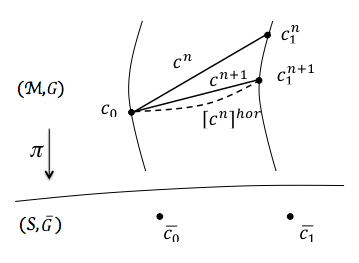}
\caption{Schematic illustration of the $n^{th}$ step of the optimal matching algorithm.}
\label{fig:optmatch}
\end{figure}
If we take the horizontal part of the geodesic linking two curves $c_0$ and $c_1$, we will obtain a horizontal path linking $c_0$ to the fiber of $c_1$ which will no longer be a geodesic path. However this path reduces the distance between $c_0$ and the fiber of $c_1$, and gives a "better" representative $c_1\circ \varphi(1)$ of the target curve. By computing the geodesic between $c_0$ and this new representative, we are guaranteed to reduce once more the distance to the fiber. The algorithm that we propose simply iterates these two steps, as illustrated in Figure \ref{fig:optmatch}.
\begin{algo}[Optimal matching] \label{alg:optmatch}
\leavevmode\par \noindent
Input: $c_0,c_1\!\in \mathcal M$.\\
Initialization: $n=0$, $c_1^n= c_1$.\\
Repeat until convergence:
\begin{enumerate}
\item construct the geodesic $s\mapsto c^n(s)$ between $c_0$ and $c_1^n$,
\item compute the horizontal part $s\mapsto [c^n]^{hor}(s)$ of $c^n$,
\item set $c^{n+1}_1 = [c^n]^{hor}(1)$,
\item set $n= n+1$.
\end{enumerate}
Output: horizontal geodesic $c^n$.
\end{algo}
Let us specify why the obtained geodesic is horizontal at the limit. The series of lengths $\left(L(c^n)\right)_{n\geq 0}$ and $\left(L([c^n]^{hor})\right)_{n\geq0}$ are non negative decreasing and verify at each step $n$
\begin{equation*}
L(c^n)\geq L([c^n]^{hor}) \geq L(c^{n+1}),
\end{equation*}
which means that they converge to the same limit. The same is true for the energies $E(c^n)$ and $E([c^n]^{hor})$, and since the $s$-derivative of $[c^n]^{hor}$ is equal to the horizontal part of the $s$-derivative of $c^n$, we get
\begin{equation*}
\int \|c^n_s(s)^{ver}\|^2_G\, \mathrm ds = E(c^n)-E([c^n]^{hor}) \underset{n\to\infty}{\longrightarrow} 0,
\end{equation*}
and so $\|c^n_s(s)^{ver}\|_G$ converges to zero for almost all $s$. Since it is enough that a geodesic be horizontal at one given time for it to be horizontal for all time $s$ (\cite{michor08}, \S 26.12), we have the following result.
\begin{proposition}[Horizontality of the solution] At the limit, the geodesic between the fibers computed in Algorithm \ref{alg:optmatch} is horizontal
\begin{equation*}
\forall s \quad \|c^n_s(s)^{ver}\| \underset{n\to\infty}{\longrightarrow} 0.
\end{equation*}
\end{proposition}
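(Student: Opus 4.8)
The plan is to promote the chain of length inequalities $L(c^n)\ge L([c^n]^{hor})\ge L(c^{n+1})$, sketched just before the statement, to a telescoping estimate on energies, and to read off the vanishing of the vertical component from it.

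\emph{Step 1: the two length inequalities.} The first, $L(c^n)\ge L([c^n]^{hor})$, is precisely the preceding proposition ($L_G(c^{hor})\le L_G(c)$) applied to $c=c^n$. For the second, note that, by construction of Algorithm \ref{alg:optmatch}, $[c^n]^{hor}$ is a path in $\mathcal M$ joining $c_0$ to $c_1^{n+1}=[c^n]^{hor}(1)$ — here using that the diffeomorphism path producing the horizontal part is taken with $\varphi(0)=\mathrm{id}$, so that $[c^n]^{hor}(0)=c^n(0)=c_0$ — whereas $c^{n+1}$ is the $G$-geodesic between those same endpoints, hence a shortest path, so $L(c^{n+1})\le L([c^n]^{hor})$. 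Therefore $(L(c^n))_n$ and $(L([c^n]^{hor}))_n$ are nonincreasing, bounded below by $0$, and interlaced, hence converge to a common limit $\ell\ge0$.

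\emph{Step 2: from lengths to energies.} Since geodesics have constant speed, $E(c^n)=L(c^n)^2\to\ell^2$. By Cauchy--Schwarz, $E([c^n]^{hor})\ge L([c^n]^{hor})^2$, so $\liminf_n E([c^n]^{hor})\ge\ell^2$. On the other hand, \eqref{cshor} shows that $[c^n]^{hor}_s(s)$ equals the horizontal part of $c^n_s(s)$ precomposed with $\varphi(s)$, and reparameterization invariance of $G$ together with the $G$-orthogonal splitting $c^n_s(s)=c^n_s(s)^{hor}+c^n_s(s)^{ver}$ yields the pointwise identity $\|c^n_s(s)\|_G^2=\|[c^n]^{hor}_s(s)\|_G^2+\|c^n_s(s)^{ver}\|_G^2$. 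Integrating in $s$ gives
\[
E(c^n)-E([c^n]^{hor})=\int_0^1\|c^n_s(s)^{ver}\|_G^2\,\mathrm ds\ge0,
\]
so $E([c^n]^{hor})\le E(c^n)\to\ell^2$; combined with the $\liminf$ bound this forces $E([c^n]^{hor})\to\ell^2$, whence the displayed difference tends to $0$, i.e. $\int_0^1\|c^n_s(s)^{ver}\|_G^2\,\mathrm ds\to0$.

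\emph{Step 3: conclusion and the main difficulty.} From $L^1$-convergence to $0$ of $s\mapsto\|c^n_s(s)^{ver}\|_G^2$ one extracts a subsequence along which $\|c^n_s(s)^{ver}\|_G\to0$ for almost every $s$; fixing one such $s_0$, the limit geodesic has horizontal velocity at time $s_0$, and a $G$-geodesic that is horizontal at a single instant is horizontal at every instant (\cite{michor08}, \S 26.12), which gives the claim. The delicate point — and the only place the argument is genuinely informal as phrased — is precisely this last step: passing from the integral statement to a pointwise-in-$s$ conclusion requires the paths $c^n$ to converge to a limiting geodesic in a topology making ``horizontal at one time'' meaningful for the limit. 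Without such a convergence statement, the clean, fully rigorous output of Steps 1--2 is the integral assertion $\int_0^1\|c^n_s(s)^{ver}\|_G^2\,\mathrm ds\to0$, of which the stated horizontality is the natural reading. I expect establishing (or dispensing with) this limiting-geodesic convergence to be the main obstacle; Steps 1 and 2 are elementary once the already-proved propositions are in hand.
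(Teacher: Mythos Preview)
Your proposal follows essentially the same route as the paper's argument (given in the paragraph immediately preceding the proposition). The paper establishes the interlacing $L(c^n)\ge L([c^n]^{hor})\ge L(c^{n+1})$, asserts that the same monotone-convergence reasoning applies to the energies, writes the identity $\int_0^1\|c^n_s(s)^{ver}\|_G^2\,\mathrm ds=E(c^n)-E([c^n]^{hor})\to 0$, and then invokes the ``horizontal at one time implies horizontal for all time'' property of geodesics.

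Two remarks. First, your Step~2 is actually more careful than the paper's: the paper simply states that the energy inequalities hold ``for the same reason'' as the length ones, whereas you supply the missing link (constant speed of $c^n$, Cauchy--Schwarz for $[c^n]^{hor}$, and the pointwise orthogonal splitting). Second, the gap you flag in Step~3 --- passing from $L^1$-vanishing of $\|c^n_s(\cdot)^{ver}\|_G^2$ to the pointwise statement, and making sense of ``the limit geodesic'' --- is present in the paper as well and is not addressed there either; the paper's argument is heuristic at exactly the same point. So you have matched the paper's level of rigour and correctly located where it is informal.
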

\begin{remark}
In this work, we will carry out step 1 using geodesic shooting. However it is important to stress that Algorithm \ref{alg:optmatch} is a general method that can be applied to any metric structure (not only elastic metrics) for which one knows how to compute geodesics and characterize the horizontal subspace of the shape bundle. It can be seen as an alternative method to the popular dynamic programming approach \cite{srivastava16}, with which we establish comparisons in Section \ref{sec:simu}. But before that, let us first introduce a formal discretization of the continuous model presented so far.
\end{remark}

\section{The discrete model}				
\label{sec:dismodel}

Applications usually give access to a finite number of observations of a continuous process and provide series of points instead of continuous curves. It is therefore important to discretize the framework presented above and to consider the finite-dimensional space of "discrete curves". From now on we restrict to base manifolds $M$ of constant sectional curvature $K$. This allows us to get an explicit formula for the Jacobi fields of $M$ (Lemma \ref{lemjacobi}) and thus derive a precise approximation of the smooth Riemannian structure of Section \ref{sec:contmodel}. Generalization to any Riemannian manifold for which the Jacobi fields are computable should not be problematic. For more complex manifolds, a faster and more approximate solution would be to directly discretize the smooth equations, at the cost of the precision of the discrete approximation. 

\subsection{The Riemannian structure}

We consider the product manifold $M^{n+1}$ of "discrete curves" given by $n+1$ points, for a fixed $n\in \mathbb N^*$. Its tangent space at a given point $\alpha = (x_0, \hdots, x_n)$ is
\begin{equation*}
T_\alpha M^{n+1} \!=\! \{ w = (w_0, \hdots, w_n) : w_k \in T_{x_k}M, \,\forall k \}.
\end{equation*}
Assuming that there exists a connecting geodesic between $x_k$ and $x_{k+1}$ for all $k$ -- which seems reasonable considering that the points $x_k$ should be "close" since they correspond to the discretization of a continuous curve -- and that two consecutive points are always distinct ($x_k\neq x_{k+1}$), we use the following notations 
\begin{equation}
\label{not}
\tau_k = \log_{x_k} x_{k+1}, \,\, q_k = \sqrt{n}\, \tau_k/\sqrt{|\tau_k|}, \,\, v_k = \tau_k/|\tau_k|,
\end{equation}
as well as ${w_k}^T = \langle w_k,v_k \rangle v_k$ and $w_k^N = w_k - {w_k}^T$ to refer to the tangential and normal components of a tangent vector $w_k\in T_{x_k}M$. Given a tangent vector $w\in T_\alpha M^{n+1}$, we consider a path of piecewise geodesic curves $[0,1]^2\ni(s,t) \mapsto c^w(s,t)\in M$ such that $c^w(0,\frac{k}{n})=x_k$ for $k=0,\hdots,n$, $c^w(s,\cdot)$ is a geodesic of $M$ on the interval $[\frac{k}{n},\frac{k+1}{n}]$ for all $s\in[0,1]$ and $k$ -- and in particular $c^w_t(0,\frac{k}{n})=n\tau_k$ -- and such that $c^w_s(0,\frac{k}{n})=w_k$, as illustrated in Figure \ref{fig:discret}. Then we define the squared norm of $w$ by
\begin{figure}
\centering
\includegraphics[width=3.5cm]{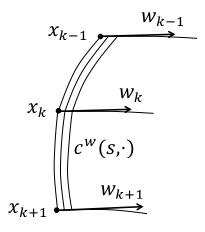}
\caption{Schematic representation of a path of piecewise geodesic curve associated to a pair $(\alpha,w)$.}
\label{fig:discret}
\end{figure}
\begin{equation}
\label{dismetric}
G^n_\alpha(w,w) = |w_0|^2 + \frac{1}{n} \sum_{k=0}^{n-1} | \nabla_s q^w(0,\tfrac{k}{n})|^2.
\end{equation}
This definition is a discrete analog of \eqref{contmetric}, and just as in the continuous case, it does not depend on the choice of $c^w$. Indeed, we can also obtain a discrete analog of \eqref{contmetricbis}.
\begin{proposition}
The metric $G^n$ can also be written
\begin{equation*}
G_\alpha^n(w,w) = |w_0|^2 + \sum_{k=0}^{n-1} \Big( \big|(D_\tau w)_k^N \big|^2+ \tfrac{1}{4} \big|(D_\tau w)_k^T\big|^2 \Big)\frac{1}{|\tau_k|},
\end{equation*}
where the map $D_\tau : T_\alpha M^{n+1} \rightarrow T_\alpha M^{n+1}$, $w \mapsto D_\tau w =\big((D_\tau w)_0, \hdots, (D_\tau w)_n\big)$ is defined by
\begin{align*}
(D_\tau w)_k &:=\tfrac{1}{n} \nabla_t c^w_s(0,\tfrac{k}{n})\\
&= ( {w_{k+1}}^\parallel - w_k)^T + b_k^{-1}( {w_{k+1}}^\parallel - a_k w_k)^N,
\end{align*}
if $w_{k+1}^\parallel$ denotes the parallel transport of $w_{k+1}$ from $x_{k+1}$ to $x_k$ along the geodesic, and the coefficients $a_k$ and $b_k$ take the following values depending on the sectional curvature $K$ of the base manifold $M$
\begin{equation}
\label{akbk}
\left\{ \begin{matrix*}[l] 
a_k =\cosh|\tau_k|, \,\, & b_k=\sinh|\tau_k|/|\tau_k|,  \,\,& \text{if } K=-1,\\ 
a_k=1, \,\, & b_k= 1, \,\,&\text{if } K=0,\\ 
a_k=\cos|\tau_k|, \,\, & b_k=\sin|\tau_k|/|\tau_k|, \,\,& \text{if } K=+1.
\end{matrix*} \right.
\end{equation}
\end{proposition}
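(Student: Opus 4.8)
The plan is to retrace the steps of the continuous derivation of \eqref{contmetricbis}, replacing curve derivatives by the Jacobi-field data carried by the piecewise-geodesic interpolation. Exactly as in the smooth case, differentiating $q^w = c^w_t/|c^w_t|^{1/2}$ with respect to $s$ and swapping the covariant derivatives (the Levi-Civita connection is torsion free) gives, at $t = k/n$,
\begin{equation*}
\nabla_s q^w(0,\tfrac{k}{n}) = |c^w_t(0,\tfrac{k}{n})|^{-1/2}\Big( \big(\nabla_t c^w_s(0,\tfrac{k}{n})\big)^N + \tfrac{1}{2}\big(\nabla_t c^w_s(0,\tfrac{k}{n})\big)^T \Big).
\end{equation*}
Since $c^w_t(0,\tfrac{k}{n}) = n\tau_k$, one has $|c^w_t(0,\tfrac{k}{n})| = n|\tau_k|$ and the tangential/normal splitting at $t=k/n$ is taken along $v_k$; writing $\nabla_t c^w_s(0,\tfrac{k}{n}) = n\,(D_\tau w)_k$ and using orthogonality of the two parts, $|\nabla_s q^w(0,\tfrac{k}{n})|^2 = \tfrac{n}{|\tau_k|}\big( |(D_\tau w)_k^N|^2 + \tfrac{1}{4}|(D_\tau w)_k^T|^2 \big)$. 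Substituting into the definition \eqref{dismetric}, the prefactor $\tfrac{1}{n}$ absorbs the factor $n$, so the announced formula follows once the explicit expression for $(D_\tau w)_k$ is established.

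To obtain that expression, observe that by construction the restriction of $s\mapsto c^w(s,\cdot)$ to each interval $[\tfrac{k}{n},\tfrac{k+1}{n}]$ is a variation through geodesics of $M$, so $t\mapsto J(t) := c^w_s(0,t)$ restricted to this interval is a Jacobi field along the geodesic segment joining $x_k$ to $x_{k+1}$, with boundary values $J(\tfrac{k}{n}) = w_k$ and $J(\tfrac{k+1}{n}) = w_{k+1}$; what we need is $\nabla_t J(\tfrac{k}{n}) = n\,(D_\tau w)_k$. Decompose $J = J^T + J^N$ along $v_k$ (parallel along the segment since the latter is a geodesic). The tangential component of a Jacobi field is affine in the parameter, so its covariant derivative at $k/n$ is the rescaled endpoint difference $n\,(w_{k+1}^\parallel - w_k)^T$, where $w_{k+1}^\parallel$ denotes the parallel transport of $w_{k+1}$ back to $x_k$; here parallel transport preserves inner products, hence commutes with the splitting along $v_k$. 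The normal component solves the Jacobi equation $\nabla_t^2 J^N + K|c^w_t|^2 J^N = 0$, whose solution in constant curvature is given explicitly by Lemma \ref{lemjacobi} as a combination of parallel normal fields with coefficients built from $\cos$ and $\sin$ if $K=1$, from affine functions if $K=0$, and from $\cosh$ and $\sinh$ if $K=-1$. Evaluating this solution at both endpoints of the segment and eliminating its (unknown) initial normal derivative yields $\nabla_t J^N(\tfrac{k}{n}) = n\,b_k^{-1}\big( (w_{k+1}^\parallel)^N - a_k w_k^N \big) = n\,b_k^{-1}(w_{k+1}^\parallel - a_k w_k)^N$, with $(a_k,b_k)$ exactly as in \eqref{akbk}. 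Summing the two parts gives $(D_\tau w)_k = (w_{k+1}^\parallel - w_k)^T + b_k^{-1}(w_{k+1}^\parallel - a_k w_k)^N$, which is the claim; it also makes evident that $G^n_\alpha(w,w)$ depends on $w$ only through $w_0$ and the vectors $(D_\tau w)_k$, hence not on the choice of the interpolating path $c^w$, consistently with \eqref{dismetric}.

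I expect the delicate point to be the normal-part computation: one must keep careful track of the tangential/normal splittings at the two distinct base points $x_k$ and $x_{k+1}$, of the parallel transport relating them, and of the correct constant-curvature coefficients, so that everything collapses to the clean expression involving $a_k$ and $b_k$. This step tacitly uses that each geodesic segment is free of conjugate points (so that $b_k \neq 0$, e.g.\ $|\tau_k| < \pi$ when $K = 1$), which is consistent with the standing assumption that consecutive, distinct points $x_k$ are joined by a genuine connecting geodesic and are close to one another.
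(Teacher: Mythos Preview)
Your proof is correct and follows essentially the same approach as the paper's own proof: both identify $t\mapsto c^w_s(0,t)$ on each segment as a Jacobi field with boundary values $w_k$, $w_{k+1}$, invoke Lemma~\ref{lemjacobi} to solve for $\nabla_t J(\tfrac{k}{n})$ in terms of the endpoints and the coefficients $a_k,b_k$, and then differentiate the SRV representation to express $\nabla_s q^w(0,\tfrac{k}{n})$ in terms of $(D_\tau w)_k$. The only difference is order of exposition---you first rewrite the metric in terms of $(D_\tau w)_k$ and then derive its explicit form, whereas the paper does the reverse---and your additional remark on the need for $b_k\neq 0$ (no conjugate points) is a nice clarification absent from the original.
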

\begin{remark}
Notice that in the flat case our definition gives $(D_\tau w)_k = w_{k+1} - w_k$. In the non-flat case, when the discretization gets "thinner", i.e. $n\rightarrow \infty$ and $|\tau_k|\rightarrow 0$ while $n|\tau_k|$ stays bounded for all $0\leq k\leq n$, we get $(D_\tau w)_k \underset{n\rightarrow\infty}{=} {w_{k+1}}^\parallel - w_k + o(1)$.
\end{remark}
Before we prove this proposition, let us recall a well-known result about Jacobi fields that will prove useful to derive the equations in the discrete case.
 \begin{lemma}
\label{lemjacobi}
Let $\gamma : [0,1] \rightarrow M$ be a geodesic of a manifold $M$ of constant sectional curvature $K$, and $J$ a Jacobi field along $\gamma$. Then the parallel transport of $J(t)$ along $\gamma$ from $\gamma(t)$ to $\gamma(0)$ is given by 
\begin{align*}
J(t)^{t,0} &= J^T(0) + \tilde a_k(t) J^N(0)  \\
&\hspace{7em}+ t \,\nabla_tJ^T(0) +  \tilde b_k(t)\nabla_tJ^N(0),
\end{align*}
for all $t\in[0,1]$, where
\begin{equation*}
\left\{\begin{matrix*}[l]
\tilde a_k(t)=\cosh\left( |\gamma'(0)| t \right), &\tilde b_k(t)=\frac{\sinh(|\gamma'(0)| t)}{|\gamma'(0)|},  & K=-1,\\
\tilde a_k(t)=1, &\tilde b_k(t)= t, & K=0,\\
\tilde a_k(t)=\cos\left( |\gamma'(0)| t \right), &\tilde b_k(t)=\frac{\sin(|\gamma'(0)| t)}{|\gamma'(0)|}, & K=+1.
\end{matrix*} \right.
\end{equation*}
\end{lemma}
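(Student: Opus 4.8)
The plan is to prove Lemma~\ref{lemjacobi} by the standard route: recall that a Jacobi field along a geodesic $\gamma$ satisfies the Jacobi equation $\nabla_t^2 J + \mathcal R(J,\gamma')\gamma' = 0$, decompose $J$ into its tangential and normal parts along $\gamma$, solve each part explicitly using the constant-curvature assumption, and then express everything after parallel transport back to $\gamma(0)$. First I would note that on a geodesic $|\gamma'|$ is constant in $t$, so writing $\lambda := |\gamma'(0)|$ is legitimate. The tangential component $J^T(t) = \langle J(t),\gamma'(t)/|\gamma'|\rangle\,\gamma'(t)/|\gamma'|$ is always a Jacobi field on its own (since $\gamma'$ and $t\gamma'$ span the tangential Jacobi fields), and satisfies $\nabla_t^2 J^T = 0$ because $\mathcal R(\gamma',\gamma')\gamma' = 0$; hence after parallel transport its coefficient is affine in $t$, giving the $J^T(0) + t\,\nabla_t J^T(0)$ terms. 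The normal component $J^N$ is also a Jacobi field, and the key simplification is that for constant sectional curvature $K$ one has $\mathcal R(J^N,\gamma')\gamma' = K\,|\gamma'|^2\,J^N = K\lambda^2 J^N$ (using $\langle J^N,\gamma'\rangle = 0$ and the constant-curvature formula for the curvature tensor). Parallel-transporting the whole equation back to $\gamma(0)$ and writing $f(t) := (J^N(t))^{t,0}$, one gets the scalar-type linear ODE $f''(t) + K\lambda^2 f(t) = 0$ with $f(0) = J^N(0)$, $f'(0) = \nabla_t J^N(0)$, because parallel transport commutes with $\nabla_t$ and preserves the normal subspace.

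The second step is to solve this ODE in the three cases. For $K=0$ we get $f(t) = J^N(0) + t\,\nabla_t J^N(0)$, i.e. $\tilde a_k(t) = 1$, $\tilde b_k(t) = t$. For $K=+1$ the equation is $f'' + \lambda^2 f = 0$, whose solution with the given initial data is $f(t) = \cos(\lambda t)\,J^N(0) + \lambda^{-1}\sin(\lambda t)\,\nabla_t J^N(0)$, giving $\tilde a_k(t) = \cos(\lambda t)$, $\tilde b_k(t) = \sin(\lambda t)/\lambda$. For $K=-1$ the equation is $f'' - \lambda^2 f = 0$, with solution $f(t) = \cosh(\lambda t)\,J^N(0) + \lambda^{-1}\sinh(\lambda t)\,\nabla_t J^N(0)$, giving the hyperbolic coefficients. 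Adding the tangential and normal contributions and using linearity of parallel transport yields exactly the stated formula $J(t)^{t,0} = J^T(0) + \tilde a_k(t)J^N(0) + t\,\nabla_t J^T(0) + \tilde b_k(t)\,\nabla_t J^N(0)$.

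Two small technical points deserve care rather than being genuine obstacles. One is justifying that $\nabla_t$ commutes with parallel transport back to $\gamma(0)$ in the precise sense used: if $X(t)$ is a vector field along $\gamma$ and $\tilde X(t) := X(t)^{t,0}$ its parallel transport to the fixed point $\gamma(0)$, then $\tilde X$ is an honest curve in the fixed vector space $T_{\gamma(0)}M$ and $\widetilde{\nabla_t X}(t) = \tilde X'(t)$, $\widetilde{\nabla_t^2 X}(t) = \tilde X''(t)$; this is immediate from the definition of the covariant derivative via parallel frames. The other is verifying that $J^T$ and $J^N$ are separately Jacobi fields: this follows because $\langle J,\gamma'\rangle$ is affine in $t$ (differentiate twice and use the Jacobi equation together with $\langle\mathcal R(J,\gamma')\gamma',\gamma'\rangle = 0$), so $J^T$ is a linear combination of the Jacobi fields $\gamma'$ and $t\gamma'$, and then $J^N = J - J^T$ is a difference of Jacobi fields. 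I do not expect a serious obstacle here; the only thing to be mildly careful about is the bookkeeping of the constant $\lambda = |\gamma'(0)|$ and the sign conventions in the constant-curvature curvature formula $\mathcal R(X,Y)Z = K(\langle Y,Z\rangle X - \langle X,Z\rangle Y)$, which fixes the sign of the $K\lambda^2$ term and hence selects trigonometric versus hyperbolic solutions.
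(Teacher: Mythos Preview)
Your proposal is correct and follows essentially the same route as the paper: start from the Jacobi equation, split $J$ into tangential and normal parts (using that $\langle J,\gamma'\rangle$ is affine so $J^T$ and hence $J^N$ are Jacobi fields), invoke the constant-curvature identity $\mathcal R(J^N,\gamma')\gamma'=K|\gamma'|^2 J^N$, and solve the resulting second-order linear ODE for the parallel-transported normal part in each of the three cases $K\in\{-1,0,1\}$. Your write-up is in fact slightly more explicit than the paper's about why passing to parallel coordinates turns the covariant equation into an ordinary ODE in $T_{\gamma(0)}M$, but the argument is the same.
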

\begin{proof}[Proof of Lemma 1]
For the sake of completeness, the proof is reminded in the appendix.
\end{proof}
\begin{proof}[Proof of Proposition 6]
Let $\alpha\in M^{n+1}$ be a "discrete curve" and $w\in T_\alpha M^{n+1}$ a tangent vector at $\alpha$. Consider a path of piecewise geodesic curves $s\mapsto c^w(s)$ that verifies all the conditions given above to define $G^n(w,w)$, and set $(D_\tau w)_k :=\frac{1}{n} \nabla_t c^w_s(0,\frac{k}{n})$. Then by definition, the vector field $J_k(u) = c^w_s(s,\frac{k+u}{n})$, $u\in[0,1]$ is a Jacobi field along the geodesic linking $x_k$ to $x_{k+1}$, verifying $J_k(0)=w_k$, $J_k(1)=w_{k+1}$ and $\nabla_uJ_k(0) = (D_\tau w)_k$. Applying Lemma \ref{lemjacobi} gives 
\begin{equation*}
{w_{k+1}}^\parallel = {w_k}^T + a_k{w_k}^N + {(D_\tau w)_k}^T + b_k{(D_\tau w)_k}^N.
\end{equation*}
This gives $({w_{k+1}}^\parallel)^T = {w_k}^T + {(D_\tau w)_k}^T$ and $({w_{k+1}}^\parallel)^N = a_k{w_k}^N + b_k{(D_\tau w)_k}^N$ and so
\begin{align*}
(D_\tau w)_k &= {(D_\tau w)_k}^T+{(D_\tau w)_k}^N\\
&=( {w_{k+1}}^\parallel - w_k)^T + b_k^{-1}( {w_{k+1}}^\parallel - a_k w_k)^N.
\end{align*}
Finally, we observe that the covariant derivative involved in the definition of $G^n$ can be written
\begin{align*}
\nabla_s q^w(0,\tfrac{k}{n})&=| c^w_t(0,\tfrac{k}{n})|^{-\frac{1}{2}}(\nabla_s c^w_t(0,\tfrac{k}{n}) - \tfrac{1}{2}\nabla_s c^w_t(0,\tfrac{k}{n})^T)\\
&=|n\tau_k|^{-\frac{1}{2}}\big(n(D_\tau w)_k - \tfrac{1}{2}n{(D_\tau w)_k}^T), 
\end{align*}
i.e.
\begin{equation*}
\nabla_s q^w(0,\tfrac{k}{n}) = (n/|\tau_k|)^{1/2}\big({(D_\tau w)_k}^N + \tfrac{1}{2}{(D_\tau w)_k}^T\big). 
\end{equation*}
Injecting this into \eqref{dismetric} gives the desired formula.
\end{proof}
Now we present the main result of this section, that is, the convergence of the discrete model toward the continuous model. 
\begin{definition}
Let $\alpha=(x_0,\hdots, x_n)\in M^{n+1}$ be a discrete curve, and $t\mapsto c(t)\in M$ a smooth curve. We say that $\alpha$ is \emph{the discretization of size $n$} of $c$ when $c(\frac{k}{n})=x_k$ for all $k=0,\hdots,n$. If $s\mapsto \alpha(s)=(x_0(s), \hdots, x_n(s)) \in M^{n+1}$ is a path of discrete curves and $s\mapsto c(s) \in \mathcal M$ a path of smooth curves, then $\alpha$ is \emph{the discretization of size $n$} of $c$ when $\alpha(s)$ is the discretization of $c(s)$ for all $s\in [0,1]$, i.e. when $x_k(s)=c(s,\frac{k}{n})$ for all $s$ and $k$. We will still use this term if $c$ is not smooth, and speak of the only path of piecewise-geodesic curves of which $\alpha$ is the discretization.
\end{definition}
Let $[0,1]\ni s\mapsto \alpha(s)=(x_0(s),\hdots,x_n(s))\in M^{n+1}$ be a path of discrete curves. Defining $\tau_k(s)$ and $q_k(s)$ as in \eqref{not} for all $s\in[0,1]$, the path $\alpha$ can be represented by its SRV representation $[0,1]\rightarrow M\times T_\alpha{M^{n+1}}$,
\begin{equation}
\label{SRVdis}
s\mapsto \Big(x_0(s), (q_k(s))_{0\leq k\leq n-1}\Big).
\end{equation}
To compute the squared norm of its speed vector $\alpha'(s)$, consider the path of piecewise geodesic curves $[0,1]^2\ni(s,t) \mapsto c(s,t)\in M$ such that $c(s,\frac{k}{n})=x_k(s)$ and $c_t(s,\frac{k}{n})=n\tau_k(s)$ for all $s$ and $k$. Then, notice that we have
\begin{equation}
\label{dtaualpha}
\begin{aligned}
\nabla_sq(s,\tfrac{k}{n}) &= \nabla_sq_k(s),\\
(D_\tau \alpha'(s))_k &= \tfrac{1}{n} \nabla_t c_s(s,\tfrac{k}{n}) =\nabla_s\tau_k(s),
\end{aligned}
\end{equation}
and so the squared norm of the speed vector of $\alpha$ can be expressed in terms of the SRV representation
\begin{equation*}
G^n(\alpha'(s),\alpha'(s)) = |{x_0}'(s)|^2 + \frac{1}{n} \sum_{k=0}^{n-1} |\nabla_sq_k(s) |^2.
\end{equation*}
In the following result, we show that if $s\mapsto \alpha(s)$ is the discretization of a path $s\mapsto c(s) \in \mathcal M$ of continuous curves, then its energy with respect to $G^n$,
\begin{equation}
\label{disenergy}
E^n(\alpha) = \frac{1}{2} \int_0^1 \left( |x_0'(s)|^2 + \frac{1}{n} \sum_{k=0}^{n-1} |\nabla_sq_k(s) |^2 \right) \mathrm ds,
\end{equation}
gets closer to the energy \eqref{contenergy} of $c$ with respect to $G$ as the size of the discretization grows.
\begin{theorem}[Convergence of the discrete model to the continuous model]
\label{thm}
Let $s \mapsto c(s)$ be a $C^1$-path of $C^2$-curves with non vanishing derivative with respect to $t$. This path can be identified with an element $(s,t)\mapsto c(s,t)$ of $C^{1,2}([0,1]\times[0,1],M)$ such that $c_t \neq 0$. Consider the $C^1$-path in $M^{n+1}$, $s \mapsto \alpha(s)=(x_0(s), \hdots, x_n(s))$, that is the discretization of size $n$ of $c$. Then there exists a constant $\lambda>0$ such that for $n$ large enough, the difference between the energies of $c$ and $\alpha$ is bounded by
\begin{equation*}
| E(c) - E^n(\alpha)| \leq \frac{\lambda}{n}\, (\inf |c_t|)^{-1} |c_s|_{2,\infty}^2 \left(1+|c_t|_{1,\infty}\right)^3,
\end{equation*}
where $E$ and $E^n$ are the energies with respect to metrics $G$ and $G^n$ respectively and where
\begin{align*}
|c_t|_{1,\infty} &:= |c_t|_\infty + |\nabla_tc_t|_\infty,\\
|c_s|_{2,\infty} &:= |c_s|_\infty + |\nabla_tc_s|_\infty + |\nabla^2_tc_s|_\infty,
\end{align*}
if $|w|_\infty \!:= \!\!\!\!\underset{s,t\in[0,1]}{\sup}|w(s,t)|$ denotes the supremum over both $s$ and $t$ of a vector field $w$ along $c$.
\end{theorem}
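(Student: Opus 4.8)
The plan is to estimate the two energies by comparing their $s$-integrands pointwise and then integrating over $[0,1]$. Since $x(s)=c(s,0)=x_0(s)$, the first terms of $E(c)$ and $E^n(\alpha)$ coincide, so the whole estimate reduces to bounding, uniformly in $s\in[0,1]$, the difference between $\int_0^1|\nabla_sq(s,t)|^2\,\mathrm dt$ and the sum $\frac1n\sum_{k=0}^{n-1}|\nabla_sq_k(s)|^2$. I would split this difference into a \emph{quadrature error} $\frac1n\sum_{k=0}^{n-1}|\nabla_sq(s,\tfrac kn)|^2-\int_0^1|\nabla_sq(s,t)|^2\,\mathrm dt$ and a \emph{discretization error} $\frac1n\sum_{k=0}^{n-1}\big(|\nabla_sq_k(s)|^2-|\nabla_sq(s,\tfrac kn)|^2\big)$; this makes sense because $q_k(s)$ and $q(s,\tfrac kn)$ both lie in $T_{c(s,k/n)}M$, so the covariant derivatives along $s\mapsto c(s,\tfrac kn)$ live in the same space and may be subtracted directly.

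The quadrature error is controlled by the elementary left-endpoint Riemann-sum estimate $\big|\frac1n\sum_{k=0}^{n-1}f(\tfrac kn)-\int_0^1f\big|\le\frac1n\sup_{[0,1]}|f'|$ applied to $f(t)=|\nabla_sq(s,t)|^2$. Using $\nabla_sq(s,t)=|c_t|^{-1/2}\big((\nabla_tc_s)^N+\tfrac12(\nabla_tc_s)^T\big)$ from Section~\ref{sec:contmodel} (with $N$, $T$ relative to $v=c_t/|c_t|$), the $t$-derivative of $f$ is a quotient whose numerator is polynomial in $c_t$, $\nabla_tc_t$, $\nabla_tc_s$, $\nabla_t^2c_s$ and whose denominator is a positive power of $|c_t|$; bounding it with $|c_t|\ge\inf|c_t|$ yields a contribution of the announced form $\tfrac\lambda n(\inf|c_t|)^{-1}|c_s|_{2,\infty}^2(1+|c_t|_{1,\infty})^3$.

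For the discretization error the heart of the matter is the uniform bound $\big|\,|\nabla_sq_k(s)|-|\nabla_sq(s,\tfrac kn)|\,\big|=O(1/n)$. The main tool is the identity obtained in the proof of Proposition~6, namely $\nabla_sq_k(s)=(n/|\tau_k(s)|)^{1/2}\big((\nabla_s\tau_k(s))^N+\tfrac12(\nabla_s\tau_k(s))^T\big)$ (now with $N$, $T$ relative to $v_k=\tau_k/|\tau_k|$), combined with \eqref{dtaualpha} and the observation that $u\mapsto c_s(s,\tfrac{k+u}{n})$ is a Jacobi field along the geodesic segment from $x_k(s)$ to $x_{k+1}(s)$: Lemma~\ref{lemjacobi} then expresses $\nabla_s\tau_k(s)$ through the endpoint values of $c_s$ and the coefficients $\tilde a_k$, $\tilde b_k$, which satisfy $\tilde a_k(1)=1+O(|\tau_k|^2)$ and $\tilde b_k(1)=1+O(|\tau_k|^2)$ since $|\tau_k|=O(1/n)$. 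A second-order Taylor expansion of $t\mapsto c(s,t)$ gives $n\tau_k(s)=c_t(s,\tfrac kn)+O(1/n)$, hence $v_k(s)=v(s,\tfrac kn)+O(1/n)$, and parallel transporting $c_s(s,\tfrac{k+1}{n})$ back along the segment to $c(s,\tfrac kn)$ gives $c_s(s,\tfrac{k+1}{n})^\parallel=c_s(s,\tfrac kn)+\tfrac1n\nabla_tc_s(s,\tfrac kn)+O(1/n^2)$, with the $O(1/n^2)$ controlled by $|\nabla_t^2c_s|_\infty$; substituting these into the Jacobi-field identity yields $n\,\nabla_s\tau_k(s)=\nabla_tc_s(s,\tfrac kn)+O(1/n)$ and therefore $\nabla_sq_k(s)=\nabla_sq(s,\tfrac kn)+O(1/n)$, with implied constants polynomial in $|c_t|_{1,\infty}$, $|c_s|_{2,\infty}$ and $(\inf|c_t|)^{-1}$. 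Writing $|a|^2-|b|^2=\langle a-b,\,a+b\rangle$, squaring and averaging over $k$ turns this into a bound of the same shape; adding it to the quadrature error and integrating in $s$ gives the theorem.

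The step I expect to be the main obstacle is precisely this last estimate: carrying out the Jacobi-field and Taylor bookkeeping uniformly in $k$ while keeping careful track of the numerous parallel transports — every intermediate quantity naturally lives in a different tangent space of $M$ — and checking that the accumulated constant genuinely collapses to the stated polynomial $|c_s|_{2,\infty}^2(1+|c_t|_{1,\infty})^3/\inf|c_t|$ rather than to something with higher powers. One also has to make explicit where "$n$ large enough" enters: it guarantees that consecutive points $x_k(s)$, $x_{k+1}(s)$ are joined by a unique minimizing geodesic on which $\log_{x_k(s)}$ is defined, and that each $|\tau_k(s)|$ stays in the range where $b_k\neq0$ — the only genuinely curvature-sensitive constraint, active when $K=+1$.
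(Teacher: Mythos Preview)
Your proposal is sound and would yield the $O(1/n)$ convergence, but it follows a genuinely different decomposition from the paper's, and the difference is not merely cosmetic.

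The paper does \emph{not} split into ``quadrature error'' plus ``discretization error'' at the mesh points. Instead it inserts the intermediate piecewise-geodesic interpolant $\hat c$ (the unique path with $\hat c(s,\tfrac kn)=x_k(s)$ and $\hat c(s,\cdot)$ geodesic on each $[\tfrac kn,\tfrac{k+1}{n}]$) and writes
\[
|E(c)-E^n(\alpha)|\le |E(c)-E(\hat c)|+|E(\hat c)-E^n(\alpha)|,
\]
using the exact identity $\nabla_sq_k(s)=\nabla_s\hat q(s,\tfrac kn)$. Step~1 compares $\nabla_sq$ to $\nabla_s\hat q$ pointwise in $t$; Step~2 compares $\nabla_s\hat q(s,t)$ to its value at the left endpoint $\tfrac kn$, exploiting that along each geodesic piece $|\hat c_t|$ is \emph{constant}, so the prefactor $|\hat c_t|^{-1/2}$ never gets differentiated in $t$. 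This is precisely what keeps the final constant at $(\inf|c_t|)^{-1}$.

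Your quadrature route differentiates $f(t)=|\nabla_sq(s,t)|^2$ directly; since $\partial_t|c_t|^{-1}=-|c_t|^{-2}\langle\nabla_tc_t,v\rangle$, the bound on $\sup|f'|$ unavoidably produces a term of size $(\inf|c_t|)^{-2}|\nabla_tc_t|_\infty|\nabla_tc_s|_\infty^2$, one power of $(\inf|c_t|)^{-1}$ worse than the stated estimate. So the concern you flag at the end is real: the bound you obtain has the right rate but not the exact shape claimed in the theorem. The paper's intermediate $\hat c$ is the trick that buys back that power.

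Two smaller points. First, the sentence ``$u\mapsto c_s(s,\tfrac{k+u}{n})$ is a Jacobi field along the geodesic segment'' is not literally true: $c_s$ lives along $c(s,\cdot)$, which is not geodesic. What \emph{is} a Jacobi field along the geodesic is $\hat c_s$, and it shares endpoint values with $c_s$; since your computation only uses those endpoint values, your conclusion $n\nabla_s\tau_k=\nabla_tc_s(s,\tfrac kn)+O(1/n)$ survives, but the phrasing should be adjusted. Second, the Taylor step $n\tau_k=c_t(s,\tfrac kn)+O(1/n)$ is a statement about the Riemannian logarithm, not a Euclidean difference; the paper makes this precise by passing to a chart and invoking that geodesics are locally straight up to a quadratic correction --- a detail worth spelling out, since it is where the constant $|c_t|_{1,\infty}^2$ first enters.
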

\begin{remark}
Note that since we assume that $c$ is a $C^1$-path of $C^2$-curves, the following norms are bounded for $i=1,2$,
\begin{equation*}
|c_t|_\infty, \,|c_s|_\infty, \,|\nabla^i_tc_t|_\infty, \,|\nabla^i_tc_s|_\infty <\infty.
\end{equation*}
\end{remark}
\begin{proof}[Proof of Theorem \ref{thm}]
The proof is put off to Section \ref{sec:proofthm}.
\end{proof}
Now that we have established a formal Riemannian setting to study discrete curves defined by a series of points, and that we have studied its link to the continuous model, we need to derive the equations of the corresponding geodesics and Jacobi fields to apply the methods described in Section \ref{sec:contmodel}. For the sake of readability, we first introduce some notations.

\subsection{Computing geodesics in the discrete setting}

\subsubsection{Notations}

The purpose of the notations that we introduce here is to lighten the equations derived in the rest of the paper. 
For any discrete curve $\alpha=(x_0,\hdots,x_n)\in M^{n+1}$ we define for all $0\leq k \leq n$, using the coefficients $a_k$ and $b_k$ defined by \eqref{akbk} and \eqref{not}, the functions $f_k, g_k : T_{x_k}M \rightarrow T_{x_k}M$,
\begin{align*}
f_k &: w \,\mapsto\, w^T + a_k {w}^N,\\
g_k &: w \,\mapsto\, |q_k| ( 2w^T + b_k w ^N ).
\end{align*}
and for $k=0,\hdots, n-1$, 
the functions $f_k^{(-)}, g_k^{(-)} : T_{x_{k+1}}M \rightarrow T_{x_k}M$ by
\begin{align*}
f_k^{(-)} = f_k \circ P^{x_{k+1},x_k}_{\gamma_k}, \quad g_k^{(-)} = g_k \circ P^{x_{k+1},x_k}_{\gamma_k},
\end{align*}
where $\gamma_k$ denotes the geodesic between $x_k$ and $x_{k+1}$, which we previously assumed existed. Notice that when the discretization gets "thinner", that is $n\rightarrow \infty$, $|\tau_k|\rightarrow 0$ while $n|\tau_k|$ stays bounded for all $0\leq k\leq n$, we get in the non flat setting, for any fixed $w\in T_{x_{k+1}}M$, $f_k(w) = w + o(1/n)$ and $g_k(w) = |q_k| (w + {w}^T)+ o(1/n)$ - in the flat setting, these are always equalities. Now if we consider a path $s\mapsto \alpha(s)=(x_0(s),\hdots,x_n(s))$ of discrete curves, we can define for each $s$ the functions 
\begin{align*}
&f_k(s), \,g_k(s) :T_{x_k(s)}M \rightarrow T_{x_k(s)}M,\\
&f_k(s)^{(-)}, \,g_k(s)^{(-)} : T_{x_{k+1}(s)}M \rightarrow T_{x_k(s)}M, 
\end{align*}
for $0\leq k\leq n$ and $0\leq k \leq n-1$ respectively, corresponding to the discrete curve $\alpha(s)$. It is of interest for the rest of this paper to compute the covariant derivatives of these maps with respect to $s$. 
\begin{lemma}
\label{lem:ns}
The first and second order covariant derivatives of $f_k$ and $g_k$ with respect to $s$ are functions $T_{x_k(s)}M \rightarrow T_{x_k(s)}M$ defined by
\begin{align*}
&\nabla_sf_k(w)= \partial_sa_k w^N \\
&\hspace{2em}+ (1 - a_k)  \big( \langle w, \nabla_sv_k\rangle v_k + \langle w,v_k\rangle \nabla_sv_k \big),\\
&\nabla_sg_k(w)= \partial_s|q_k|/|q_k|  g_k(w) + |q_k| \partial_sb_k w^N \\
&\hspace{2em}+ |q_k|( 2 - b_k)  \big( \langle w,\nabla_sv_k\rangle v_k + \langle w,v_k \rangle \nabla_sv_k \big),\\
&\nabla_s^2f_k (w)=\partial^2_sa_k w^N - 2 \partial_sa_k\big(\langle w,\nabla_sv_k\rangle v_k \\
&\hspace{2em}+ \langle w,v_k\rangle \nabla_sv_k\big)+(1-a_k)  \big(\langle w,\nabla_s^2v_k\rangle v_k\\
& \hspace{2em}+ 2\langle w,\nabla_sv_k\rangle \nabla_sv_k + \langle w,v_k\rangle \nabla_s^2v_k\big),\\
&\nabla_s^2g_k(w)= \partial_s\big(\partial_s|q_k|/|q_k|\big)  g_k(w)+ \partial_s|q_k|/|q_k|  \nabla_sg_k(w)  \\
&\hspace{2em}+(\partial_s|q_k|\partial_sb_k+|q_k|\partial^2_sb_k) w^N\\
&\hspace{2em}+ \,|q_k|(2-b_k) \big(\langle w,\nabla_s^2v_k\rangle v_k + 2\langle w, \nabla_sv_k\rangle \nabla_sv_k \\
&\hspace{2em}+ \langle w,v_k \rangle \nabla_s^2v_k \big) + \big(\partial_s|q_k|(2-b_k)  \\
&\hspace{2em}- 2|q_k|\partial_sb_k\big) \big(\langle w,\nabla_sv_k\rangle v_k + \langle w,v_k\rangle \nabla_sv_k\big).
\end{align*}
\end{lemma}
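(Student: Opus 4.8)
The plan is to read each of $f_k(s)$ and $g_k(s)$ as a smooth field of endomorphisms of the tangent spaces along the base curve $s\mapsto x_k(s)$ in $M$, and to compute its covariant derivative with the standard device of testing against a parallel vector field. Concretely, fix $s_0$ and choose a vector field $w$ along $s\mapsto x_k(s)$ with $\nabla_s w(s_0)=0$; the Leibniz rule for the pulled-back Levi--Civita connection then gives $\nabla_s\bigl(f_k(s)(w(s))\bigr)\big|_{s_0}=(\nabla_s f_k)(w(s_0))+f_k(\nabla_s w(s_0))=(\nabla_s f_k)(w(s_0))$, and likewise for $g_k$ and for the iterated derivatives. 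So the whole computation reduces to differentiating the explicit formulas for $f_k(w)$ and $g_k(w)$ while treating $w$ as $\nabla_s$-parallel, using the scalar derivatives $\partial_s a_k,\partial_s b_k,\partial_s|q_k|$ and the vector derivatives $\nabla_s v_k,\nabla_s^2 v_k$ (with $\langle v_k,\nabla_s v_k\rangle=0$, since $v_k$ is a unit field) as building blocks.

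First I would rewrite the maps in the convenient form $f_k(w)=a_k\,w+(1-a_k)\,w^T$ and $g_k(w)=|q_k|\bigl(b_k\,w+(2-b_k)\,w^T\bigr)$ with $w^T=\langle w,v_k\rangle v_k$, and record $\nabla_s(w^T)=\langle w,\nabla_s v_k\rangle v_k+\langle w,v_k\rangle\nabla_s v_k$. Applying $\nabla_s$ and using $\nabla_s w=0$, the $a_k w$ (resp.\ $b_k w$) term contributes $\partial_s a_k\,w$ (resp.\ $\partial_s b_k\,w$), the $(1-a_k)w^T$ term contributes $-\partial_s a_k\,w^T+(1-a_k)\nabla_s(w^T)$, and for $g_k$ there is in addition the overall factor $|q_k|$ producing a $\partial_s|q_k|/|q_k|$ multiple of $g_k(w)$. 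Combining $\partial_s a_k(w-w^T)=\partial_s a_k\,w^N$ (and $\partial_s b_k(w-w^T)=\partial_s b_k\,w^N$) yields exactly the stated formulas for $\nabla_s f_k$ and $\nabla_s g_k$.

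Next I would differentiate the first-order formulas once more. Setting $A:=\nabla_s(w^T)$, one has $\nabla_s A=\nabla_s^2(w^T)=\langle w,\nabla_s^2 v_k\rangle v_k+2\langle w,\nabla_s v_k\rangle\nabla_s v_k+\langle w,v_k\rangle\nabla_s^2 v_k$, again using $\nabla_s w=0$. Differentiating $\partial_s a_k\,w^N+(1-a_k)A$, the first piece gives $\partial_s^2 a_k\,w^N-\partial_s a_k\,A$ and the second gives $-\partial_s a_k\,A+(1-a_k)\nabla_s A$, whose sum $\partial_s^2 a_k\,w^N-2\partial_s a_k\,A+(1-a_k)\nabla_s A$ is the claimed expression for $\nabla_s^2 f_k$. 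For $\nabla_s^2 g_k$ the same procedure applied to $\partial_s|q_k|/|q_k|\cdot g_k(w)+|q_k|\partial_s b_k\,w^N+|q_k|(2-b_k)A$ produces: from the first term $\partial_s\!\bigl(\partial_s|q_k|/|q_k|\bigr)g_k(w)+\partial_s|q_k|/|q_k|\cdot\nabla_s g_k(w)$; from the second $(\partial_s|q_k|\partial_s b_k+|q_k|\partial_s^2 b_k)w^N-|q_k|\partial_s b_k\,A$; and from the third, using $\partial_s(|q_k|(2-b_k))=\partial_s|q_k|(2-b_k)-|q_k|\partial_s b_k$, the terms $(\partial_s|q_k|(2-b_k)-|q_k|\partial_s b_k)A+|q_k|(2-b_k)\nabla_s A$. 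Regrouping the two $A$-multiples into $(\partial_s|q_k|(2-b_k)-2|q_k|\partial_s b_k)A$ gives the stated formula verbatim.

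The computation is entirely mechanical and there is no genuine obstacle: the only thing to watch is the bookkeeping of the cross terms from the product rule applied to the triple products $\langle w,v_k\rangle v_k$ and to the products of scalar coefficients with $w^T$, together with the systematic use of $\nabla_s w=0$. Once the "test against a parallel field" reduction is in place and $\nabla_s(w^T)$, $\nabla_s^2(w^T)$ are recorded, each of the four identities follows in a line or two; and since $w\in T_{x_k(s)}M$ is arbitrary and both sides are linear in $w$, these pointwise identities are precisely the asserted endomorphism-valued formulas for $\nabla_s f_k$, $\nabla_s g_k$, $\nabla_s^2 f_k$, $\nabla_s^2 g_k$.
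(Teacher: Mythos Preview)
Your proof is correct and follows essentially the same approach as the paper: both use the Leibniz rule $\nabla_s(f_k(w))=\nabla_sf_k(w)+f_k(\nabla_sw)$ (and its iterate) together with the formula $\nabla_s(w^T)=(\nabla_sw)^T+\langle w,\nabla_sv_k\rangle v_k+\langle w,v_k\rangle\nabla_sv_k$, after which everything is straightforward differentiation of the scalar coefficients $a_k,b_k,|q_k|$. Your device of testing against a parallel field $w$ (so that $\nabla_sw\equiv 0$ and the Leibniz rule collapses) is a standard and convenient way to organize the computation, but it is not a different argument---the paper simply states the general Leibniz identities and declares the rest ``simple calculation''.
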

\begin{proof}
For any vector field $s\mapsto w(s)\in T_{x_k(s)}M$ along $s\mapsto x_k(s)$ we have by definition
\begin{align*}
&\nabla_s\big(f_k(w)\big) = \nabla_sf_k(w) + f_k(\nabla_sw), \\
&\nabla_s\big(g_k(w)\big) = \nabla_sg_k(w) + g_k(\nabla_sw),\\
&\nabla_s^2\big( f_k(w) \big)= \nabla_s^2f_k(w) + 2\, \nabla_sf_k(\nabla_sw) + f_k(\nabla_s^2 w),\\
&\nabla_s^2\big( g_k(w) \big) = \nabla_s^2g_k(w) + 2\, \nabla_sg_k(\nabla_sw) + g_k(\nabla_s^2w).
\end{align*}
Noticing that $\nabla_s({w}^T)=(\nabla_sw)^T + \langle w,\nabla_sv_k\rangle v_k + \langle w,v_k\rangle \nabla_sv_k$ and $\nabla_s(w^N)=\nabla_sw - \nabla_s({w}^T)$, the formulas given in Lemma \ref{lem:ns} result from simple calculation.
\end{proof}
Using these functions, we can deduce the covariant derivatives of $f_k^{(-)}$ and $g_k^{(-)}$. Denoting by $\gamma_k(s)$ the geodesic of $M$ linking $x_k(s)$ to $x_{k+1}(s)$ for all $s\in[0,1]$ and $0\leq k \leq n-1$, we have the following result.
\begin{lemma}
\label{lemfun}
The covariant derivatives of the functions $f_k^{(-)}$ and $g_k^{(-)}$ with respect to $s$ are functions $T_{x_{k+1}(s)}M \rightarrow T_{x_k(s)}M$ given by
\begin{align*}
\nabla_s\big(f_k^{(-)}\big) &: w \mapsto (\nabla_sf_k)^{(-)}(w) + f_k\big(\mathcal R\left(Y_k, \tau_k \right)({w_{k+1}}^\parallel)\big),\\
\nabla_s\big(g_k^{(-)}\big) &: w \mapsto (\nabla_sg_k)^{(-)}(w) + g_k\big(\mathcal R\left(Y_k, \tau_k \right)({w_{k+1}}^\parallel)\big),
\end{align*}
where 
\begin{align}
&(\nabla_sf_k)(s)^{(-)} = \nabla_sf_k(s) \circ P^{x_{k+1}(s),x_k(s)}_{\gamma_k(s)},\nonumber\\
&(\nabla_sg_k)(s)^{(-)} = \nabla_sg_k(s) \circ P^{x_{k+1}(s),x_k(s)}_{\gamma_k(s)},\nonumber\\
&Y_k = ({x_k}')^T + b_k ({x_k}')^N + \tfrac{1}{2}{\nabla_s\tau_k}^T + K\frac{1-a_k}{|\tau_k|^2} \,{\nabla_s\tau_k}^N,\label{yk}
\end{align}
if $K$ is the sectional curvature of the base manifold.
\end{lemma}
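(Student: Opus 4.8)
The key observation is that $f_k^{(-)}$ and $g_k^{(-)}$ differ from $f_k$ and $g_k$ only through the parallel transport $P^{x_{k+1},x_k}_{\gamma_k}$ along the \emph{moving} geodesic $s\mapsto\gamma_k(s)$, so the statement reduces, via the product rule for covariant $s$-differentiation of a composition of $s$-families of linear maps, to one computation: the covariant $s$-derivative of parallel transport along $\gamma_k$. To organise it I would set $c(s,u):=\gamma_k(s)(u)=\exp_{x_k(s)}\!\big(u\,\tau_k(s)\big)$ for $u\in[0,1]$, so that $c(s,0)=x_k(s)$, $c(s,1)=x_{k+1}(s)$, the field $c_u$ is parallel along each small geodesic with $c_u(s,0)=\tau_k(s)$, and $c_s(s,\cdot)=:J_k(s)$ is a Jacobi field along $\gamma_k(s)$ satisfying $J_k(s)(0)=x_k'(s)$, $J_k(s)(1)=x_{k+1}'(s)$ and, by symmetry of the mixed covariant derivatives $\nabla_uc_s=\nabla_sc_u$, also $\nabla_uJ_k(s)(0)=\nabla_s\tau_k(s)$.

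First I would fix $s$, pick a vector field $s\mapsto w(s)\in T_{x_{k+1}(s)}M$, and extend it to $\tilde w(s,u):=P^{x_{k+1}(s),c(s,u)}_{\gamma_k(s)}w(s)$, which is parallel along each small geodesic, i.e. $\nabla_u\tilde w=0$, with $\tilde w(s,1)=w(s)$ and $\tilde w(s,0)=w_{k+1}^\parallel=:V(s)$. Combining $\nabla_u\tilde w=0$ with the curvature identity $\nabla_u\nabla_s\tilde w-\nabla_s\nabla_u\tilde w=\mathcal R(c_u,c_s)\tilde w$ gives the linear ODE $\nabla_u\big(\nabla_s\tilde w\big)=\mathcal R(c_u,c_s)\tilde w$ along each geodesic; integrating it in $u$ from $1$ down to $0$, with boundary value $\nabla_s\tilde w(s,1)=\nabla_sw(s)$, yields
\begin{equation*}
\nabla_sV=(\nabla_sw)^\parallel-\int_0^1\big(\mathcal R(c_u,c_s)\tilde w\big)(s,u)^{u,0}\,\mathrm du .
\end{equation*}
In constant sectional curvature, parallel transport is an isometry intertwining $\mathcal R$, and $c_u$, $\tilde w$ back-transport to $\tau_k$, $V$, while $c_s(s,u)=J_k(s)(u)$ back-transports to the vector given by Lemma \ref{lemjacobi}; hence the integrand equals $\mathcal R\big(\tau_k,J_k(s)(u)^{u,0}\big)(V)$, and pulling the integral inside $\mathcal R$ by bilinearity, then using antisymmetry in the first two slots, turns the correction into $\mathcal R\big(Y_k,\tau_k\big)(w_{k+1}^\parallel)$ with $Y_k:=\int_0^1 J_k(s)(u)^{u,0}\,\mathrm du$. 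Plugging in the expression of Lemma \ref{lemjacobi} (the underlying geodesic has speed $|\tau_k|$), using $J_k(s)(0)=x_k'$ and $\nabla_uJ_k(s)(0)=\nabla_s\tau_k$, and evaluating the elementary integrals $\int_0^1\tilde a_k(u)\,\mathrm du=b_k$, $\int_0^1 u\,\mathrm du=\tfrac12$ and $\int_0^1\tilde b_k(u)\,\mathrm du=K(1-a_k)/|\tau_k|^2$ in the hyperbolic and spherical cases — the flat case being moot since then $\mathcal R\equiv0$ — identifies $Y_k$ with \eqref{yk}.

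It then remains to assemble the pieces: writing $f_k^{(-)}(w)=f_k(V)$ with $V=P^{x_{k+1},x_k}_{\gamma_k}w$, expanding $\nabla_s\big(f_k(V)\big)=(\nabla_sf_k)(V)+f_k(\nabla_sV)$ (the same product rule used in Lemma \ref{lem:ns}), substituting $\nabla_sV=(\nabla_sw)^\parallel+\mathcal R(Y_k,\tau_k)(w_{k+1}^\parallel)$, and recognising $(\nabla_sf_k)(V)=(\nabla_sf_k)^{(-)}(w)$ and $f_k\big((\nabla_sw)^\parallel\big)=f_k^{(-)}(\nabla_sw)$, comparison with the defining relation $\nabla_s\big(f_k^{(-)}(w)\big)=\big(\nabla_sf_k^{(-)}\big)(w)+f_k^{(-)}(\nabla_sw)$ gives the announced formula for $\nabla_s\big(f_k^{(-)}\big)$; the argument for $g_k^{(-)}$ is word-for-word identical with $g_k$ in place of $f_k$. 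I expect the delicate step to be the covariant differentiation of parallel transport along the moving geodesic — fixing the sign of the curvature term and the slot in which the Jacobi field sits — together with the identity $\nabla_uJ_k(s)(0)=\nabla_s\tau_k$, which is exactly what feeds Lemma \ref{lemjacobi} and produces the precise coefficients in \eqref{yk}.
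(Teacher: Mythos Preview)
Your proposal is correct and follows essentially the same approach as the paper's own proof: both introduce the family of geodesics $\gamma_k(s,\cdot)$, extend $w$ to a field parallel along each geodesic, use the curvature identity $\nabla_u\nabla_s-\nabla_s\nabla_u=\mathcal R(c_u,c_s)$ together with $\nabla\mathcal R=0$ to reduce the $s$-derivative of parallel transport to an integral of $\mathcal R(\tau_k,\,J_k^{u,0})$ against the Jacobi field $J_k=c_s$, then invoke Lemma~\ref{lemjacobi} and integrate the coefficients to obtain $Y_k$, before assembling the result via the product rule. The only cosmetic differences are notational (your $\tilde w$ and $V$ play the role of the paper's $V(s,t)$, and you integrate ``from $1$ down to $0$'' rather than from $0$ to $1$), and you are explicit about the flat case being vacuous since $\mathcal R\equiv 0$ there.
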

\begin{proof} The proof is given in Appendix A. \end{proof}

\subsubsection{Geodesic equations and exponential map}

With these notations, we can characterize the geodesics for metric $G^n$. The geodesic equations can be derived in a similar way as in the continuous case, that is by searching for the critical points of the energy \eqref{disenergy}. We obtain the following characterization in terms of the SRV representation \eqref{SRVdis}.
\begin{proposition}[Discrete geodesic equations]
\label{prop:geodeq}
A path $s \mapsto \alpha(s) = \left( x_0(s), \hdots, x_n(s) \right)$ in $M^{n+1}$ is a geodesic for metric $G^n$ if and only if its SRV representation $s\mapsto \big(x_0(s), (q_k(s))_{k}\big)$ verifies the following differential equations
\begin{equation}
\label{disgeodeq}
\begin{aligned}
&\nabla_s{x_0}' + \frac{1}{n} \Big( R_0 + f_0^{(-)}(R_1)  \\
&\hspace{7em}+ \hdots + f_0^{(-)}\circ \cdots \circ f_{n-2}^{(-)} (R_{n-1})\Big)  = 0, \\
&\nabla_s^2q_k + \frac{1}{n} \,\,g_k^{(-)}\Big( R_{k+1} + f_{k+1}^{(-)}(R_{k+2}) \\
& \hspace{7em}+ \hdots + f_{k+1}^{(-)} \circ \cdots \circ f_{n-2}^{(-)}(R_{n-1})\Big) = 0,
\end{aligned}
\end{equation}
for all $k=0, \hdots, n-1$, with the notations \eqref{not} and $R_k := \mathcal R(q_k,\nabla_sq_k){x_k}'$.
\end{proposition}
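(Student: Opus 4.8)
The plan is to obtain these equations exactly as in the smooth case, i.e. as the Euler--Lagrange equations of the discrete energy $E^n$ of \eqref{disenergy}, viewing $s\mapsto\alpha(s)$ as a path in the finite-dimensional manifold $M^{n+1}$. I would take a variation $(a,s)\mapsto\alpha(a,s)=\big(x_0(a,s),\dots,x_n(a,s)\big)$ with $\alpha(0,\cdot)=\alpha$ and variation fields $h_k(s):=\partial_a x_k(a,s)\big|_{a=0}\in T_{x_k(s)}M$, $h_k(0)=h_k(1)=0$ for all $k$, and extend the quantities $\tau_k,q_k$ of \eqref{not} to this two-parameter family. Differentiating under the integral sign,
\begin{equation*}
\tfrac{d}{da}\Big|_{a=0}E^n(\alpha(a,\cdot))=\int_0^1\Big(\langle\nabla_a x_0',x_0'\rangle+\tfrac1n\sum_{k=0}^{n-1}\langle\nabla_a\nabla_s q_k,\nabla_s q_k\rangle\Big)\,\mathrm ds ,
\end{equation*}
so the whole proof amounts to rewriting the right-hand side as $-\int_0^1\sum_{k=0}^n\langle h_k,A_k\rangle\,\mathrm ds$ for vectors $A_k\in T_{x_k}M$ and reading off the conditions $A_k=0$ (which is equivalent to $\alpha$ being a geodesic, since the $h_k$ are arbitrary and independent).

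For the first term I would use torsion-freeness, $\nabla_a x_0'=\nabla_a\partial_s x_0=\nabla_s\partial_a x_0=\nabla_s h_0$, and integrate by parts in $s$, the boundary term vanishing since $h_0(0)=h_0(1)=0$, producing $-\langle h_0,\nabla_s x_0'\rangle$. In each summand I would first commute the two covariant derivatives acting on the vector field $q_k$ along the surface $(a,s)\mapsto x_k(a,s)$: $\nabla_a\nabla_s q_k=\nabla_s\nabla_a q_k+\mathcal R(h_k,x_k')q_k$; the curvature piece becomes, by the pair symmetry and the skew-symmetry of $\mathcal R$, a term of the form $\pm\langle R_k,h_k\rangle$ with $R_k=\mathcal R(q_k,\nabla_s q_k)x_k'$ (the sign being fixed by consistency with the smooth equations \eqref{contgeodeq}), while the remaining $\langle\nabla_s\nabla_a q_k,\nabla_s q_k\rangle$ is integrated by parts in $s$ into $-\langle\nabla_a q_k,\nabla_s^2 q_k\rangle$, the boundary term again vanishing because $\nabla_a q_k$ is built out of $h_k$ and $h_{k+1}$. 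The key input is the expression of $\nabla_a q_k$ in terms of the point variations: the restriction to the geodesic segment joining $x_k$ to $x_{k+1}$ of the $a$-derivative of the path is a Jacobi field with endpoint values $h_k$ and $h_{k+1}$, so Lemma~\ref{lemjacobi} applied at $t=1$ (equivalently, the computation in the proof of the alternative form of $G^n$, with $a$ in place of $s$) gives $\nabla_a\tau_k=(h_{k+1}^{\parallel}-h_k)^T+b_k^{-1}(h_{k+1}^{\parallel}-a_k h_k)^N$ with $a_k,b_k$ as in \eqref{akbk}, and hence $\nabla_a q_k=\tfrac n{|q_k|}\big((\nabla_a\tau_k)^N+\tfrac12(\nabla_a\tau_k)^T\big)$ by the same computation as for $\nabla_s q^w$ there.

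Substituting this and regrouping, the field $h_k$ occurs only in the $k$-th curvature term, directly inside $\nabla_a q_k$, and as $h_k^{\parallel}$ inside $\nabla_a q_{k-1}$; transporting all the $x_{k-1}$-based contributions back to $x_k$ along the geodesic $\gamma_{k-1}$ and observing that $w\mapsto\tfrac1{|q_j|}(\tfrac{a_j}{b_j}w^N+\tfrac12 w^T)$ and $w\mapsto\tfrac1{|q_j|}(b_j^{-1}w^N+\tfrac12 w^T)$ are precisely the maps turning $-\tfrac1n g_j^{(-)}(\cdot)$ into $-\tfrac1n f_j^{(-)}(\cdot)$ and into (the parallel transport of) $-\tfrac1n(\cdot)$ respectively, the conditions $A_n=0,A_{n-1}=0,\dots,A_1=0$ become, after writing $\nabla_s^2 q_k=-\tfrac1n g_k^{(-)}(T_k)$, the recursion $T_{n-1}=0$, $T_{k-1}=R_k+f_k^{(-)}(T_k)$; solving it yields $T_k=R_{k+1}+f_{k+1}^{(-)}(R_{k+2})+\cdots+f_{k+1}^{(-)}\!\circ\cdots\circ f_{n-2}^{(-)}(R_{n-1})$, which is the second family of equations, and $A_0=0$ then reads $\nabla_s x_0'+\tfrac1n\big(R_0+f_0^{(-)}(T_0)\big)=0$, i.e. the first one. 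The only real obstacle is this last bookkeeping step: identifying the operators produced by the integrations by parts with $f_k^{(-)}$ and $g_k^{(-)}$, carefully tracking base points and parallel transports between consecutive tangent spaces, and checking that the resulting triangular system telescopes into the stated nested compositions; the curvature commutation and the Jacobi-field identity are, by contrast, immediate once $M$ has constant sectional curvature.
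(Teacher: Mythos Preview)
Your proposal is correct. The setup (fixed-endpoint variation, torsion-freeness, the curvature commutator on $q_k$, integration by parts in $s$, and the Jacobi-field identity from Lemma~\ref{lemjacobi}) is exactly what the paper does. The difference lies in the last step. You keep the point variations $h_k=\partial_a x_k$ as the independent test vectors, express $\nabla_a q_k$ in terms of $h_k$ and $h_{k+1}^{\parallel}$, collect the coefficient $A_k$ of each $h_k$, and then solve the resulting coupled triangular system via the recursion $T_{k-1}=R_k+f_k^{(-)}(T_k)$ after substituting $\nabla_s^2q_k=-\tfrac1n g_k^{(-)}(T_k)$; this is valid and yields the stated equations. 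The paper instead goes the other way: it uses the same Jacobi-field relation to express $\partial_a x_k$ in terms of $(\partial_a x_0,\nabla_a q_0,\dots,\nabla_a q_{k-1})$ and treats \emph{these} as the independent variations, so that the coefficients of $\partial_a x_0$ and of each $\nabla_a q_k$ in the first variation are directly the two families of equations \eqref{disgeodeq}, with no triangular system to untangle. Your route is arguably more natural if one has not anticipated that the SRV coordinates diagonalise the problem, but it is precisely the bookkeeping you flag as ``the only real obstacle'' that the paper's choice of coordinates sidesteps entirely.
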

\begin{proof} The proof is given in Appendix B.\end{proof}
\begin{remark}[Link with the continuous setting]
Let $[0,1]\ni s\mapsto c(s,\cdot)\in \mathcal M$ be a $C^1$ path of smooth curves and $[0,1]\ni s\mapsto \alpha(s)\in M^{n+1}$ the discretization of size $n$ of $c$. We denote as usual by $q:=c_t/|c_t|^{1/2}$ and $(q_k)_k$ their respective SRV representations. When $n\rightarrow \infty$ and $|\tau_k|\rightarrow 0$ while $n|\tau_k|$ stays bounded for all $0\leq k\leq n$, the coefficients of the discrete geodesic equation \eqref{disgeodeq} for $\alpha$ converge to the coefficients of the continuous geodesic equation \eqref{contgeodeq} for $c$, i.e.
\begin{align*}
&\nabla_s{x_0}'(s) = - r_0(s) + o(1),\\
&\nabla_s^2q_k(s) = - |q_k(s)| (r_{k}(s) + r_{k}(s)^T) +o(1),
\end{align*}
for all $s\in[0,1]$ and $k=0,\hdots, n-1$, where $r_{n-1}=0$ and for $k = 1,\hdots, n-2$,
\begin{equation*}
r_k(s) := \frac{1}{n} \!\sum_{\ell=k+1}^{n-1} \!\!P_c^{\frac{l}{n},\frac{k}{n}}\big( \mathcal R(q, \nabla_sq)c_s(s,\tfrac{\ell}{n})\big) \,\underset{n\to\infty}{\rightarrow} \, r(s,\tfrac{k}{n}),
\end{equation*}
with the exception that the sum starts at $\ell=0$ for $r_0$. More details on this can be found in Appendix B.
\end{remark}
\begin{remark}[Euclidean case and existence of geodesics]\label{rk:l2geod}
Just as in the continuous case, when $M=\mathbb R^d$, the curvature terms $R_k$'s vanish and we obtain
\begin{equation*}
{x_0}''(s)=0,\quad q_k''(s)=0, \,\, k=0,\hdots,n-1, \quad \forall s\in[0,1],
\end{equation*}
i.e. the geodesics are composed of straight lines in the SRV coordinates. We can again avoid the problem of the $q_k(s)$'s going through zero by allowing two consecutive components $x_k$ and $x_{k+1}$ to be equal, and setting $q_k=0$ when that happens. In that case, we get a complete finite-dimensional manifold, which is by the Hopf-Rinow theorem geodesically complete, i.e. any two curves can be linked by a minimizing geodesic. Indeed, since the SRV coordinates of geodesics are straight lines, a sequence in $(\mathbb R^d)^{n+1}$ converges if and only if its SRV coordinates $x_0, q_0, \hdots, q_{n-1}$ converge in $\mathbb R^d$ (the sequence subscript is omited), and so the completeness of $(\mathbb R^d)^{n+1}$ follows from that of $\mathbb R^d$. The question of whether this property still holds in the non flat case is postponed to future work.
\end{remark}
Using equations \eqref{disgeodeq} we can now build the exponential map, that is, an algorithm allowing us to approximate the geodesic of $M^{n+1}$ starting from a point $(x^0_0, \hdots, x^0_n)\in M^{n+1}$ at speed $(u_0, \hdots, u_n)$ with $u_k \in T_{x_k}M$ for all $k=0,\hdots, n$. In other words, we are looking for a path $[0,1] \ni s\mapsto \alpha(s) = (x_0(s), \hdots, x_n(s))$ such that $x_k(0)=x^0_k$ and ${x_k}'(0) = u_k$ for all $k$, and that verifies the geodesic equations \eqref{disgeodeq}. Assume that we know at time $s\in[0,1]$ the values of $x_k(s)$ and ${x_k}'(s)$ for all $k=0,\hdots,n$. Then we propagate using
\begin{align*}
x_k(s+\epsilon) &= \log_{x_k(s)} \epsilon {x_k}'(s), \\
{x_k}'(s+\epsilon) &= \left({x_k}'(s) + \epsilon \nabla_s{x_k}'(s)\right)^{s,s+\epsilon}.
\end{align*}
In the following proposition, we see how we can compute the acceleration $\nabla_s{x_k}'$ for each $k$. 
\begin{proposition}[Discrete exponential map]
\label{prop:expmap}
Let $[0,1] \ni s\mapsto \alpha(s) = (x_0(s), \hdots, x_n(s))$ be a geodesic path in $M^{n+1}$. For all $s\in [0,1]$, the coordinates of its acceleration $\nabla_s\alpha'(s)$ can be iteratively computed in the following way
\begin{equation*}
\begin{aligned}
&\nabla_s{x_0}' =-  \frac{1}{n} \Big( R_0 + f_0^{(-)}(R_1)\\
&\hspace{7em} + \hdots + f_0^{(-)}\circ \cdots \circ f_{n-2}^{(-)} (R_{n-1})\Big),\\
&\nabla_s{x_{k+1}}'^\parallel = \nabla_sf_k({x_k}') + f_k(\nabla_s{x_k}') + \frac{1}{n} \nabla_sg_k(\nabla_sq_k) \\
&\hspace{7em}+ \frac{1}{n} g_k(\nabla_s^2q_k) +\mathcal R( \tau_k,Y_k)({x_{k+1}}'^\parallel),
\end{aligned}
\end{equation*}
for $k=0,\hdots,n-1$, where the $R_k$'s are defined as in Proposition \ref{prop:geodeq}, the symbol $\cdot^\parallel$ denotes the parallel transport from $x_{k+1}(s)$ back to $x_k(s)$ along the geodesic linking them, the maps $\nabla_sf_k$ and $\nabla_sg_k$ are given by Lemma \ref{lem:ns}, $Y_k$ is given by Equation \eqref{yk} and
\begin{equation*}
\begin{aligned}
& \nabla_s\tau_k = (D_\tau \alpha')_k, \quad \nabla_sv_k = \frac{1}{|\tau_k|} \left( \nabla_s\tau_k - {\nabla_s\tau_k}^T \right),\\
&\nabla_sq_k = \sqrt{\frac{n}{|\tau_k|}} \left( \nabla_s\tau_k - \frac{1}{2}{\nabla_s\tau_k}^T \right),\\
&\nabla_s^2q_k = - \frac{1}{n} \,\,g_k^{(-)}\Big( R_{k+1} + f_{k+1}^{(-)}(R_{k+2}) \\
&\hspace{7em}+ \hdots + f_{k+1}^{(-)} \circ \cdots \circ f_{n-2}^{(-)}(R_{n-1})\Big).
\end{aligned}
\end{equation*}
\end{proposition}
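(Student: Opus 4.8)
The plan is to obtain every formula in the proposition by reducing the computation of $\nabla_s\alpha'(s)$ to a single identity and differentiating it once in $s$. Several of the formulas come for free: the equation for $\nabla_s{x_0}'$ is exactly the first of the discrete geodesic equations \eqref{disgeodeq}, the one for $\nabla_s^2q_k$ is the second of \eqref{disgeodeq} rearranged, and the expressions for $\nabla_s\tau_k$, $\nabla_sv_k$ and $\nabla_sq_k$ follow by covariant differentiation of the definitions \eqref{not}. Indeed $\nabla_s\tau_k=(D_\tau\alpha')_k$ is \eqref{dtaualpha}, and since $\partial_s|\tau_k|=\langle\nabla_s\tau_k,v_k\rangle$, the product rule applied to $v_k=\tau_k/|\tau_k|$ and $q_k=\sqrt n\,\tau_k/\sqrt{|\tau_k|}$ gives the announced $\nabla_sv_k$ and $\nabla_sq_k$. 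The real content is therefore the recursion relating $\nabla_s{x_{k+1}}'$, transported back to $x_k$, to $\nabla_s{x_k}'$, $\nabla_sq_k$ and $\nabla_s^2q_k$.

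First I would establish, for $0\leq k\leq n-1$, the identity
\begin{equation*}
{x_{k+1}}'^\parallel = f_k({x_k}')+\tfrac1n\,g_k(\nabla_sq_k),
\end{equation*}
which is proved exactly as in the proof of Proposition~6: applying Lemma~\ref{lemjacobi} to the Jacobi field $u\mapsto c_s(s,\tfrac{k+u}{n})$ along the geodesic $\gamma_k(s)$ — whose velocity has norm $|\tau_k|$, whose endpoint values are ${x_k}'$ and ${x_{k+1}}'$, and whose covariant derivative at $u=0$ equals $\nabla_s\tau_k$ — yields ${x_{k+1}}'^\parallel=({x_k}')^T+a_k({x_k}')^N+(\nabla_s\tau_k)^T+b_k(\nabla_s\tau_k)^N$, and one then rewrites $(\nabla_s\tau_k)^T+b_k(\nabla_s\tau_k)^N=\tfrac1n g_k(\nabla_sq_k)$ using $\nabla_sq_k=\sqrt{n/|\tau_k|}\,(\nabla_s\tau_k-\tfrac12(\nabla_s\tau_k)^T)$ and $|q_k|^2=n|\tau_k|$.

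The recursion is then obtained by taking the covariant derivative $\nabla_s$ of this identity. On the right-hand side, the product rule together with Lemma~\ref{lem:ns} produces exactly $(\nabla_sf_k)({x_k}')+f_k(\nabla_s{x_k}')+\tfrac1n(\nabla_sg_k)(\nabla_sq_k)+\tfrac1n g_k(\nabla_s^2q_k)$. On the left-hand side the key subtlety is that $\nabla_s({x_{k+1}}'^\parallel)$ is \emph{not} $(\nabla_s{x_{k+1}}')^\parallel$, because the geodesic $\gamma_k(s)$ along which the transport is taken moves with $s$; the defect between the two is a curvature term, which is precisely $\mathcal R(\tau_k,Y_k)({x_{k+1}}'^\parallel)$ with $Y_k$ as in \eqref{yk}. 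This correction has in fact already been computed in the proof of Lemma~\ref{lemfun}: the term $f_k(\mathcal R(Y_k,\tau_k)({w_{k+1}}^\parallel))$ appearing there is the image under $f_k$ of exactly this defect, arising from the moving parallel transport inside $f_k^{(-)}=f_k\circ P^{x_{k+1},x_k}_{\gamma_k}$. Substituting and solving for $(\nabla_s{x_{k+1}}')^\parallel$ gives the stated formula.

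I expect the only genuinely delicate ingredient to be this variation-of-parallel-transport formula, the heart of Lemma~\ref{lemfun}. Its proof writes ${x_{k+1}}'^\parallel=Z(s,0)$, where $u\mapsto Z(s,u)$ is the field along $\gamma_k(s,u)=\exp_{x_k(s)}(u\tau_k(s))$ that is $u$-parallel with $Z(s,1)={x_{k+1}}'(s)$; from $\nabla_u\nabla_sZ=\mathcal R(\partial_s\gamma_k,\partial_u\gamma_k)Z$, integration in $u$ from $1$ to $0$ expresses $\nabla_sZ(s,0)$ as $(\nabla_s{x_{k+1}}')^\parallel$ plus a curvature correction built from $\mathcal R$, $\tau_k$ and $\int_0^1(\partial_s\gamma_k(s,u))^{u,0}\,\mathrm du$, the constant-curvature identity $\nabla\mathcal R=0$ being used to take $\mathcal R$ outside the integral; Lemma~\ref{lemjacobi} applied to the Jacobi field $u\mapsto\partial_s\gamma_k(s,u)$ (endpoint covariant derivative $\nabla_s\tau_k$) then evaluates this integral in closed form and identifies it with the $Y_k$ of \eqref{yk}. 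Once the recursion is in place, it is worth noting that the scheme is self-consistent with \eqref{disgeodeq}: the geodesic equations supply $\nabla_s{x_0}'$ and all the $\nabla_s^2q_k$, after which $\nabla_s{x_1}',\dots,\nabla_s{x_n}'$ are determined one after another, so the whole acceleration $\nabla_s\alpha'(s)$ is indeed computable from $(\alpha(s),\alpha'(s))$ alone.
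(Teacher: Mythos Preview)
Your proposal is correct and follows essentially the same route as the paper's own proof: establish the identity ${x_{k+1}}'^\parallel=f_k({x_k}')+\tfrac1n g_k(\nabla_sq_k)$ via Lemma~\ref{lemjacobi}, differentiate it in $s$ using Lemma~\ref{lem:ns}, and then invoke the variation-of-parallel-transport identity $(\nabla_s w_{k+1})^\parallel=\nabla_s({w_{k+1}}^\parallel)+\mathcal R(\tau_k,Y_k)({w_{k+1}}^\parallel)$ (equation~\eqref{nswpar} from the proof of Lemma~\ref{lemfun}) to convert $\nabla_s({x_{k+1}}'^\parallel)$ into $(\nabla_s{x_{k+1}}')^\parallel$. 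The remaining formulas are read off from the geodesic equations and the definitions, exactly as you indicate.
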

\begin{proof}The proof is given in Appendix B.\end{proof}
The equations of Proposition \ref{prop:expmap} allow us to iteratively construct a geodesic $s\mapsto \alpha(s)$ in $M^{n+1}$ for metric $G^n$ from the knowledge of its initial conditions $\alpha(0)$ and $\alpha'(0)$. The next step is to construct geodesics under boundary constraints, i.e. to find the shortest path between two elements $\alpha_0$ and $\alpha_1$ of $M^{n+1}$.

\subsubsection{Jacobi fields and geodesic shooting}

As explained in Section \ref{subsubsec:geod} for the continuous model, we solve the boundary value problem using geodesic shooting. To do so, recall that we need to characterize the Jacobi fields for the metric $G^n$, since these play a role in the correction of the shooting direction at each iteration of the algorithm. Recall also that for any geodesic $s\mapsto \alpha(s)$ in $M^{n+1}$ and Jacobi field $s\mapsto J(s)$ along $\alpha$, there exists a family of geodesics $(-\delta, \delta) \ni a \mapsto \alpha(a,\cdot)$ such that $\alpha(0,s)=\alpha(s)$ for all $s$ and 
\begin{equation*}
J(s) = \left.\frac{\partial}{\partial a}\right|_{a=0} \alpha(a,s).
\end{equation*}
\begin{proposition}[Discrete Jacobi fields]
\label{prop:jacobi}
Let $[0,1] \ni s\mapsto \alpha(s) = (x_0(s), \hdots, x_n(s))$ be a geodesic path in $M^{n+1}$, $[0,1]\ni s\mapsto J(s)=(J_0(s),\hdots, J_n(s))$ a Jacobi field along $\alpha$, and $(-\delta, \delta) \ni a \mapsto \alpha(a,\cdot)$ a corresponding family of geodesics, in the sense just described. Then $J$ verifies the second order linear ODE
\begin{align*}
&\nabla_s^2J_0 = \mathcal R({x_0}', J_0){x_0}' - \frac{1}{n}\Big( \nabla_aR_0 + f_0^{(-)}(\nabla_aR_1) + \hdots\\
& +  f_0^{(-)}\circ\cdots\circ f_{n-2}^{(-)}(\nabla_aR_{n-1})\Big)\\
&\hspace{0em} -\frac{1}{n} \sum_{k=0}^{n-2} \sum_{\ell = 0}^k f_0^{(-)}\circ \cdots \circ\nabla_a\big(f_\ell^{(-)}\big)\circ\cdots \circ f_{k}^{(-)}(R_{k+1}),\\
&{\nabla_s^2J_{k+1}}^\parallel = f_k(\nabla_s^2J_k) + 2 \nabla_sf_k(\nabla_sJ_k) + \nabla_s^2f_k(J_k) \\
&+ \frac{1}{n}  g_k(\nabla_s^2\nabla_aq_k)+ \frac{2}{n} \nabla_sg_k(\nabla_s\nabla_aq_k)+ \frac{1}{n}\nabla_s^2g_k(\nabla_aq_k)\\
&+ 2\mathcal R(\tau_k, Y_k)({\nabla_sJ_{k+1}}^\parallel) +\mathcal R(\nabla_s\tau_k,Y_k)({J_{k+1}}^\parallel) \\
&+ \mathcal R(\tau_k,\nabla_sY_k)({J_{k+1}}^\parallel) +\mathcal R(\tau_k,Y_k)\Big(\mathcal R(Y_k,\tau_k)({J_{k+1}}^\parallel)\Big), 
\end{align*}
for all $0\leq k \leq n-1$, where $R_k := \mathcal R(q_k,\nabla_sq_k){x_k}'$ and the various covariant derivatives according to $a$ can be expressed as functions of $J$ and $\nabla_sJ$, 
\begin{align*}
& \nabla_aR_k = \mathcal R\big(\nabla_aq_k,\nabla_sq_k\big){x_k}' + \mathcal R\big(q_k,\nabla_s\nabla_aq_k \\
&+ \mathcal R(J,{x_k}')q_k \big){x_k}' + \mathcal R\big(q_k,\nabla_sq_k)\nabla_sJ_k,\\
&\nabla_aq_k = \sqrt{\frac{n}{|\tau_k|}} \left( \nabla_a\tau_k - \frac{1}{2}{\nabla_a\tau_k}^T \right),\quad \nabla_a\tau_k = (D_\tau J)_k, \\
&\nabla_av_k = \frac{1}{|\tau_k|} \left( \nabla_a\tau_k - {\nabla_a\tau_k}^T \right),\\
&\nabla_s\nabla_aq_k = n \, {g_k}^{-1}\big( (\nabla_sJ_{k+1})^\parallel + \mathcal R(Y_k,\tau_k)({J_{k+1}}^\parallel) \\
&- \nabla_sf_k(J_k) - f_k(\nabla_sJ_k) \big)  + n \, \nabla_s\big({g_k}^{-1}\big)\big({J_{k+1}}^\parallel - f_k(J_k)\big), \\
&\nabla_s^2\nabla_aq_k = - \frac{1}{n} \!\sum_{\ell=k+1}^{n-1} g_k^{(-)} \circ f_{k+1}^{(-)}\circ\cdots\circ f_{\ell-1}^{(-)}(\nabla_aR_\ell)\\
&+\mathcal R(\nabla_s{x_k}',J_k)q_k + \mathcal R({x_k}',\nabla_sJ_k)q_k + 2\mathcal R({x_k}',J_k)\nabla_sq_k \\
&- \frac{1}{n}\! \sum_{\ell=k+1}^{n-1} \!\sum_{j = k}^{\ell-1} g_k^{(-)}\circ \cdots \circ\nabla_a\big(f_j^{(-)}\big)\circ\cdots \circ f_{\ell-1}^{(-)}(R_\ell),\\
&\nabla_sY_k = (\nabla_s{x_k}')^T + b_k(\nabla_s{x_k}')^N+ \partial_sb_k ({x_k}')^N\\
& + (1-b_k)\big(\langle {x_k}',\nabla_sv_k\rangle v_k \langle {x_k}',v_k\rangle \nabla_sv_k\big) +\tfrac{1}{2}(\nabla_s^2\tau_k)^T \\
&+ K\frac{1-a_k}{|\tau_k|^2}(\nabla_s^2\tau_k)^N + \partial_s\Big(K\frac{1-a_k}{|\tau_k|^2}\Big)(\nabla_s\tau_k)^N \\
&+\Big(\tfrac{1}{2}-K\frac{1-a_k}{|\tau_k|^2}\Big)(\langle \nabla_s\tau_k,\nabla_sv_k\rangle v_k + \langle \nabla_s\tau_k,v_k\rangle \nabla_sv_k),
\end{align*}
with the notation conventions $f_{k+1}^{(-)} \circ \hdots \circ f_{k-1}^{(-)}:=Id$, $\sum_{\ell=n}^{n-1}:=0$ and with the maps
\begin{align*}
&\nabla_a\big(f_k^{(-)}\big)(w)=(\nabla_af_k)^{(-)}(w) + f_k\Big(\mathcal R(Z_k,\tau_k)({w_{k+1}}^\parallel)\Big),\\
&\nabla_a\big(g_k^{(-)}\big)(w)=(\nabla_ag_k)^{(-)}(w) + g_k\Big(\mathcal R(Z_k,\tau_k)({w_{k+1}}^\parallel)\Big),\\
&\nabla_s\big({g_k}^{-1}\big)(w) = \partial_s{|q_k|}^{-1} |q_k| {g_k}^{-1}(w) +\! |q_k|^{-1}\partial_s(b_k^{-1})\, {w}^N\\
&\hspace{1em}+ |q_k|^{-1} \, \big(1/2 - b_k^{-1}\big)\big(\langle w,\nabla_sv_k\rangle v_k + \langle w,v_k\rangle\nabla_sv_k\big),
\end{align*}
and
\begin{equation*}
Z_k = {J_k}^T + b_k {J_k}^N + \tfrac{1}{2}{\nabla_a\tau_k}^T + K\frac{1-a_k}{|\tau_k|^2}{\nabla_a\tau_k}^N.
\end{equation*}
\end{proposition}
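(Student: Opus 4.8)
The plan is to obtain the Jacobi equation by differentiating, with respect to the variation parameter $a$, the discrete geodesic equations already available from Propositions \ref{prop:geodeq} and \ref{prop:expmap}. Fix a family of geodesics $a\mapsto\alpha(a,\cdot)$ with $\alpha(0,\cdot)=\alpha$ and $\partial_a\alpha(0,\cdot)=J$, so that all the auxiliary quantities $\tau_k,v_k,q_k,R_k,f_k,g_k,f_k^{(-)},g_k^{(-)},Y_k$ now depend on $a$ as well; the first task is to express their $\nabla_a$-derivatives through $J$ and $\nabla_sJ$. Writing $\gamma_k(a,u)=\exp_{x_k(a)}(u\tau_k(a))$, the geodesic $\gamma_k(a,\cdot)$ joins $x_k(a)$ to $x_{k+1}(a)$ with initial velocity $\tau_k(a)$, so $\partial_a\partial_u=\partial_u\partial_a$ shows that the variation field $u\mapsto\partial_a\gamma_k(0,u)$ is a Jacobi field along $\gamma_k:=\gamma_k(0,\cdot)$ with boundary values $J_k,J_{k+1}$ and initial covariant derivative $\nabla_a\tau_k$; Lemma \ref{lemjacobi} then gives, exactly as in the proof of Proposition 6, both $\nabla_a\tau_k=(D_\tau J)_k$ and the identity ${J_{k+1}}^\parallel=f_k(J_k)+\tfrac1n g_k(\nabla_aq_k)$ --- the $a$-analogue of the relation underlying Propositions \ref{prop:geodeq}--\ref{prop:expmap}. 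From $\nabla_a\tau_k=(D_\tau J)_k$ and the chain rule applied to $q_k=\sqrt n\,\tau_k/\sqrt{|\tau_k|}$ and $v_k=\tau_k/|\tau_k|$ one reads off $\nabla_aq_k$ and $\nabla_av_k$; differentiating $R_k=\mathcal R(q_k,\nabla_sq_k){x_k}'$ in $a$, using that $M$ has constant curvature (hence $\nabla\mathcal R=0$), that $\nabla_a{x_k}'=\nabla_sJ_k$, and the commutation $\nabla_a\nabla_sq_k=\nabla_s\nabla_aq_k+\mathcal R(J_k,{x_k}')q_k$, gives $\nabla_aR_k$; and the $a$-versions of Lemmas \ref{lem:ns}--\ref{lemfun}, obtained verbatim with $s$ replaced by $a$, ${x_k}'$ by $J_k$ and $Y_k$ by the vector $Z_k$ of the statement, give $\nabla_af_k,\nabla_ag_k,\nabla_a(f_k^{(-)}),\nabla_a(g_k^{(-)})$; finally, differentiating the explicit inverse $g_k^{-1}:w\mapsto|q_k|^{-1}(\tfrac12 w^T+b_k^{-1}w^N)$ in $s$ yields $\nabla_s(g_k^{-1})$.

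The second task is to produce $\nabla_s\nabla_aq_k$, $\nabla_s^2\nabla_aq_k$ and $\nabla_sY_k$. Solving the identity above for $\nabla_aq_k=n\,g_k^{-1}({J_{k+1}}^\parallel-f_k(J_k))$ and differentiating once in $s$ --- using the Leibniz rules of Lemma \ref{lem:ns} together with the commutator between $\nabla_s$ and parallel transport along the $s$-varying geodesic $\gamma_k$, namely $\nabla_s({w_{k+1}}^\parallel)=(\nabla_sw_{k+1})^\parallel+\mathcal R(Y_k,\tau_k)({w_{k+1}}^\parallel)$, which is precisely the term that produced $Y_k$ in Lemma \ref{lemfun} --- gives the stated expression for $\nabla_s\nabla_aq_k$. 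For $\nabla_s^2\nabla_aq_k$ I would instead differentiate the second discrete geodesic equation $\nabla_s^2q_k=-\tfrac1n g_k^{(-)}\big(R_{k+1}+f_{k+1}^{(-)}(R_{k+2})+\cdots\big)$ with respect to $a$: commuting $\nabla_a$ past $\nabla_s^2$ produces the curvature correction $\mathcal R(\nabla_sJ_k,{x_k}')q_k+\mathcal R(J_k,\nabla_s{x_k}')q_k+2\mathcal R(J_k,{x_k}')\nabla_sq_k$ (again $\nabla\mathcal R=0$), while applying the Leibniz rule to the nested composition --- each factor $R_\ell$ contributing $\nabla_aR_\ell$, each factor $f_j^{(-)}$ contributing $\nabla_a(f_j^{(-)})$ --- produces the double sum, and rearranging gives the formula of the statement. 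The vector $\nabla_sY_k$ is just the $s$-derivative of \eqref{yk}, computed with the same Leibniz rules and the known formulas for $\nabla_sv_k$, $\partial_sa_k$, $\partial_sb_k$, $\nabla_s\tau_k$ and $\nabla_s^2\tau_k$.

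Finally, the two displayed equations follow by differentiating in $a$ the two equations characterising discrete geodesics. For the first, $\nabla_a\nabla_s{x_0}'=\nabla_s^2J_0+\mathcal R(J_0,{x_0}'){x_0}'$, and this equals $-\tfrac1n$ times the $\nabla_a$-derivative of the nested composition $\sum_k f_0^{(-)}\circ\cdots\circ f_{k-1}^{(-)}(R_k)$; expanding the latter by Leibniz (one term with $\nabla_aR_k$, a double sum with $\nabla_a(f_\ell^{(-)})$) and moving $\mathcal R(J_0,{x_0}'){x_0}'=-\mathcal R({x_0}',J_0){x_0}'$ to the right-hand side gives the equation for $J_0$. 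For the second, I would differentiate the identity ${J_{k+1}}^\parallel=f_k(J_k)+\tfrac1n g_k(\nabla_aq_k)$ twice in $s$: by the operator Leibniz rule the right-hand side expands into $f_k(\nabla_s^2J_k)+2\nabla_sf_k(\nabla_sJ_k)+\nabla_s^2f_k(J_k)+\tfrac1n\big(g_k(\nabla_s^2\nabla_aq_k)+2\nabla_sg_k(\nabla_s\nabla_aq_k)+\nabla_s^2g_k(\nabla_aq_k)\big)$ once the expressions for $\nabla_aq_k,\nabla_s\nabla_aq_k,\nabla_s^2\nabla_aq_k$ from the previous paragraph are substituted, while applying $\nabla_s({w}^\parallel)=(\nabla_sw)^\parallel+\mathcal R(Y_k,\tau_k)({w}^\parallel)$ twice to the left-hand side and using $\nabla\mathcal R=0$ turns ${J_{k+1}}^\parallel$ into ${(\nabla_s^2J_{k+1})}^\parallel$ plus exactly the four curvature corrections $2\mathcal R(\tau_k,Y_k)({\nabla_sJ_{k+1}}^\parallel)$, $\mathcal R(\nabla_s\tau_k,Y_k)({J_{k+1}}^\parallel)$, $\mathcal R(\tau_k,\nabla_sY_k)({J_{k+1}}^\parallel)$ and $\mathcal R(\tau_k,Y_k)\big(\mathcal R(Y_k,\tau_k)({J_{k+1}}^\parallel)\big)$ of the statement. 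No geometric input beyond Lemmas \ref{lemjacobi}--\ref{lemfun} is needed; the genuine difficulty is the bookkeeping --- keeping track of the signs and arguments of the many curvature terms generated by the non-commuting operators $\nabla_s$ and $\nabla_a$ and by differentiation of parallel transport along the varying geodesics $\gamma_k$, and correctly applying the Leibniz rule to the deeply nested operator compositions appearing in the discrete geodesic equations.
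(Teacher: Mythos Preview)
Your proof is correct and follows essentially the same route as the paper's own argument: differentiate the discrete geodesic equations \eqref{disgeodeq} in $a$ to obtain the equation for $\nabla_s^2J_0$ and the expression for $\nabla_s^2\nabla_aq_k$, differentiate the identity ${J_{k+1}}^\parallel=f_k(J_k)+\tfrac1n g_k(\nabla_aq_k)$ twice in $s$ to obtain the equation for ${(\nabla_s^2J_{k+1})}^\parallel$, and use throughout the commutator $\nabla_s({w_{k+1}}^\parallel)=(\nabla_sw_{k+1})^\parallel+\mathcal R(Y_k,\tau_k)({w_{k+1}}^\parallel)$ together with $\nabla\mathcal R=0$ and the Leibniz rules of Lemmas \ref{lem:ns}--\ref{lemfun}. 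The order in which you derive the auxiliary quantities and the way you organise the bookkeeping match the paper's proof almost step by step.
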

\begin{proof}The proof is given in Appendix B.\end{proof}
The equations of Proposition \ref{prop:jacobi} allow us to iteratively compute the Jacobi field $J$ along a geodesic $\alpha$ - and in particular, its end value $J(1)$ - from the knowledge of the initial conditions $\{J_k(0), 0\leq k\leq n\}$ and $\{\nabla_sJ_k(0), 0\leq k\leq n\}$. Indeed, if at time $s\in[0,1]$ we have $J_k(s)$ and $\nabla_sJ_k(s)$ for all $k=0,\hdots,n$, then we can propagate using
\begin{align*}
J_k(s+\epsilon) &= \big( J_k(s) + \epsilon \nabla_sJ_k(s) \big)^{x_k,x_{k+1}},\\
\nabla_sJ_k(s+\epsilon) &= \big( \nabla_sJ_k(s) + \epsilon \nabla_s^2J_k(s) \big)^{x_k,x_{k+1}},
\end{align*}
where $\nabla_s^2J_k(s)$ is deduced from $\nabla_s^2J_{k-1}(s)$ using Proposition \ref{prop:jacobi}. 
We can now apply Algorithm \ref{alg:geodshoot}, where we replace the smooth geodesic equations \eqref{contgeodeq} by the discrete geodesic equations \eqref{disgeodeq} and we solve them using the exponential map described in Proposition \ref{prop:expmap}. Notice that in $M^{n+1}$, the $k^{th}$ component of the $L^2$-logarithm map between two elements $\alpha_0=(x_0^0,\hdots,x_n^0)$ and $\alpha_1=(x_0^1,\hdots,x_n^1)$ is given by $\log_{x^0_k}(x^1_k)$.
\begin{algo}[Discrete geodesic shooting] \label{alg:geodshootdis}
\leavevmode\par \noindent
Input: $\alpha_0=(x^0_0,\hdots,x^0_n), \alpha_1=(x^1_0,\hdots,x^1_n)$.\\
Initialization: $w =\log_{\alpha_0}^{L^2}(\alpha_1)$.\\
Repeat until convergence :
\begin{enumerate}
\item compute the geodesic $s\mapsto \alpha(s)$ starting from $\alpha_0$ at speed $w$ using Proposition \ref{prop:expmap},
\item evaluate the difference $j := \log_{\alpha(1)}^{L^2}(\alpha_1)$ between the target curve $\alpha_1$ and the extremity $\alpha(1)$ of the obtained geodesic,
\item compute the initial derivative $\nabla_sJ(0)$ of the Jacobi field $s\mapsto J(s)$ along $\alpha$ verifying $J(0)=0$ and $J(1)=j$ using Proposition \ref{prop:jacobi},
\item correct the shooting direction $w = w + \nabla_sJ(0)$.
\end{enumerate}
Output : geodesic $\alpha(s)$.
\end{algo}
Recall that the map $\varphi : T_{\alpha(0)}\mathcal M \rightarrow T_{\alpha(1)}\mathcal M$, $\nabla_sJ(0) \mapsto J(1)$ associating to the initial derivative $\nabla_sJ(0)$ of a Jacobi field with initial value $J(0)=0$ its end value $J(1)$, is a linear bijection between two vector spaces which can be obtained using Proposition \ref{prop:jacobi}. Its inverse map can be computed by considering the image of a basis of $T_{c(0)}\mathcal M$.

\subsection{A discrete analog of unparameterized curves}

The final step in building our discrete model is to introduce a discretization of the quotient shape space. There seems to be no natural, intrinsic definition of the shape of a discrete curve, as by definition we are lacking information : we only have access to a finite number $n+1$ of points. Therefore we will make the assumption that we know the equations of the underlying curves, that is, that for each discrete curve $\alpha$, we have access to the shape $\bar \alpha$ of the smooth curve of which $\alpha$ is the discretization. In practice, we can set $\bar \alpha$ to be the shape of an optimal interpolation. The goal, for two elements $\alpha_0,\alpha_1$ of shapes $\overline{\alpha_0}, \overline{\alpha_1}$, is to redistribute the $n+1$ points on $\overline{\alpha_1}$ to minimize the discrete distance to the $n+1$ points $\alpha_0$ on $\overline{\alpha_0}$, and obtain
\begin{equation}
\label{alphaopt}
\alpha_1^{opt} = \text{argmin} \{ d_n(\alpha_0, \alpha)\,|\, \alpha\text{ has shape }\overline{c_1} \},
\end{equation}
where $d_n$ is the geodesic distance associated to the discrete metric $G^n$. We approximate $\alpha_1^{opt}$ using Algorithm \ref{alg:optmatch}, i.e. by iteratively computing the "horizontal part" of the geodesic linking $\alpha_0$ to an iteratively improved discretization of $\overline{\alpha_1}$. Since there is no "discrete shape bundle", we simply define the vertical and horizontal spaces in $\alpha$ as the discrete analogs of the ones of the smooth case
\begin{align*}
&\text{Ver}^n_\alpha := \{ mv :m=(m_k)_k\in \mathbb R^{n+1}, m_0=m_n=0\}, \\
&\text{Hor}^n_\alpha :=\{ h\in T_\alpha M^{n+1}: G^n(h,mv)=0\\
&\hspace{7em} \forall m=(m_k)_k\in \mathbb R^{n+1}, m_0=m_n=0\},
\end{align*}
where $v=(v_k)_k$ is still defined by \eqref{not}. Similarly to the continuous case, we can show the following result.
\begin{proposition}[Discrete horizontal space]
\label{prop:hordis}
Let $\alpha\in M^{n+1}$ and $h\in T_\alpha M^{n+1}$. Then $h\in \text{Hor}^n_\alpha$ if and only if it verifies
\begin{align*}
\big\langle(D_\tau h)_k,v_k\big\rangle-4\frac{|\tau_k|}{|\tau_{k-1}|}&\Big\langle(D_\tau h)_{k-1},b_{k-1}^{-1}{v_k}^\parallel\\
&+(\tfrac{1}{4}-b_{k-1}^{-1})\lambda_{k-1}v_{k-1}\Big\rangle=0.
\end{align*}
with the notation $\lambda_k :=\langle v_{k+1}^\parallel, v_k\rangle$. Any tangent vector $w\in T_\alpha M^{n+1}$ can be uniquely decomposed into a sum $w=w^{ver}+w^{hor}$ where $w^{ver}=mv\in \text{Ver}_\alpha^n$, $w^{hor}=w-mv \in \text{Hor}_\alpha^n$ and the components $(m_k)_k$ verify $m_0=m_1=0$ and the following recurrence relation
\begin{equation}\label{mk}
A_k m_{k+1} + B_k m_k + C_k m_{k-1} = D_k,
\end{equation}
with coefficients
\begin{align*}
A_k &= \lambda_k,\\
B_k &= -1 -4\frac{|\tau_k|}{|\tau_{k-1}|}(b_{k-1}^{-2}+\lambda_{k-1}^2(\tfrac{1}{4}-b_{k-1}^{-2})),\\
C_k &=\frac{|\tau_k|}{|\tau_{k-1}|}\lambda_{k-1},
\end{align*}
\begin{align*}
D_k &=\big\langle (D_\tau w)_k,v_k\big\rangle- 4\frac{|\tau_k|}{|\tau_{k-1}|}\Big(b_{k-1}^{-1}\big\langle (D_\tau w)_{k-1},v_k^\parallel\big\rangle \\
&+ (\tfrac{1}{4}-b_{k-1}^{-1})\lambda_{k-1}\big\langle (D_\tau w)_{k-1},v_{k-1}\big\rangle \Big).
\end{align*}
\end{proposition}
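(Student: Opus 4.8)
The plan is to transcribe, in the discrete setting, the argument used for its smooth counterpart (the characterisation of $\text{Hor}_c$ and the ODE for $m$), replacing integration by parts with summation by parts. By definition $h\in\text{Hor}^n_\alpha$ if and only if $G^n_\alpha(h,mv)=0$ for every $m=(m_k)_k$ with $m_0=m_n=0$, so the first step is to make this bilinear form explicit. I would start from the $D_\tau$-reformulation of $G^n$, note that the boundary term $\langle h_0,(mv)_0\rangle=m_0\langle h_0,v_0\rangle$ vanishes, and then compute $D_\tau(mv)$. Since $D_\tau$ is $\mathbb R$-linear and $(mv)_{k+1}^{\parallel}=m_{k+1}v_{k+1}^{\parallel}$, writing $\lambda_k=\langle v_{k+1}^{\parallel},v_k\rangle$ and splitting with respect to $v_k$ gives $\big(D_\tau(mv)\big)_k^T=(\lambda_k m_{k+1}-m_k)v_k$ and $\big(D_\tau(mv)\big)_k^N=b_k^{-1}m_{k+1}(v_{k+1}^{\parallel})^N$; this is the only place where the curvature-dependent coefficients $b_k$ of \eqref{akbk} intervene, and no geometry of $M$ beyond what is already encoded in $D_\tau$ is needed.

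The second step is the discrete summation by parts. Substituting the above into $G^n_\alpha(h,mv)=\sum_{k=0}^{n-1}\big(\langle (D_\tau h)_k^N,(D_\tau(mv))_k^N\rangle+\tfrac{1}{4}\langle (D_\tau h)_k^T,(D_\tau(mv))_k^T\rangle\big)|\tau_k|^{-1}$ and reindexing the $m_{k+1}$-terms via $j=k+1$ (the $j=0$ and $j=n$ contributions dropping since $m_0=m_n=0$), the quantity becomes a linear form $\sum_{j=1}^{n-1} m_j\,c_j(h)$ in the free components $m_1,\dots,m_{n-1}$. Horizontality is then equivalent to $c_j(h)=0$ for all $j$, and using $\langle (D_\tau h)_k^N,(v_{k+1}^{\parallel})^N\rangle=\langle (D_\tau h)_k,v_{k+1}^{\parallel}\rangle-\lambda_k\langle (D_\tau h)_k,v_k\rangle$ one checks that, after multiplication by $-4|\tau_j|$, the equation $c_j(h)=0$ is exactly $\langle (D_\tau h)_j,v_j\rangle-4\tfrac{|\tau_j|}{|\tau_{j-1}|}\langle (D_\tau h)_{j-1},\,b_{j-1}^{-1}v_j^{\parallel}+(\tfrac{1}{4}-b_{j-1}^{-1})\lambda_{j-1}v_{j-1}\rangle=0$, which is the claimed description of $\text{Hor}^n_\alpha$.

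For the decomposition I would apply this characterisation to $h=w-mv$. By linearity $(D_\tau h)_k=(D_\tau w)_k-(D_\tau(mv))_k$; collecting the $w$-dependent terms produces the right-hand side $D_k$, while the $m$-dependent terms, using $\langle (D_\tau(mv))_k,v_k\rangle=\lambda_k m_{k+1}-m_k$ together with $\langle v_k,v_k^{\parallel}\rangle=\lambda_{k-1}$ and $|v_k^{\parallel}|=1$ (hence $\langle (v_k^{\parallel})^N,v_k^{\parallel}\rangle=1-\lambda_{k-1}^2$), reorganise — after the elementary identity $\tfrac{1}{4}\lambda_{k-1}^2+b_{k-1}^{-2}(1-\lambda_{k-1}^2)=b_{k-1}^{-2}+\lambda_{k-1}^2(\tfrac{1}{4}-b_{k-1}^{-2})$ — into the three-term recurrence $A_k m_{k+1}+B_k m_k+C_k m_{k-1}=D_k$ with precisely the stated coefficients. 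Since $G^n$ is positive definite and $\text{Ver}^n_\alpha$ is finite-dimensional, the orthogonal splitting $T_\alpha M^{n+1}=\text{Ver}^n_\alpha\oplus\text{Hor}^n_\alpha$ exists and is unique; equivalently, this recurrence together with the prescribed boundary values of $(m_k)_k$ has a unique solution, which then defines $w^{ver}=mv$ and $w^{hor}=w-mv$.

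The argument carries no conceptual surprise — it is the verbatim discrete analog of the smooth proof — so the main obstacle is purely computational: keeping track of the tangential/normal decompositions with respect to $v_k$ while $v_{k+1}^{\parallel}$ is \emph{not} collinear with $v_k$ (all the $\lambda_k$ corrections), performing the reindexing of the summation by parts without spurious endpoint terms at $k=0$ and $k=n-1$ (where $m_0=m_n=0$ make the boundary contributions vanish), and massaging the resulting scalars so that they land exactly on $A_k,B_k,C_k,D_k$. A minor preliminary point is to confirm that $D_\tau$ is linear and that the count of scalar equations matches the number of free components $m_1,\dots,m_{n-1}$, so that the associated tridiagonal system is square and well posed.
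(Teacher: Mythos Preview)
Your proposal is correct and follows essentially the same approach as the paper: compute $(D_\tau(mv))_k^T$ and $(D_\tau(mv))_k^N$, expand $G^n_\alpha(h,mv)$, reindex the $m_{k+1}$-terms using $m_0=m_n=0$, and read off the horizontality condition as the vanishing of each coefficient of $m_k$; the recurrence for the decomposition then follows by applying this to $w-mv$. Your added remark on existence and uniqueness via positive definiteness of $G^n$ and finite-dimensionality of $\text{Ver}^n_\alpha$ is a small but welcome clarification that the paper leaves implicit.
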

\begin{proof}
Let $h \in T_\alpha\mathcal M$ be a tangent vector. It is horizontal if and only if it is orthogonal to any vertical vector, that is any vector of the form $mv$ with $m=(m_k)_k\in \mathbb R^{n+1}$ such that $m_0=m_n=0$. Recall that by definition
\begin{equation*}
(D_\tau w)_k := ({w_{k+1}}^\parallel - w_k)^T + b_k^{-1}({w_{k+1}}^\parallel - a_kw_k)^N,
\end{equation*}
and so with the notation $\lambda_k:=\langle {v_{k+1}}^\parallel,v_k\rangle$, we get
\begin{align*}
(D_\tau (mv))_k^T &= m_{k+1}({v_{k+1}}^\parallel)^T - m_kv_k \\
&= (m_{k+1}\lambda_k - m_k) v_k,\\
(D_\tau (mv))_k^N &= b_k^{-1}m_{k+1}({v_{k+1}}^\parallel)^N\\
&=b_k^{-1}m_{k+1}({v_{k+1}}^\parallel-\lambda_k v_k).
\end{align*}
The scalar product between $h$ and $mv$ is then
\begin{align*}
G^n_\alpha(h,mv) &= \sum_{k=0}^{n-1} \Big( b_k^{-1} m_{k+1} \big\langle (D_\tau h)_k, {v_{k+1}}^\parallel-\lambda_k v_k \big\rangle \\
+& \tfrac{1}{4} (m_{k+1}\lambda_k - m_k) \big\langle(D_\tau h)_k, v_k \big\rangle \Big) |\tau_k|^{-1}\\
&= \sum_{k=0}^{n-1} \frac{m_{k+1}}{|\tau_k|} \Big( b_k^{-1} \big\langle (D_\tau h)_k, {v_{k+1}}^\parallel-\lambda_k v_k \big\rangle \\
+& \tfrac{1}{4} \lambda_k \big\langle(D_\tau h)_k, v_k \big\rangle \Big) \!-\!\tfrac{1}{4} \sum_{k=0}^{n-1}\frac{m_{k}}{|\tau_k|}\big\langle(D_\tau h)_k, v_k \big\rangle.
\end{align*}
Changing the indices in the first sum and taking into account that $m_0=m_n=0$, we obtain
\begin{align*}
&\sum_{k=1}^{n-1}m_k\Big(|\tau_{k-1}|^{-1}\big\langle (D_\tau h)_{k-1}, b_{k-1}^{-1}{v_k}^\parallel \\
&+(\tfrac{1}{4}-b_{k-1}^{-1})\lambda_{k-1}v_{k-1}\big\rangle - \tfrac{1}{4}|\tau_k|^{-1}\big\langle (D_\tau h)_k,v_k \big\rangle \Big)=0.
\end{align*}
Since this is true for all such $m$ the summand is equal to zero for all $k$ and we get the desired equation. The decomposition of a tangent vector $w$ into a vertical part $mv$ and a horizontal part $w-mv$ with $m=(m_k)_k\in \mathbb R^{n+1}$ such that $m_0=m_n=0$, is then simply characterized by the fact that $w-mv$ verifies this equation.
\end{proof}

In the discrete case, soving the ODE \eqref{mtt} to find the horizontal part of a vector simply boils down to solving the recurrence relation \eqref{mk}, allowing us to compute the coefficients of the PDE \eqref{phicont}. Now we present an algorithm to solve a discrete version of this PDE and compute the discrete analog of the horizontal part of a (discrete) path of discrete curves.
\begin{algo}[Horizontal part of a path]\label{alg:hordis}
\leavevmode\par \noindent
Input : $\alpha(s)=(x_0(s),\hdots,x_n(s))$, $s=0,\frac{1}{m},\hdots,1$.\\
Initialization : 
for $k=0,\hdots,n$,
\begin{align*}
\varphi(0)(\tfrac{k}{n}) = \tfrac{k}{n},\quad x_k^{hor}(0)=x_k(0).
\end{align*}
For $j=0,\hdots,m-1$,
\begin{enumerate}
\item set $s=j/m$, $m_0(s)=m_{n+1}(s)=0$ and solve
\begin{equation*}
\left(\begin{matrix} B_1 & A_1 & \cdots & 0\\
C_2 & \ddots & \ddots & \vdots\\
\vdots & \ddots & \ddots & A_{n-2}\\
0 & \cdots & C_{n-1} & B_{n-1}\end{matrix} \right)\left(\begin{matrix}
m_1\\ \vdots\\ \vdots\\ m_{n-1}\end{matrix}\right) = \left(\begin{matrix}
D_1\\ \vdots\\ \vdots\\ D_{n-1}\end{matrix}\right),
\end{equation*}
\item for $k=0,\hdots,n$,
\begin{align*}
&\Delta\varphi(s)(\tfrac{k}{n}) = \left\{ \begin{aligned} 
&n\big(\varphi(s)(\tfrac{k+1}{n})-\varphi(s)(\tfrac{k}{n})\big)\quad \text{if }m_k\geq 0,\\ 
&n\big(\varphi(s)(\tfrac{k}{n})-\varphi(s)(\tfrac{k-1}{n})\big) \quad \text{if }m_k< 0,
\end{aligned}\right.\\
&\varphi_s(s)(\tfrac{k}{n}) = \frac{m_k(s)}{|n\tau_k(s)|}\Delta \varphi(s)(k),\\
&\varphi(s+\tfrac{1}{m})(\tfrac{k}{n}) = \varphi(s)(\tfrac{k}{n}) + \tfrac{1}{m} \varphi_s(s)(\tfrac{k}{n}),
\end{align*}
\item interpolate between the $\{x_k(s+\tfrac{1}{m}),k=0,\hdots,n\}$ to obtain $N+1$ values $\{y_\ell(s+\tfrac{1}{m}),\ell=0,\hdots,N\}$ and interpolate between the $\{\varphi(s+\tfrac{1}{m})(\tfrac{k}{n}),k=0,\hdots,n\}$ to obtain $N+1$ values $\{\psi(s+\tfrac{1}{m})(\tfrac{\ell}{n}),\ell=0,\hdots,N\}$,
\item for $k=0,\hdots,n$,
\begin{align*}
&\text{find }\ell \text{ s.t. } \tfrac{k}{n}\leq \psi(s+\tfrac{1}{m})(\tfrac{\ell}{N})<\tfrac{k+1}{n},\\
&\text{set } x_k^{hor}(s+\tfrac{1}{m}) = y_\ell(s+\tfrac{1}{m}).
\end{align*}
\end{enumerate}
Output: $\alpha^{hor}(s)=(x_0^{hor}(s),..,x_n^{hor}(s)), s=0,\frac{1}{m},\hdots,1$.
\end{algo}
Step 1 computes the coefficients of the PDE. The $A_k,B_k,C_k,D_k$ are computed using the definitions of Proposition \ref{prop:hordis} for $w=\alpha'(s)$, i.e. $(D_\tau \alpha'(s))_k=\nabla_s\tau_k(s)$. Step 2 solves the PDE. The increment $\Delta\varphi$ is a discretization of the $t$-derivative, and so it is crucial to make it depend on the sign of $m$ in order to follow the characteristic curves. Steps 3 and 4 simply inverse the reparameterization $\varphi$ obtained after step 2 in order to deduce the horizontal part of $\alpha$. It is important to note that for $s=1$, interpolation between the points $\{x_k(s),k=0,\hdots,n\}$ should be achieved so as to remain on the initial shape $\overline{\alpha(1)}$, obtained e.g. by spline interpolation of the points of $\alpha(1)$. Finally, we can perform optimal matching in the discrete case.
\begin{algo}[Discrete optimal matching]\label{alg:optmatchdis}
\leavevmode\par \noindent
Input: $\alpha_0=(x_0^0,\hdots,x_n^0), \alpha_1=(x_0^1,\hdots,x_n^1)$.\\
Initialization : $\tilde \alpha_1 = \alpha_1$.\\
Repeat until convergence :
\begin{enumerate}
\item compute geodesic $s\mapsto \alpha(s)=(x_0(s),\hdots,x_n(s))$ between $\alpha_0$ and $\tilde \alpha_1$ using Algorithm \ref{alg:geodshootdis},
\item compute $s\mapsto \alpha^{hor}(s)=(x_0^{hor}(s),\hdots,x_n^{hor}(s))$ the horizontal part of $\alpha$ using Algorithm \ref{alg:hordis},
\item set $\tilde \alpha_1 = \alpha^{hor}(1)$.
\end{enumerate}
Output : $\alpha_1^{opt} = \tilde \alpha_1$.
\end{algo}

\section{Simulations}				
\label{sec:simu}

We test the optimal matching (OM) algorithm in several settings : for curves in the hyperbolic half-plane $\mathbb H^2$, for curves in $\mathbb R^2$ and $\mathbb R^3$, and for curves on the sphere $\mathbb S^2$. Regarding the geometry of $\mathbb H^2$ and the useful algorithms such as the exponential map and the logarithm map, we refer the reader to \cite{moi17}. Concerning the geometry of $\mathbb S^2$, we have used the same formulas as those given in appendix in \cite{zhang16}.

We start by comparing Algorithm \ref{alg:optmatchdis} to the popular dynamic programming (DP) method, presented e.g. in \cite{srivastava16}. This alternative approach to optimal matching between two curves $c_1$ and $c_2$ considers a discrete grid of $[0,1]\times[0,1]$ and tries to find the optimal non-decreasing path $\varphi(t)$ in that grid that puts into correspondence $c_1(t)$ and $c_2(\varphi(t))$. To do so, at each point $(t_1,t_2)$ of the grid, it tests all the possible matchings between the pieces of curves $c_1([0,t_1])$ and $c_2([0,t_2])$, and keeps only the one that gives the shortest geodesic. This boils down to testing all the non-decreasing paths that lead to this point. Since this is computationally costly, the algorithm only tests the paths going through the points located at the bottom-left in a square of a certain size, as shown in Figure \ref{fig:dp}. Even though the computations are additive, this method requires to compute a large number of geodesics. Results of comparison of Algorithm \ref{alg:optmatchdis} to this method are shown in Figure \ref{fig:dynamicprog} for a pair of curves in $\mathbb R^2$ and three pairs of curves in $\mathbb R^3$. The DP method is carried out for a square of side $s=7$. The first row displays the geodesics between the initial parameterized curves (which are all arc-length parameterized except for the second example) and the second and third rows the horizontal geodesics obtained with the optimal matching and dynamic programing algorithms respectively. We can see that the two methods give very similar results : in the first example, at each extremity, a portion of the circle is matched to almost a single point of the segment, in order to maximize the portions in correspondence where the speeds are positively colinear. In the second example, we consider two curves in $\mathbb R^3$ that are identical modulo translation and parameterization. This difference in the parameterization induces an artificial deformation of the geodesic (first row), which is "straightened out" in the horizontal geodesics given by both methods. The last two examples illustrate the fact that arc-length parameterization does not always yield a relevant matching : in the third example, it seems more natural to put into correspondence the portions of the curves that are "before the turn", and to match together the ones that are "after the turn", as given by both the OM algorithm and DP. The fourth row shows the corresponding optimal matchings, i.e. the optimal reparameterizations $\varphi$ such that $c_1(t)$ is matched to $c_2(\varphi(t))$, in red (OM) and black (DP), and we can check in the fifth row the horizontality of the solutions by looking at the ratio, in norm, of the vertical part of the speed vector of the geodesic divided by its horizontal part. We can see that this ratio is largely reduced from the initial geodesic (dashed black line) to the horizontal ones (full red and black lines). Finally, the lengths of the geodesics of the first three lines of Figure \ref{fig:dynamicprog} are given in Table \ref{fig:table} in the corresponding order, showing that the horizontal geodesics are indeed shorter. The lengths on the first row give the distance between the initial \emph{parameterized} curves, while the lengths on the second and third row yield a distance between the \emph{shapes}. 

To summarize, it seems that both methods give very similar results when tested on the same metric : they both tend to put into correspondence the parts of the curves that have same shape and orientation. However the dynamic programing approach requires the computation of a large number of geodesics between pieces of curves, whereas in the examples shown here (Figures \ref{fig:H2vsR2} to \ref{fig:plane}), the number of iterations required range from 4 to 12, resulting in the same amount of geodesic computations. It should also be noted that it is usually the first iteration that gives the biggest "jump" on the fiber, i.e. that results in the most important reparameterization of the target curve. It could therefore also be used to get a good initialization for some approximate faster method. 

\begin{figure}
\centering
\includegraphics[width=0.68\textwidth]{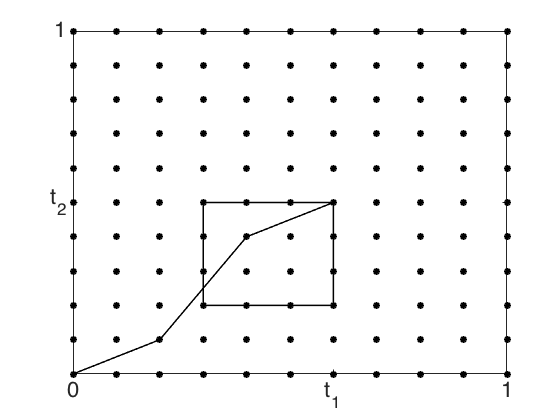}
\caption{Illustration of the dynamic programming algorithm.}
\label{fig:dp}
\end{figure}
\begin{figure}
\centering
\includegraphics[width=0.58\textwidth]{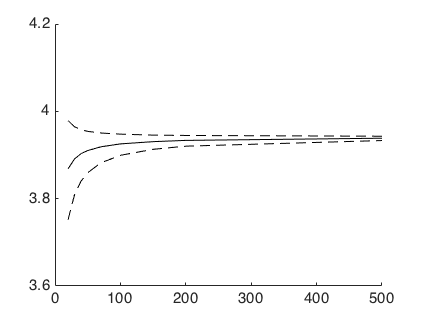}
\caption{Stability of the norm of the speed of a geodesic obtained by geodesic shooting as the number of points used to compute it increases.}
\label{fig:geodshoot}
\end{figure}

\begin{figure*}
\centering
\subfloat{\includegraphics[width=0.25\textwidth]{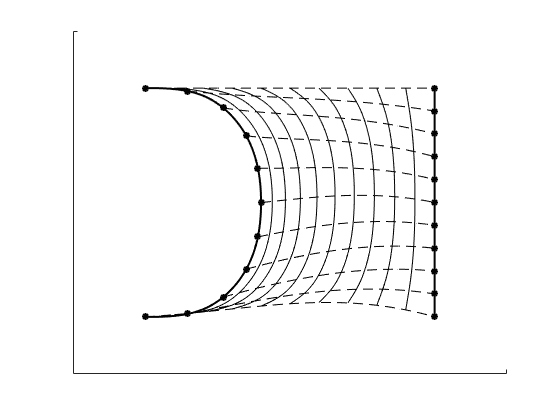}}\hspace*{-1em}
\subfloat{\includegraphics[width=0.3\textwidth]{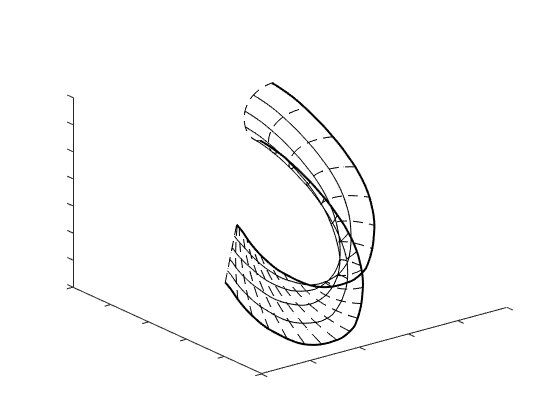}}\hspace*{-3em}
\subfloat{\includegraphics[width=0.3\textwidth]{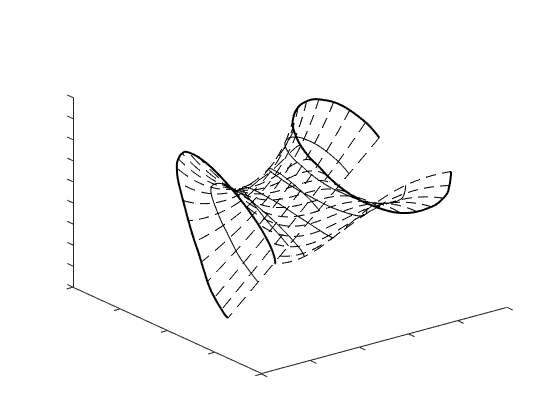}}\hspace*{-3em}
\subfloat{\includegraphics[width=0.3\textwidth]{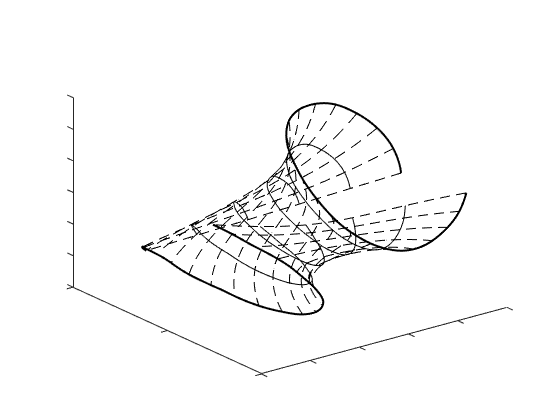}}\vspace*{-1em}\\
\subfloat{\includegraphics[width=0.25\textwidth]{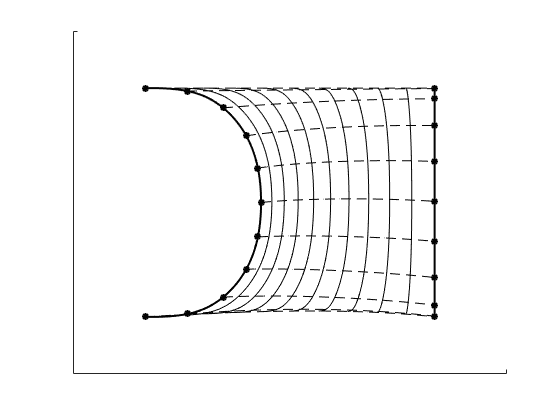}}\hspace*{-1em}
\subfloat{\includegraphics[width=0.3\textwidth]{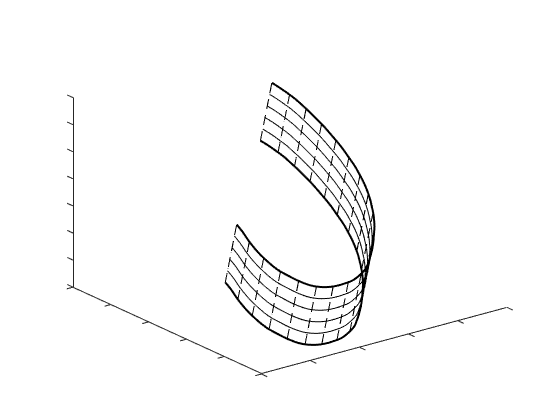}}\hspace*{-3em}
\subfloat{\includegraphics[width=0.3\textwidth]{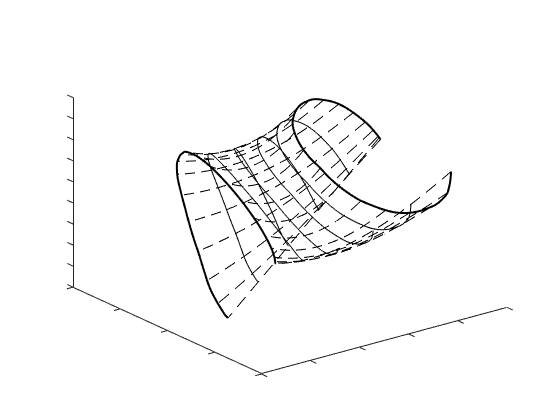}}\hspace*{-3em}
\subfloat{\includegraphics[width=0.3\textwidth]{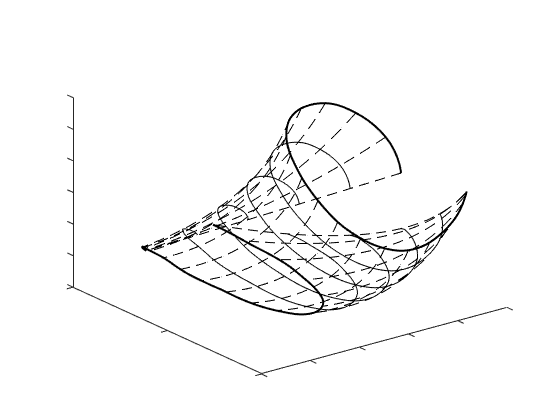}}\vspace*{-1em}\\
\subfloat{\includegraphics[width=0.25\textwidth]{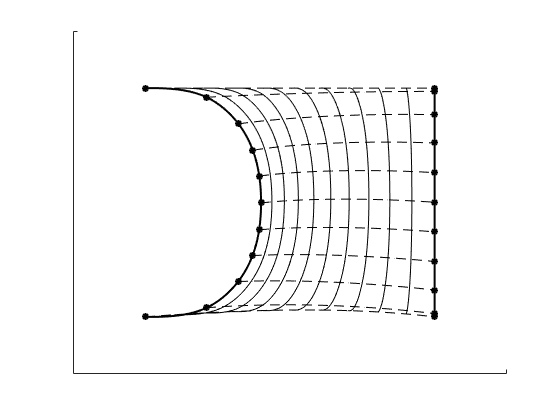}}\hspace*{-1em}
\subfloat{\includegraphics[width=0.3\textwidth]{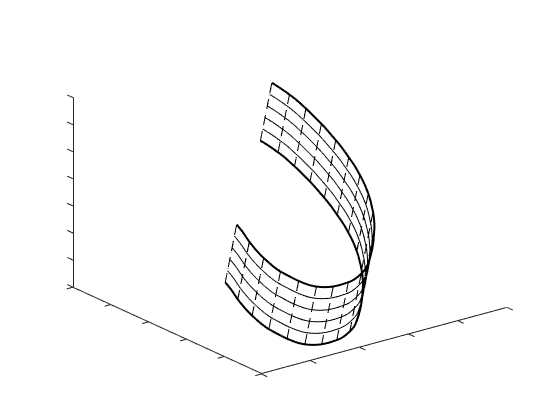}}\hspace*{-3em}
\subfloat{\includegraphics[width=0.3\textwidth]{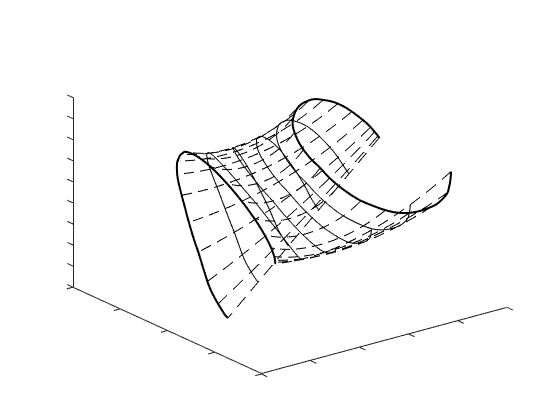}}\hspace*{-3em}
\subfloat{\includegraphics[width=0.3\textwidth]{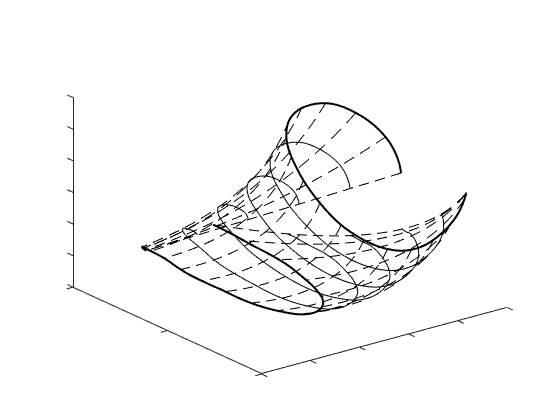}}\vspace*{-1em}\\
\subfloat{\includegraphics[width=0.25\textwidth]{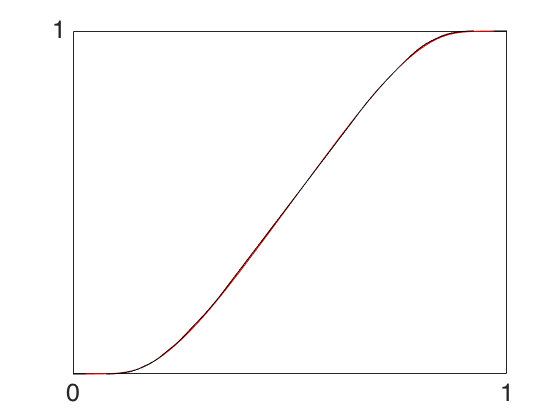}}\hspace*{-0.5em}
\subfloat{\includegraphics[width=0.25\textwidth]{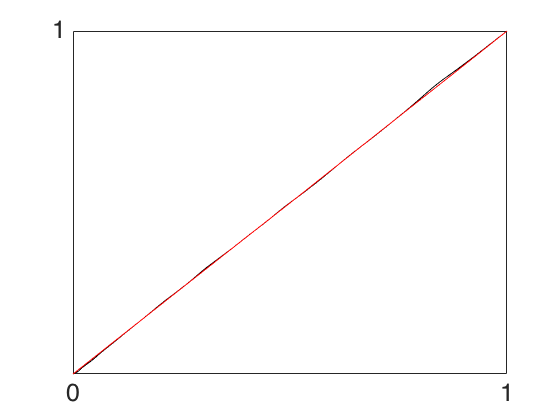}}\hspace*{-0.5em}
\subfloat{\includegraphics[width=0.25\textwidth]{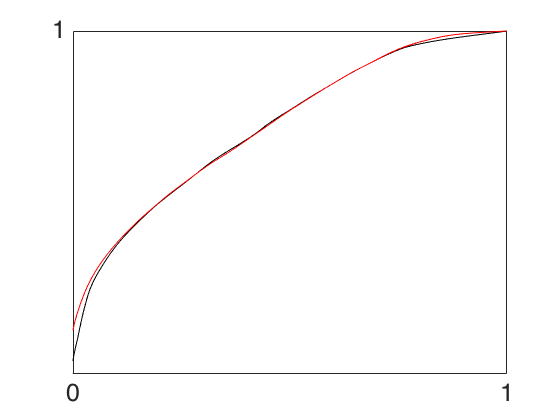}}\hspace*{-0.5em}
\subfloat{\includegraphics[width=0.25\textwidth]{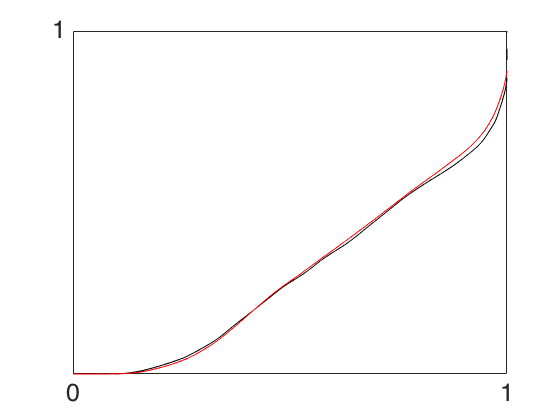}}\vspace*{-0em}\\
\subfloat{\includegraphics[width=0.25\textwidth]{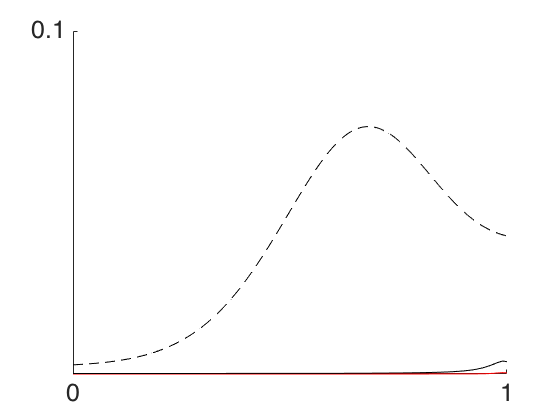}}\hspace*{-0.5em}
\subfloat{\includegraphics[width=0.25\textwidth]{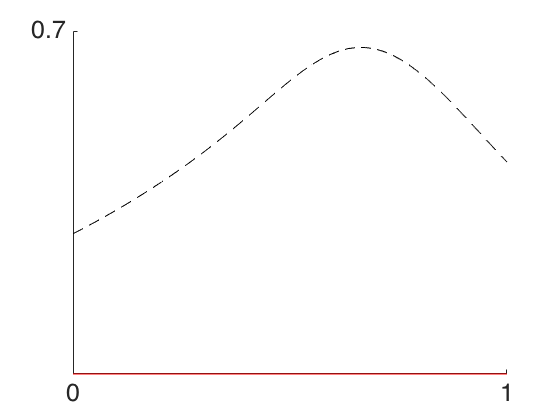}}\hspace*{-0.5em}
\subfloat{\includegraphics[width=0.25\textwidth]{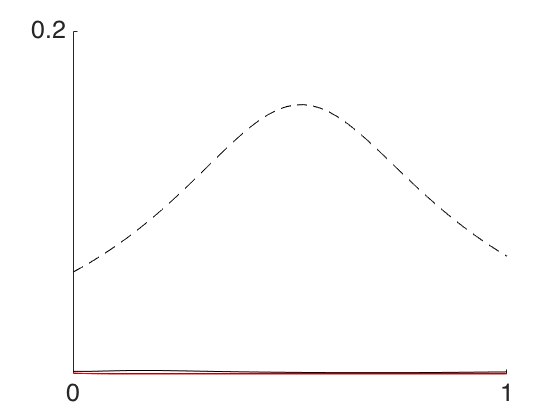}}\hspace*{-0.5em}
\subfloat{\includegraphics[width=0.25\textwidth]{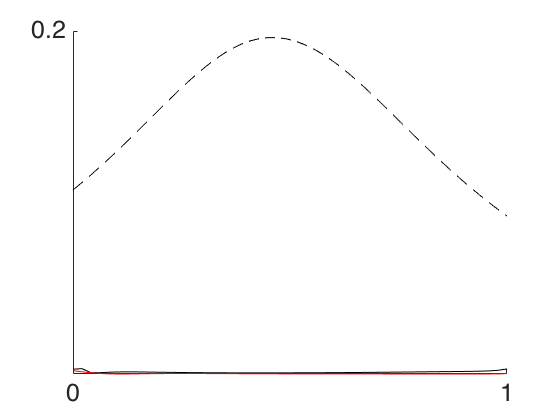}}
\caption{Optimal matching between curves in $\mathbb R^2$ or $\mathbb R^3$ in the SRV framework achieved using Algorithm \ref{alg:optmatchdis} compared to dynamic programming. The first line shows the geodesics between the initial parameterizations, the second line the horizontal geodesics obtained with Algorithm \ref{alg:optmatchdis}, the third line the horizontal geodesics obtained with dynamic programming, the fourth line the optimal matchings for our method (in red) and dynamic programming (in black). The fifth line shows the ratio vertical/horizontal, in norm, of the speed of the initial geodesic (dashed line), the horizontal geodesic obtained with Algorithm \ref{alg:optmatchdis} (red) and the horizontal geodesic obtained with dynamic programming (black).}
\label{fig:dynamicprog}
\end{figure*}

\begin{table*}
\caption{Length of the geodesics shown in the first three lines of Figure \ref{fig:dynamicprog}, displayed in the same order.}
\label{fig:table}
\begin{tabular}{|c|c|c|c|c|c|}
\hline
4.400 & 4.349 & 10.785 &10.478 \\
\hline 
4.300 & 3.000 & 9.680 & 9.305\\
\hline
4.301 & 3.000 & 9.691 & 9.307\\
\hline
\end{tabular}
\end{table*}

We then consider examples where the base manifold has negative curvature. Figure \ref{fig:H2vsR2} shows horizontal geodesics obtained using the OM algorithm as well as the corresponding optimal matchings in $[0,1]\times[0,1]$, for plane curves (first row) and the same curves taking their values in the hyperbolic half-plane (second row). We can see that the geometry of the base manifold significantly influences the optimal matching between two given curves. To evaluate the performance of the geodesic shooting algorithm used to perform OM, we display in Figure \ref{fig:geodshoot} the evolution of the norm of the speed of the geodesic obtained for the two curves on the bottom-right corner of Figure \ref{fig:H2vsR2} (the vertical and horizontal segments of $\mathbb H^2$) as the number of points used to compute the geodesic increases (from 20 to 500). The evolutions of the maximum and minimum values $\max_{s\in[0,1]}|c_s(s)|$, $\min_{s\in[0,1]}|c_s(s)|$ are shown in dashed lines, and that of the mean value with a full line. We can see that the more refined the discretization, the closer we get to a geodesic. It is to be noted that these two segments are the same as those considered in \cite{su17} to illustrate an extension of the SRV framework to curves in Lie groups. Since this metric structure coincides with the one studied here in the planar case, it is not surprising to find similar results for the plane curves. However, the results in $\mathbb H^2$ are quite different. 

\begin{figure*}
\centering
\subfloat{\includegraphics[width=0.25\textwidth]{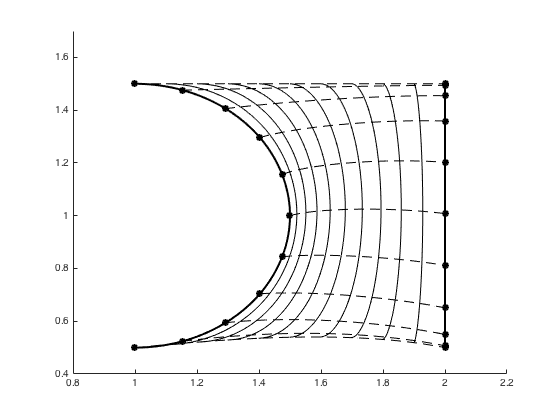}}\hspace*{-1em}
\subfloat{\includegraphics[width=0.25\textwidth]{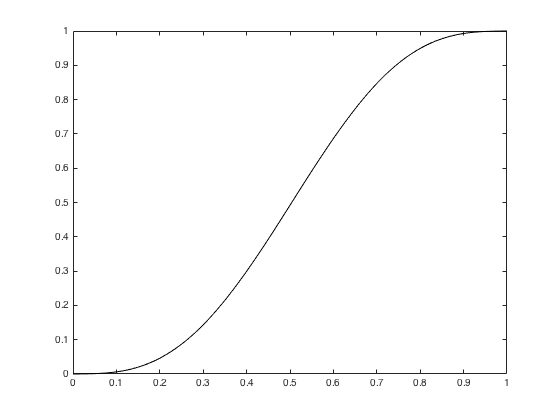}}\hspace*{-1em}
\subfloat{\includegraphics[width=0.25\textwidth]{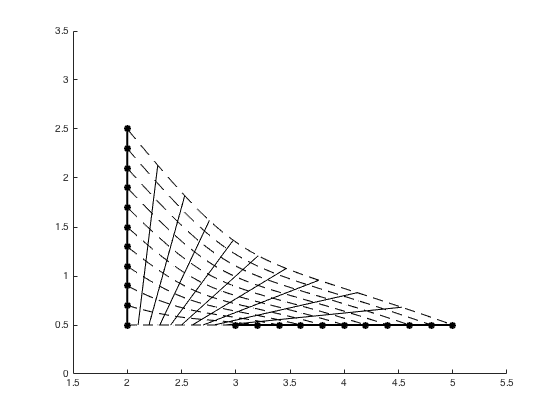}}\hspace*{-1em}
\subfloat{\includegraphics[width=0.25\textwidth]{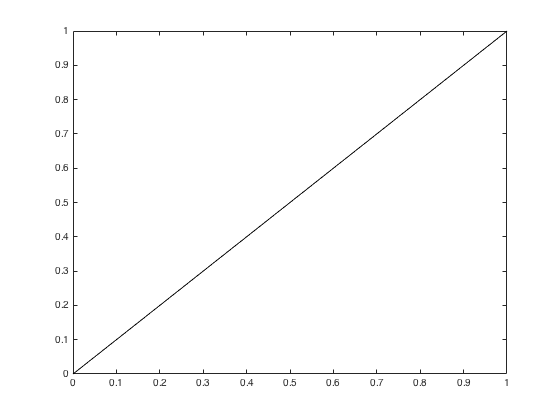}}\\
\subfloat{\includegraphics[width=0.25\textwidth]{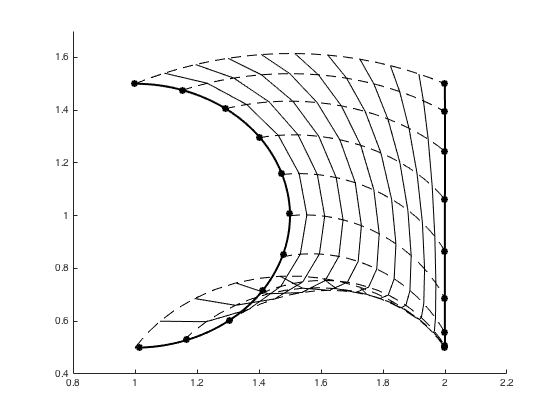}}\hspace*{-1em}
\subfloat{\includegraphics[width=0.25\textwidth]{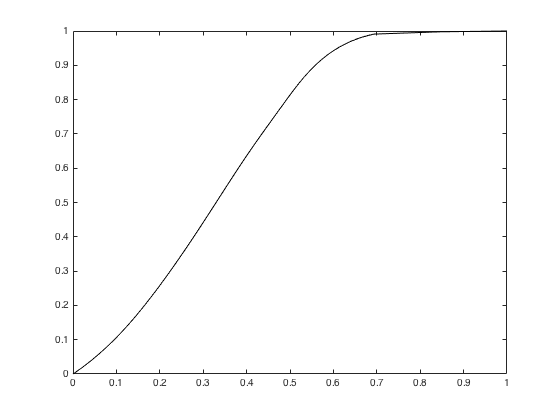}}\hspace*{-1em}
\subfloat{\includegraphics[width=0.25\textwidth]{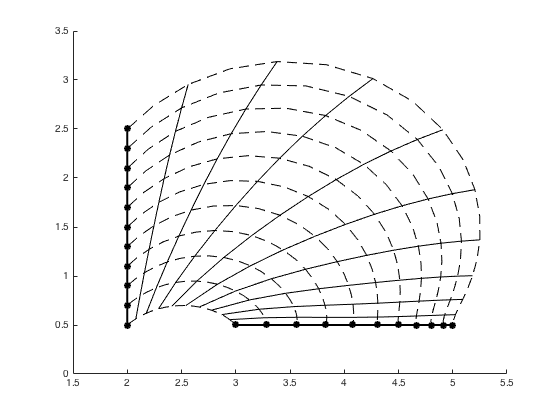}}\hspace*{-1em}
\subfloat{\includegraphics[width=0.25\textwidth]{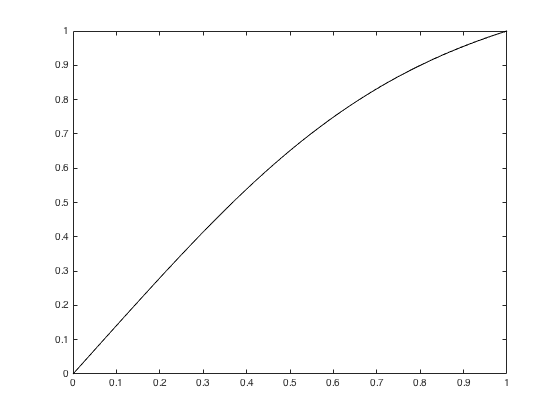}}
\caption{Horizontal geodesics and the corresponding optimal matching between curves in $\mathbb R^2$ (first line) and $\mathbb H^2$ (second line).}
\label{fig:H2vsR2}
\end{figure*}

\begin{figure*}
\centering
\subfloat{\boxed{\includegraphics[width=0.4\textwidth]{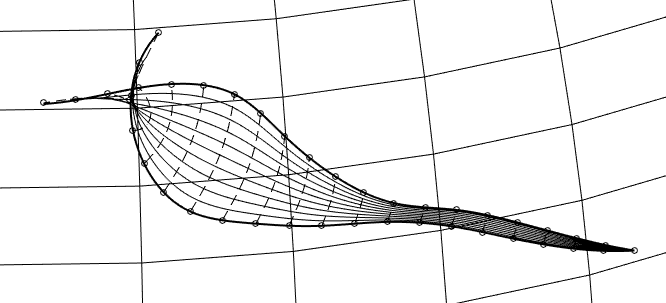}}}\hspace*{1em}
\subfloat{\boxed{\includegraphics[width=0.4\textwidth]{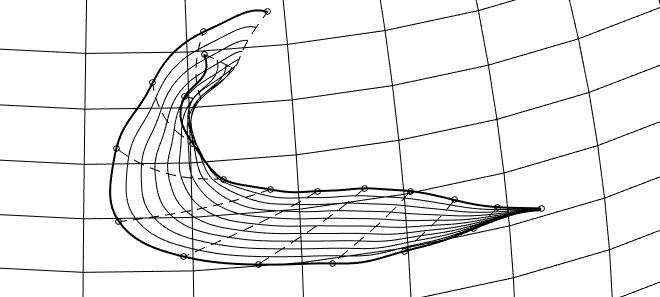}}}\\
\subfloat{\boxed{\includegraphics[width=0.4\textwidth]{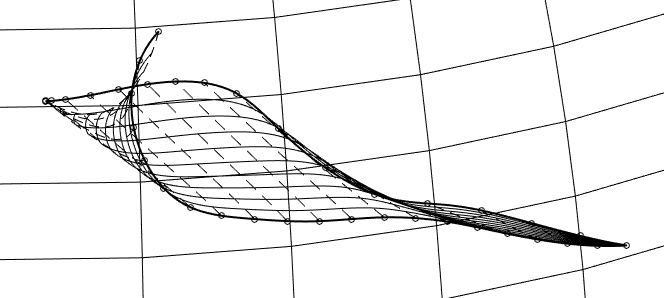}}}\hspace*{1em}
\subfloat{\boxed{\includegraphics[width=0.4\textwidth]{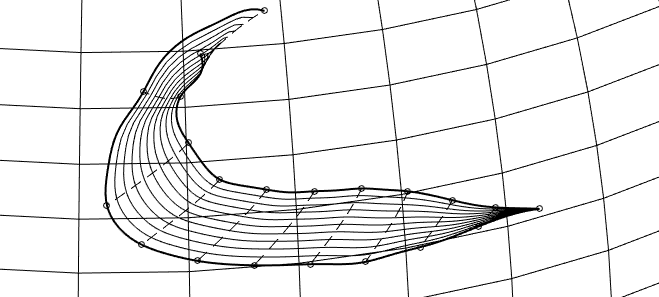}}}
\caption{Geodesics between hurricane tracks in $\mathbb S^2$ before (top) and after (bottom) optimal matching.}
\label{fig:hurricane}
\end{figure*}

\begin{figure*}
\centering
\subfloat{\includegraphics[width=0.3\textwidth]{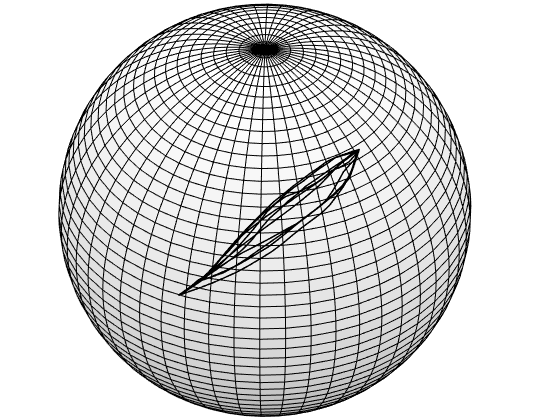}}\hspace*{1em}
\subfloat{\includegraphics[width=0.3\textwidth]{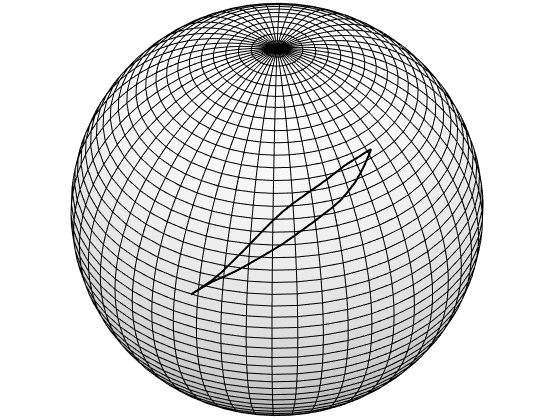}}
\caption{Plane trajectories between Paris and Caracas (left) and Fréchet mean curves of the two clusters given by agglomerative hierarchical clustering (right).}
\label{fig:plane}
\end{figure*}

Finally, we show illustrations on the sphere. Figure \ref{fig:hurricane} shows geodesics before (top row) and after (bottom row) optimal matching, for two pairs of curves representing hurricane tracks from the NASA Tropical Storm Tracks Database\footnote{https://ghrc.nsstc.nasa.gov/storms/.}. Once again we can see that the optimal matching algorithm seems to "flatten out" the deformation between two curves. In the last figure, we show a set of plane trajectories between Paris and Caracas downloaded from the IAGOS-MOZAIC database \footnote{http://iagos.sedoo.fr/.}. It is visually clear that there are two clusters, probably due to different weather conditions. These clusters are easily retrieved using agglomerative hierarchical clustering based on the horizontal geodesic distance. We can find the centers of these clusters by computing the Fréchet means, i.e. for each cluster, the curve that minimizes the sum of the squared distances to all the curves of the cluster. This can be achieved using a Karcher flow algorithm, summarized as follows. 
\begin{algo}[Karcher flow]\label{alg:karcher}
\leavevmode\par \noindent
Input: $\alpha_1, \hdots, \alpha_N$.\\
Initialization : $\hat\alpha = \alpha_1$.\\
Repeat until convergence :
\begin{enumerate}
\item for $i=1,\hdots,N$, 
\begin{itemize}
\item compute the horizontal geodesic $s\mapsto \alpha^{hor}(s)$ from $\hat\alpha$ to $\alpha_i$ using Algorithm \ref{alg:optmatchdis},
\item set $w_i = (\alpha^{hor})'(0)$,
\item update $\alpha_i = \alpha^{hor}(1)$.
\end{itemize}
\item set $w = \frac{1}{N}\sum_{i=1}^N w_i$,
\item update $\hat\alpha = \exp_{\hat\alpha}w$.
\end{enumerate}
Output : $\hat\alpha$.
\end{algo}
The obtained mean curves are shown on the right-hand side of Figure \ref{fig:plane}.

\section{Proof of Theorem \ref{thm}}		
\label{sec:proofthm}

We conclude this paper with the proof of Theorem \ref{thm}. Let us first remind the result.
\begin{thapp}
Let $s \mapsto c(s)$ be a $C^1$-path of $C^2$-curves with non vanishing derivative with respect to $t$. This path can be identified with an element $(s,t)\mapsto c(s,t)$ of $C^{1,2}([0,1]\times[0,1],M)$ such that $c_t \neq 0$. Consider the $C^1$-path in $M^{n+1}$, $s \mapsto \alpha(s)=(x_0(s), \hdots, x_n(s))$, that is the discretization of size $n$ of $c$. Then there exists a constant $\lambda>0$ such that for $n$ large enough, the difference between the energies of $c$ and $\alpha$ is bounded by
\begin{equation*}
| E(c) - E^n(\alpha)| \leq \frac{\lambda}{n}\,  (\inf |c_t|)^{-1} |c_s|_{2,\infty}^2 \left(1+|c_t|_{1,\infty}\right)^3,
\end{equation*}
where $E$ and $E^n$ are the energies with respect to metrics $G$ and $G^n$ respectively and where
\begin{align*}
|c_t|_{1,\infty} &:= |c_t|_\infty + |\nabla_tc_t|_\infty,\\
|c_s|_{2,\infty} &:= |c_s|_\infty + |\nabla_tc_s|_\infty + |\nabla^2_tc_s|_\infty,
\end{align*}
and $|w|_\infty := \sup_{s,t\in[0,1]}|w(s,t)|$ denotes the supremum over both $s$ and $t$ of a vector field $w$ along $c$.
\end{thapp}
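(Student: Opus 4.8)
The plan is to compare the two energies term by term using the explicit formulas $E(c)=\tfrac12\int_0^1\big(|c_s(s,0)|^2+\int_0^1|\nabla_sq(s,t)|^2\,\mathrm dt\big)\,\mathrm ds$ and $E^n(\alpha)=\tfrac12\int_0^1\big(|x_0'(s)|^2+\tfrac1n\sum_{k=0}^{n-1}|\nabla_sq_k(s)|^2\big)\,\mathrm ds$. The starting-point terms agree exactly since $x_0(s)=c(s,0)$, so everything reduces to controlling, for each fixed $s$, the difference between $\int_0^1|\nabla_sq(s,t)|^2\,\mathrm dt$ and the Riemann-type sum $\tfrac1n\sum_k|\nabla_sq_k(s)|^2$. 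The natural way to do this is to introduce the piecewise-geodesic curve $\tilde c(s,\cdot)$ interpolating the points $x_k(s)=c(s,\tfrac kn)$ (whose SRV representation at the nodes is exactly $q_k(s)$), and split the error into (i) a \emph{quadrature} error, replacing $\int_0^1|\nabla_sq(s,t)|^2\,\mathrm dt$ by $\tfrac1n\sum_k|\nabla_sq(s,\tfrac kn)|^2$, and (ii) an \emph{approximation} error, replacing $|\nabla_sq(s,\tfrac kn)|$ by $|\nabla_sq_k(s)|$, i.e. comparing the true infinitesimal SRV deformation of $c$ at the node with the discrete quantity $(n/|\tau_k|)^{1/2}\big((D_\tau\alpha')_k^N+\tfrac12(D_\tau\alpha')_k^T\big)$ coming from Proposition 6.

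The first step is to bound the quadrature error: since $s\mapsto c(s)$ is a $C^1$-path of $C^2$-curves with $c_t\neq 0$, the integrand $t\mapsto|\nabla_sq(s,t)|^2$ is $C^1$ in $t$ with a $t$-derivative controlled by $|c_t|_\infty$, $|\nabla_tc_t|_\infty$, $|c_s|_\infty$, $|\nabla_tc_s|_\infty$, $|\nabla_t^2c_s|_\infty$ and $(\inf|c_t|)^{-1}$; the standard midpoint/endpoint quadrature estimate then gives an $O(1/n)$ bound with a constant of the stated homogeneity. The second step is the heart of the matter: I must show that $(D_\tau\alpha'(s))_k=\tfrac1n\nabla_tc_s(s,\tfrac kn)+O(1/n^2)$ uniformly in $k$, with the error constant again polynomial in the norms above. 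This is where Lemma 1 (the constant-curvature Jacobi field formula) and the remark following Proposition 6 enter: $u\mapsto c_s(s,\tfrac{k+u}n)$ is a Jacobi field along $\gamma_k$, so $(D_\tau\alpha')_k$ is its covariant derivative at $u=0$, and expanding $a_k=1+O(|\tau_k|^2)$, $b_k=1+O(|\tau_k|^2)$ with $|\tau_k|\le|c_t|_\infty/n$ together with a Taylor expansion of the Jacobi field along the short geodesic $\gamma_k$ yields the claimed second-order accuracy. Feeding this into the explicit formula $\nabla_sq_k(s)=(n/|\tau_k|)^{1/2}\big((D_\tau\alpha')_k^N+\tfrac12(D_\tau\alpha')_k^T\big)$ and comparing with $\nabla_sq(s,\tfrac kn)=|c_t|^{-1/2}\big(\nabla_tc_s^N+\tfrac12\nabla_tc_s^T\big)$ (evaluated at $(s,\tfrac kn)$) gives, after using $|\tau_k|=|c_t(s,\tfrac kn)|/n+O(1/n^2)$ and $1+|x|\ge$ the relevant factors, a pointwise bound $\big||\nabla_sq_k(s)|^2-|\nabla_sq(s,\tfrac kn)|^2\big|\le \tfrac{C}{n}(\inf|c_t|)^{-1}|c_s|_{2,\infty}^2(1+|c_t|_{1,\infty})^3$; summing $\tfrac1n\sum_{k=0}^{n-1}$ preserves this bound.

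Finally I combine the two error contributions and integrate over $s\in[0,1]$, which only multiplies by $1$, to obtain $|E(c)-E^n(\alpha)|\le\tfrac{\lambda}{n}(\inf|c_t|)^{-1}|c_s|_{2,\infty}^2(1+|c_t|_{1,\infty})^3$ for $n$ large enough (large enough so that $|\tau_k|$ is small enough for the $O(|\tau_k|^2)$ expansions of $a_k,b_k$ and of $\sinh/\sin$ to be valid with explicit constants, e.g. $|\tau_k|\le 1$, which holds once $n\ge|c_t|_\infty$). The main obstacle I anticipate is bookkeeping in the second step: one must carefully track how the curvature-induced discrepancies ($a_k-1$, $b_k-1$, the $\mathcal R$ terms) propagate through the renormalization by $|\tau_k|^{-1/2}$ and the tangential/normal splitting, and verify that every constant depends on $c$ only through the combination $(\inf|c_t|)^{-1}|c_s|_{2,\infty}^2(1+|c_t|_{1,\infty})^3$ — in particular that no higher power of $(\inf|c_t|)^{-1}$ or of the $c_s$-norms sneaks in. The curvature estimates themselves are uniform because $K\in\{-1,0,1\}$ is fixed, so no curvature bound enters the constant $\lambda$; only the geometry of the path $c$ does.
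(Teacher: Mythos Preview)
Your overall strategy is sound and close in spirit to the paper's: both arguments pass through the piecewise-geodesic interpolant (which the paper calls $\hat c$, you call $\tilde c$) and reduce the question to a quadrature estimate plus a nodal comparison. The paper organises the split slightly differently --- it compares $E(c)$ to $E(\hat c)$ (integral versus integral) and then $E(\hat c)$ to $E^n(\alpha)$ (integral versus Riemann sum, using $\nabla_sq_k=\nabla_s\hat q(\tfrac kn)$) --- whereas you do the quadrature on the smooth integrand first and the nodal comparison second. Both decompositions are legitimate.

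There is, however, a genuine gap in your Step (ii). You write that ``$u\mapsto c_s(s,\tfrac{k+u}n)$ is a Jacobi field along $\gamma_k$, so $(D_\tau\alpha')_k$ is its covariant derivative at $u=0$''. This is false: $c(s,\cdot)$ restricted to $[\tfrac kn,\tfrac{k+1}n]$ is \emph{not} a geodesic, so its $s$-variation field is not a Jacobi field along anything, and in particular not along the geodesic $\gamma_k$. What Lemma~1 and Proposition~6 give you is that $u\mapsto \tilde c_s(s,\tfrac{k+u}n)$ is a Jacobi field along $\gamma_k$, and that $(D_\tau\alpha')_k=\tfrac1n\nabla_t\tilde c_s(s,\tfrac kn)$. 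Your target estimate $(D_\tau\alpha')_k=\tfrac1n\nabla_tc_s(s,\tfrac kn)+O(1/n^2)$ therefore requires comparing $\nabla_t\tilde c_s$ with $\nabla_tc_s$ at the nodes, which is precisely the nontrivial part of the paper's Step~1: it does not follow from expanding $a_k,b_k$ alone, and the paper handles it by passing to a local chart, using that a geodesic is a straight line up to $O(|\tau_k|^2)$, and separately controlling the discrepancy between parallel transport along $c(s,\cdot)$ and along $\gamma_k$. The same issue surfaces in your claimed identity $|\tau_k|=|c_t(s,\tfrac kn)|/n+O(1/n^2)$: since $n|\tau_k|=|\hat c_t(s,\tfrac kn)|$, this is again a statement about $|c_t-\hat c_t|$ and needs the chart argument. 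Once you supply that comparison, the rest of your outline goes through with the stated constants.
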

\begin{proof}[Proof of Theorem 1]
To prove this result, we introduce the unique path $\hat c$ of piecewise geodesic curves of which $\alpha$ is the $n$-discretization. It is obtained by linking the points $x_0(s), x_1(s), \hdots, x_n(s)$ of $\alpha$ by pieces of geodesics for all times $s\in[0,1]$
\begin{align*}
&\,\,\hat c(s,\tfrac{k}{n}) = c(s,\tfrac{k}{n}) = x_k(s),\\
&\left.\hat c(s,\cdot)\right|_{\left[\frac{k}{n},\frac{k+1}{n}\right]} \text{ is a geodesic},
\end{align*} 
for $k=0,\hdots, n$. Then the difference between the energy of the path of curves $E(c)$ and the discrete energy of the path of discrete curves $E^n(\alpha)$ can be controlled in two steps :
\begin{equation*}
|E(c)-E^n(\alpha)| \leq |E(c)-E(\hat c)| + |E(\hat c)- E^n(\alpha)|.
\end{equation*}
\textbf{Step 1}. We first consider the difference between the continuous energies of the smooth and piecewise geodesic curves
\begin{align*}
|E(c) - E(\hat c)| &= \left| \int_0^1 \!\!\int_0^1\!\! \left( |\nabla_sq(s,t)|^2 \!-\! |\nabla_s\hat q(s,t)|^2 \right) \mathrm dt \,\mathrm ds \right|\\
&\leq  \int_0^1 \!\!\int_0^1 \left| |\nabla_sq(s,t)|^2 \!-\! |\nabla_s\hat q(s,t)|^2 \right| \mathrm dt \,\mathrm ds\\
&\leq  \int_0^1 \!\!\int_0^1 \left( |\nabla_sq(s,t)| + |\nabla_s\hat q(s,t)| \right) \cdot \\
&\hspace{3.5em}| \nabla_sq(s,t)^{t,\frac{k}{n}} - \nabla_s\hat q(s,t)^{t,\frac{k}{n}} | \mathrm dt \,\mathrm ds.
\end{align*}
Note that the parallel transports $\nabla_sq(s,t)^{t,\frac{k}{n}}$ and $\nabla_s\hat q(s,t)^{t,\frac{k}{n}}$ are performed along different curves -- $c(s,\cdot)$ and $\hat c(s,\cdot)$ respectively. Let us fix $s\in[0,1]$, $0\leq k \leq n$ and $t\in\left[\frac{k}{n},\frac{k+1}{n}\right]$. From now on we will omit "$s$" in the notation $w(s,t)$ to lighten notations. Using the notation $w^\parallel(t):=w(t)^{t,\frac{k}{n}}$ to denote the parallel transport of a vector field $w$ from $t$ to $\frac{k}{n}$ along its baseline curve, the difference we need to control is
\begin{align*}
&| \nabla_sq^\parallel - \nabla_s \hat q^\parallel| \\
&= \big| |c_t|^{-\frac{1}{2}}(\nabla_sc_t - \tfrac{1}{2} {\nabla_sc_t}^T)^\parallel \!-\! |\hat c_t|^{-\frac{1}{2}}(\nabla_s\hat c_t - \tfrac{1}{2} {\nabla_s\hat c_t}^T)^\parallel \big| \\
&= \big| (\nabla_s c_t-\tfrac{1}{2}{\nabla_s c_t}^T)^\parallel ( |c_t|^{-\frac{1}{2}} - |\hat c_t|^{-\frac{1}{2}}) \\
&\hspace{2em}+ |\hat c_t|^{-\frac{1}{2}}\big( ({\nabla_sc_t}^\parallel - {\nabla_s\hat c_t}^\parallel) -\tfrac{1}{2} ({\nabla_sc_t}^\parallel - {\nabla_s\hat c_t}^\parallel)^T \big) \big|.
\end{align*}
Since $|w-\frac{1}{2}w^T|\leq |w|$ for any vector $w$, we can write
\begin{equation}
\label{diffnsq0}
\begin{aligned}
&| \nabla_sq^\parallel - \nabla_s \hat q^\parallel| \leq | \nabla_sc_t| \cdot \big| |c_t|^{-1/2} \\
&\hspace{5em}- |\hat c_t|^{-1/2}\big| + |\hat c_t|^{-1/2} |{\nabla_sc_t}^\parallel - {\nabla_s\hat c_t}^\parallel |.
\end{aligned}
\end{equation}
Let us first consider the difference $|c_t^\parallel - \hat c_t^\parallel |$. Since $\hat c_t(t)^{t, \frac{k}{n}}=\hat c_t(\frac{k}{n})$, we can write
\begin{align*}
&|c_t(t)^{t,\frac{k}{n}} - \hat c_t(t)^{t,\frac{k}{n}}| \\
&\hspace{5em}\leq |c_t(t)^{t,\frac{k}{n}} - c_t(\tfrac{k}{n}) | + |c_t(\tfrac{k}{n}) - \hat c_t(\tfrac{k}{n}) |.
\end{align*}
The first term is smaller than $1/n \cdot |\nabla_tc_t|_\infty$. To bound the second term, we place ourselves in a local chart $(\varphi,U)$ centered in $c(\tfrac{k}{n})=c(s,\tfrac{k}{n})$, such that $c([0,1]\times[0,1])\subset U$. After identification with an open set of $\mathbb R^d$ -- where $d$ is the dimension of the manifold $M$-- using this chart, we get
\begin{align*}
|c_t(\tfrac{k}{n}) - \hat c_t(\tfrac{k}{n}) | &\leq \left|c_t(\tfrac{k}{n}) - n\left( c(\tfrac{k+1}{n}) - c(\tfrac{k}{n}) \right)\right| \\
&+ \left|\hat c_t(\tfrac{k}{n}) - n\left( c(\tfrac{k+1}{n}) - c(\tfrac{k}{n}) \right)\right|.
\end{align*}
Since a geodesic locally looks like a straight line (see e.g. \cite{emery84}) there exists a constant $\lambda_1$ such that 
\begin{equation*}
\big|\hat c_t(\tfrac{k}{n})- n(c(\tfrac{k+1}{n}) - c(\tfrac{k}{n}) )\big|\leq \lambda_1 \big|c(\tfrac{k+1}{n}) - c(\tfrac{k}{n})\big|^2,
\end{equation*} 
and so
\begin{equation*}
|c_t(\tfrac{k}{n}) - \hat c_t(\tfrac{k}{n}) | \leq \tfrac{1}{2n} |c_{tt}|_\infty + \tfrac{\lambda_1}{n} |c_t|_\infty^2.
\end{equation*}
The second derivative in $t$ of the coordinates of $c$ in the chart $(U,\varphi)$ can be written ${c_{tt}}^\ell={\nabla_tc_t}^\ell - \Gamma_{ij}^\ell {c_t}^i{c_t}^j$ for $\ell = 1,\hdots,d$, and so there exists a constant $\lambda_2$ such that $|c_{tt}| \leq \lambda_2 \left(|\nabla_tc_t|_\infty +|c_t|_\infty^2\right)$, and 
\begin{equation}
\label{diffct}
\big|c_t^\parallel - \hat c_t^\parallel\big| \leq \tfrac{\lambda_3}{n}\left(|c_t|_{1,\infty} + |c_t |_{1,\infty}^2\right).
\end{equation}
This means that for $n$ large enough, we can write e.g.
\begin{equation}
\label{ctbar}
\tfrac{1}{2}\inf |c_t|\leq |\hat c_t| \leq \tfrac{3}{2}|c_t|_{\infty}.
\end{equation}
From \eqref{diffct} we can also deduce that 
\begin{equation}
\label{diffctmoinsundemi}
\begin{aligned}
&\big||c_t|^{-\frac{1}{2}} - |\hat c_t|^{-\frac{1}{2}}\big| = \frac{\big||c_t| - |\hat c_t|\big|}{|c_t|^{\frac{1}{2}}+|\hat c_t|^{\frac{1}{2}}} \leq \frac{|c_t^\parallel - \hat c_t^\parallel |}{|c_t|^{\frac{1}{2}}+|\hat c_t|^{\frac{1}{2}}}\\
&\hspace{6em}\leq \tfrac{\lambda_3}{n}(\inf |c_t|)^{-\frac{1}{2}} \!\!\left(|c_t|_{1,\infty} + |c_t |_{1,\infty}^2\right).
\end{aligned}
\end{equation}
Let us now consider the difference $| {\nabla_sc_t}^\parallel - {\nabla_s \hat c_t}^\parallel |$. Since $c_s(s,\tfrac{k}{n}) = \hat c_s(s,\tfrac{k}{n})$, we get
\begin{align*}
&|\nabla_sc_t(t)^{t,\frac{k}{n}}- \nabla_s \hat c_t(t)^{t,\frac{k}{n}} | \leq \left| \nabla_t c_s(t)^{t,\frac{k}{n}} - \nabla_tc_s(\tfrac{k}{n}) \right|\\
&\hspace{2em} + \left| \nabla_t c_s(\tfrac{k}{n}) - n \left( c_s(\tfrac{k+1}{n})^{\frac{k+1}{n},\frac{k}{n}} - c_s(\tfrac{k}{n})\right) \right|\\
&\hspace{2em}+ \left| \nabla_t \hat c_s(t)^{t,\frac{k}{n}} - \nabla_t\hat c_s(\tfrac{k}{n}) \right| \\
&\hspace{2em}+ \left| \nabla_t \hat c_s(\tfrac{k}{n}) - n \left( \hat c_s(\tfrac{k+1}{n})^{\frac{k+1}{n},\frac{k}{n}} - \hat c_s(\tfrac{k}{n})\right) \right|
\end{align*}
and so
\begin{equation}
\label{diffnsct0}
\begin{aligned}
&|\nabla_sc_t(t)^{t,\frac{k}{n}}- \nabla_s \hat c_t(t)^{t,\frac{k}{n}} |\leq \tfrac{3}{2n} |\nabla_t^2c_s|_\infty + \tfrac{3}{2n}|\nabla_t^2\hat c_s|_\infty.
\end{aligned}
\end{equation}
We can decompose $\nabla_t^2\hat c_s(s,t) = \nabla_t\nabla_s\hat c_t(s,t) = \nabla_s\nabla_t\hat c_t(s,t) + \mathcal R(\hat c_t,\hat c_s)\hat c_t (s,t)$, and since $\nabla_t\hat c_t(s,t)=0$ and $| \mathcal R(X,Y)Z| \leq |K| \cdot(|\langle Y,Z\rangle| |X| + |\langle X,Z,\rangle| |Y|) \leq 2 |K|\cdot |X|\cdot |Y| \cdot |Z|$ by Cauchy Schwarz, we get using Equation \eqref{ctbar}
\begin{equation}
\label{ntntcs}
|\nabla_t^2\hat c_s|\leq 2 \,| \hat c_t|^2 \,|\hat c_s| \leq \tfrac{9}{2}|c_t|_\infty^2|\hat c_s|.
\end{equation}
To bound $|\hat c_s|$ we apply Lemma \ref{lemjacobi} to the Jacobi field $J : [0,1]\ni u\mapsto \hat c_s(s,\frac{k+u}{n})$ along the geodesic $\gamma(u) = \hat c(s,\frac{k+u}{n})$, that is
\begin{equation}
\label{ju}
\begin{aligned}
&J(u)^{u,0} = J(0)^T + a_k(u) J(0)^N\\
&\hspace{8em}+u\nabla_tJ(0)^T + b_k(u)\nabla_tJ(0)^N
\end{aligned}
\end{equation}
where, since $\gamma'(0)=\frac{1}{n}\hat c_t(s,\frac{k}{n})=\tau_k(s)$, the coefficients are defined by
\begin{equation*}
\left\{\begin{matrix*}[l]
a_k(u)=\cosh\!\left(|\tau_k| u \right), \,&b_k(u)=\frac{\sinh(|\tau_k| u)}{|\tau_k|},  \,& K=-1,\\
a_k(u)=1, \,& b_k(u)= u, \,& K=0,\\
a_k(u)=\cos\!\left(|\tau_k| u\right), \,& b_k(u)=\frac{\sin(|\tau_k| u)}{|\tau_k|}, \,& K=+1.
\end{matrix*} \right.
\end{equation*}
This gives $J(1)^{1,0} = J(0)^T + a_k(1) J(0)^N+\nabla_tJ(0)^N + b_k(1)\nabla_tJ(0)^N$ and so
\begin{align*}
&\nabla_tJ(0)^T = \big(J(1)^{1,0}-J(0)\big)^T\\
&\nabla_tJ(0)^N = b_k(1)^{-1}\big(J(1)^{1,0}-a_k(1)J(0)\big)^N.
\end{align*}
Injecting this into \eqref{ju}, we obtain since $u=nt-k$ and $\hat c_s(s,\tfrac{k}{n})=c_s(s,\frac{k}{n})$,
\begin{equation}
\label{cshat}
\begin{aligned}
&\hat c_s(t)^{t,\frac{k}{n}} = c_s(\tfrac{k}{n})^T + a_k(nt-k) c_s(\tfrac{k}{n})^N \\
&+(nt-k) \big( c_s(\tfrac{k+1}{n})^{\frac{k+1}{n},\frac{1}{n}} - c_s(\tfrac{k}{n})\big)^T \\ 
&+ \tfrac{b_k(nt-k)}{b_k(1)} \big(c_s(\tfrac{k+1}{n})^{\frac{k+1}{n},\frac{1}{n}} - a_k(1)c_s(\tfrac{k}{n})\big)^N.
\end{aligned}
\end{equation}
When $n \to \infty$, $a_k(1)\to 1$, $b_k(1)\to 1$, and since $0\leq nt-k \leq 1$, $a_k(nt-k) \to 1$, $b_k(nt-k)\to 1$ also. Therefore, for $n$ large enough we can see that $|\hat c_s| \leq \lambda_4|c_s|_\infty$ for some constant $\lambda_4$. Injecting this into \eqref{ntntcs} gives 
\begin{equation*}
|\nabla_t^2\hat c_s|_\infty \leq \tfrac{9\lambda_4}{2}|c_t|_\infty^2|c_s|_\infty, 
\end{equation*}
and so the difference \eqref{diffnsct0} can be bounded by
\begin{equation}
\label{diffnsct}
\begin{aligned}
|{\nabla_sc_t}^\parallel - {\nabla_s \hat c_t}^\parallel | &\leq \tfrac{3}{2n} \big(|\nabla_t^2c_s|_\infty + \tfrac{9\lambda_4}{2} |c_t|_{\infty}^2|c_s|_\infty \big)\\
& \leq \tfrac{\lambda_5}{n} \,|c_s|_{2,\infty}\big(1+ |c_t|_{1,\infty}^2 \big),
\end{aligned}
\end{equation}
for some constant $\lambda_5$. Injecting \eqref{ctbar}, \eqref{diffctmoinsundemi} and \eqref{diffnsct} in Equation \eqref{diffnsq0} we obtain
\begin{align}
\label{diffnsqfin}
|\nabla_sq^\parallel - \nabla_s\hat q^\parallel| &\leq \tfrac{\lambda_3}{n} (\inf |c_t|)^{-\frac{1}{2}} |c_s|_{2,\infty} \left( |c_t|_{1,\infty} + |c_t|_{1,\infty}^2\right)\nonumber\\
&+ \tfrac{\lambda_5 \sqrt{2}}{n} (\inf|c_t|)^{-\frac{1}{2}} |c_s|_{2,\infty} \left( 1+ |c_t|_{1,\infty}^2\right),\nonumber\\
|\nabla_sq^\parallel - \nabla_s\hat q^\parallel|&\leq \tfrac{\lambda_{6}}{n}\, (\inf |c_t|)^{-\frac{1}{2}} |c_s|_{2,\infty} \left(1+|c_t|_{1,\infty}\right)^2,
\end{align}
for some constant $\lambda_6$. To conclude this first step, let us bound the sum\vspace{-0.5em}
\begin{equation}
\label{sommensq0}
\begin{aligned}
&|\nabla_sq|+|\nabla_s\hat q| &\\
&= |c_t|^{-\frac{1}{2}}|\nabla_sc_t - \tfrac{1}{2}{\nabla_sc_t}^T| + |\hat c_t|^{-\frac{1}{2}}|\nabla_s\hat c_t -\tfrac{1}{2}{\nabla_s\hat c_t}^T|\\
&\leq (\inf|c_t|)^{-\frac{1}{2}} |\nabla_tc_s|_\infty + \sqrt{2} (\inf|c_t|)^{-\frac{1}{2}}|\nabla_t\hat c_s|_\infty.
\end{aligned}
\end{equation}
Taking the derivative according to $t$ on both sides of \eqref{cshat}, we get since $n|\tau_k(s)|=|\hat c_t(s,\frac{k}{n})|$,
\begin{align*}
&\nabla_t\hat c_s(t)^{t,\frac{k}{n}} = |\hat c_t(\tfrac{k}{n})| e_k(nt-k) c_s(\tfrac{k}{n})^N \\
&+n\left( c_s(\tfrac{k+1}{n})^\parallel - c_s(\tfrac{k}{n})\right)^T \\
&+ n \,\tfrac{ a_k(nt-k)}{b_k(1)} \left(c_s(\tfrac{k+1}{n})^\parallel - a_k(1) c_s(\tfrac{k}{n})\right)^N,
\end{align*}
since derivation of the coefficients give $b_k'(u)=a_k(u)$ and $a_k'(u) = |\tau_k|e_k(u)=\frac{1}{n}|\hat c_t(\frac{k}{n})|e_k(u)$, where
\begin{equation*}
e_k(u)=\left\{\begin{matrix*}[l]
\sinh\left(  |\tau_k| u \right), & \text{if } K=-1,\\
0 & \text{if } K=0,\\
-\sin\left( |\tau_k| u \right), & \text{if } K=+1.
\end{matrix*} \right.
\end{equation*}
Since the coefficients $e_k(nt-k)$, $a_k(nt-k)/b_k(1)$ and $a_k(1)$ are bounded for $n$ large enough, and since $|\hat c_t|\leq \frac{3}{2}|c_t|_\infty$, we can write for some constant $\lambda_7$,
\begin{equation}
\label{ntcsbar}
\begin{aligned}
|\nabla_t\hat c_s|_\infty &\leq \lambda_7\left( |\hat c_t|_\infty |c_s|_\infty + |\nabla_tc_s|_\infty \right)\\
&\leq \tfrac{3\lambda_7}{2}|c_s|_{2,\infty}\left( 1+ |c_t|_{1,\infty}\right).
\end{aligned}
\end{equation}
Inserting this into \eqref{sommensq0} gives
\begin{equation}
\label{sommensq}
\begin{aligned}
&|\nabla_sq|+|\nabla_s\hat q| \\
&\leq (\inf |c_t|)^{-1/2} \big(|\nabla_tc_s|_\infty + \tfrac{3\lambda_7}{\sqrt{2}}|c_s|_{2,\infty}\left( 1+ |c_t|_{1,\infty}\right)\big)\\
&\leq \lambda_8 (\inf |c_t|)^{-1/2} |c_s|_{2,\infty}(1+|c_t|_{1,\infty}). 
\end{aligned}
\end{equation}
Finally, we are able to bound the difference between the energies of the smooth and piecewise-geodesic paths by combining Equations \eqref{diffnsqfin} and \eqref{sommensq}
\begin{equation*}
|E(c) - E(\hat c)| \leq \frac{\lambda_6\lambda_8}{n}\,(\inf|c_t|)^{-1} |c_s|_{2,\infty}^2 \left(1+|c_t|_{1,\infty}\right)^3.
\end{equation*}
\textbf{Step 2}. Let us now consider the difference of energy between the path of piecewise geodesic curves and the path of discrete curves. Since $\nabla_sq_k(s) = \nabla_s\hat q(s,\tfrac{k}{n})$ for all $s\in[0,1]$ and $0\leq k\leq n$, we can write
\begin{align*}
&|E(\hat c) - E^n(\alpha)| \\
&= \left| \int_0^1\left( \int_0^1 |\nabla_s\hat q(s,t)|^2 \mathrm dt - \frac{1}{n} \sum_{k=0}^{n-1} |\nabla_sq_k(s)|^2 \right) \mathrm ds \right| \\
&\leq \sum_{k=0}^{n-1} \int_0^1  \int_{\frac{k}{n}}^{\frac{k+1}{n}} \left| \,|\nabla_s\hat q(s,t)|^2 - | \nabla_s\hat q(s,\tfrac{k}{n}) |^2 \,\right| \mathrm dt \, \mathrm ds \\
&\leq \sum_{k=0}^{n-1} \int_0^1 \!\!\int_{\frac{k}{n}}^{\frac{k+1}{n}} \big(|\nabla_s\hat q(s,t)|+ | \nabla_s\hat q(s,\tfrac{k}{n}) | \big) \cdot\\
&\hspace{8em}\big|\nabla_s\hat q(s,t)^{t,\frac{k}{n}} - \nabla_s\hat q(s,\tfrac{k}{n}) \big| \, \mathrm dt \, \mathrm ds.
\end{align*}
We fix once again $s\in[0,1]$, $0\leq k \leq n$ and $t\in\hspace{-0.1em}\left[\frac{k}{n},\frac{k+1}{n}\right]$. As in step 1, we will omit "$s$" in most notations. Since $|\hat c_t(t)|= |\hat c_t(\frac{k}{n})|$, we get
\begin{align*}
&\big| \nabla_s\hat q(t)^{t,\frac{k}{n}} - \nabla_s\hat q(\tfrac{k}{n}) \big| \\
& \leq  \Big||\hat c_t(\tfrac{k}{n})|^{-\frac{1}{2}} \Big(\nabla_s\hat c_t(t)^{t,\frac{k}{n}} - \nabla_s\hat c_t(\tfrac{k}{n}) \\
& \hspace{10em}- \tfrac{1}{2} \big(\nabla_s\hat c_t(t)^{t,\frac{k}{n}} - \nabla_s\hat c_t(\tfrac{k}{n})\big)^T\Big)\Big|\\
&\leq |\hat c_t(\tfrac{k}{n})|^{-\frac{1}{2}} \big|\nabla_s\hat c_t(t)^{t,\frac{k}{n}} - \nabla_s\hat c_t(\tfrac{k}{n})\big|.
\end{align*}
Considering once again the Jacobi field 
\begin{equation*}
J(u) := \hat c_s(\tfrac{k+u}{n}), \quad u\in[0,1],
\end{equation*} 
along the geodesic $\gamma(u) = \hat c(\frac{k+u}{n})$, Equation \eqref{ju} gives
\begin{align*}
&\hat c_s(t)^{t,\frac{k}{n}} = c_s(\tfrac{k}{n})^T \!+ a_k(nt-k) c_s(\tfrac{k}{n})^N \\
&\hspace{2em}+(t-\tfrac{k}{n}) \nabla_t\hat c_s(\tfrac{k}{n})^T + b_k(nt-k) \tfrac{1}{n}\nabla_t\hat c_s(\tfrac{k}{n})^N.
\end{align*}
Recall that $b_k'(u)=a_k(u)$ and $a_k'(u) = |\tau_k|e_k(u)$, and so taking the derivative with respect to $t$ and decomposing $\nabla_t\hat c_s(\tfrac{k}{n})^T=\nabla_t\hat c_s(\tfrac{k}{n})-\nabla_t\hat c_s(\tfrac{k}{n})^N$, we obtain
\begin{align*}
\nabla_t\hat c_s(t)^{t,\frac{k}{n}} -  \nabla_t\hat c_s(\tfrac{k}{n}) &= |\hat c_t(\tfrac{k}{n})| e_k(nt-k) \cdot c_s(\tfrac{k}{n})^N   \\
&+ \big(a_k(nt-k)-1\big)\nabla_t\hat c_s(\tfrac{k}{n})^N.
\end{align*}
Noticing that $\frac{e_k(nt-k)}{(nt-k)|\tau_k|} \to 1$ and $\frac{a_k(nt-k)-1}{(nt-k)|\tau_k|}\to 0$ when $n\to \infty$, we can deduce that for $n$ large enough, 
\begin{align*}
&|e_k(nt-k) | \leq 2(nt-k) |\tau_k| \leq 2|\tau_k| =\tfrac{2}{n}|c_t| \leq \tfrac{2}{n} |c_t|_\infty,\\
&|a_k(nt-k)-1| \leq (nt-k)|\tau_k| \leq |\tau_k| = \tfrac{1}{n}|c_t| \leq \tfrac{1}{n}|c_t|_\infty.
\end{align*}
This gives
\begin{align*}
&\int_{\frac{k}{n}}^{\frac{k+1}{n}} \big|\nabla_t\hat c_s(s,t)^{t,\frac{k}{n}} -  \nabla_t\hat c_s(s,\tfrac{k}{n^2})|\, \mathrm dt \\
&\hspace{8em}\leq \tfrac{2}{n^2}\big( |c_t|_\infty^2 |c_s|_\infty  + |c_t|_\infty|\nabla_t\hat c_s|_\infty\big).
\end{align*}
Recall from \eqref{ntcsbar} and \eqref{sommensq} that 
\begin{align*}
&|\nabla_t\hat c_s|_\infty \leq \tfrac{3\lambda_7}{2}|c_s|_{2,\infty}\left( 1+ |c_t|_{1,\infty}\right),\\
&|\nabla_s\hat q|_\infty \leq \tfrac{3\lambda_7}{\sqrt{2}}(\inf |c_t|)^{-\frac{1}{2}}|c_s|_{2,\infty}\left( 1+ |c_t|_{1,\infty}\right),
\end{align*}
and so
\begin{align*}
\label{int}
&\int_{\frac{k}{n}}^{\frac{k+1}{n}} \!\big(|\nabla_s\hat q(t)|+ | \nabla_s\hat q(\tfrac{k}{n}) | \big) \cdot | \nabla_s\hat q(t)^{t,\frac{k}{n}}-\nabla_s\hat q(\tfrac{k}{n})| \,\mathrm dt \nonumber\\
&\leq 2 |\nabla_s\hat q|_\infty \sqrt{2}(\inf|c_t|)^{-\frac{1}{2}}\!\!\int_{\frac{k}{n}}^{\frac{k+1}{n}}\!\! \big|\nabla_t\hat c_s(t)^{t,\frac{k}{n}} -  \nabla_t\hat c_s(\tfrac{k}{n})|\, \mathrm dt\\
&\leq 6\lambda_7 (\inf|c_t|)^{-1}|c_s|_{2,\infty}(1+|c_t|_{1,\infty}) \tfrac{2}{n^2} \big(|c_t|_\infty^2 |c_s|_\infty \\
&\hspace{10em} + |c_t|_\infty\tfrac{3\lambda_7}{2}|c_s|_{2,\infty}\left( 1+ |c_t|_{1,\infty}\right)\big)\\
&\leq \tfrac{\lambda_9}{n^2}\, (\inf|c_t|)^{-1} |c_s|_{2,\infty}^2(1+|c_t|_{1,\infty})^3.
\end{align*}
Finally, we obtain
\begin{equation*}
|E(\hat c) - E^n(\alpha)| \leq \, \frac{\lambda_9}{n} \, (\inf|c_t|)^{-1} |c_s|^2_{2,\infty} \left(1+|c_t|_{1,\infty}\right)^3,
\end{equation*}
which completes the proof.
\end{proof}

\section*{Acknowledgments}

 This research was supported by Thales Air Systems and the french MoD DGA. MOZAIC/CARIBIC/IAGOS data were created with support from the European Commission, national agencies in Germany (BMBF), France (MESR), and the UK (NERC), and the IAGOS member institutions (http://www.iagos.org/partners). The participating airlines (Lufthansa, Air France, Austrian, China Airlines, Iberia, Cathay Pacific, Air Namibia, Sabena) supported IAGOS by carrying the measurement equipment free of charge since 1994. The data are available at http://www.iagos.fr thanks to additional support from AERIS.


\section*{Appendix A} 

\textbf{Lemma 1} \textit{
Let $\gamma : [0,1] \rightarrow M$ be a geodesic of a manifold $M$ of constant sectional curvature $K$, and $J$ a Jacobi field along $\gamma$. Then the parallel transport of $J(t)$ along $\gamma$ from $\gamma(t)$ to $\gamma(0)$ is given by 
\begin{align*}
J(t)^{t,0} &= J^T(0) + \tilde a_k(t) J^N(0)  \\
&\hspace{7em}+ t \,\nabla_tJ^T(0) +  \tilde b_k(t)\nabla_tJ^N(0),
\end{align*}
for all $t\in[0,1]$, where
\begin{equation*}
\left\{\begin{matrix*}[l]
\tilde a_k(t)=\cosh\left( |\gamma'(0)| t \right), &\tilde b_k(t)=\frac{\sinh(|\gamma'(0)| t)}{|\gamma'(0)|},  & K=-1,\\
\tilde a_k(t)=1, &\tilde b_k(t)= t, & K=0,\\
\tilde a_k(t)=\cos\left( |\gamma'(0)| t \right), &\tilde b_k(t)=\frac{\sin(|\gamma'(0)| t)}{|\gamma'(0)|}, & K=+1.
\end{matrix*} \right.
\end{equation*}
}
\begin{proof}
As a Jacobi field along $\gamma$, $J$ satisfies the well-known equation (see e.g. \cite{jost})
\begin{equation*}
\nabla_t^2 J(t) = - \mathcal R(J(t),\gamma' (t))\gamma'(t).
\end{equation*}
If $M$ is flat, we get $\nabla_t^2J(t)=0$ and so $J(t)=J(0)+t\nabla_tJ(0)$. If not, we can decompose $J$ in the sum $J=J^T + J^N$ of two vector fields that parallel translate along $\gamma$, $J^T=\langle J,v\rangle v$ with $v=\gamma'/|\gamma'|$, and $J^N = J-J^T$. Since $\langle \nabla_t^2 J, \gamma' \rangle=0$ and $\gamma'$ is parallel along $\gamma$, we get by integrating twice that
\begin{align*}
\langle J(t),\gamma' (t) \rangle&=\langle \nabla_tJ(0),\gamma' (0) \rangle t+\langle J(0),\gamma'(0)\rangle,\\
\langle J(t),v(t) \rangle&= \langle \nabla_tJ(0),v(0) \rangle t+\langle J(0),v(0) \rangle.
\end{align*}
Since 
\begin{equation*}
\nabla_t^2J^T(t)=\nabla_t^2\langle J(t),v(t)\rangle v(t)=\langle \nabla_t^2J(t),v(t)\rangle v(t)=0, 
\end{equation*}
the normal component $J^N$ is also a Jacobi field, that is it verifies 
\begin{equation*}
\nabla_t^2J^N(t) = -\mathcal R(J^N(t), \gamma'(t))\gamma'(t). 
\end{equation*}
And since $M$ has constant sectional curvature $K$, for any vector field $w$ along $\gamma$ we have
\begin{align*}
\langle \mathcal R(J^N,\gamma')\gamma' ,w \rangle &= K \left( \langle \gamma',\gamma'\rangle \langle J^N,w\rangle \!-\! \langle J^N, \gamma' \rangle\langle \gamma', w \rangle \right)\\
&= \langle K |\gamma'|^2J^N,w \rangle,
\end{align*}
and the differential equation verified by $J^N$ can be rewritten $\nabla_t^2 J^N(t)=-K \, |\gamma'(t)|^2 \, J^N(t)$. Since the speed of the geodesic $\gamma$ has constant norm, the solution to that differential equation is of the form 
\begin{equation*}
J^N(t)=\left( \lambda e^{ |\gamma'(0)|  t } + \mu e^{-|\gamma'(0)| t }\right)\omega(t),
\end{equation*}
when $K=-1$ and
\begin{equation*}
J^N(t)=\left( \lambda e^{ i |\gamma'(0)| t} + \mu e^{ - i|\gamma'(0)| t}\right)\omega(t),
\end{equation*}
when $K=1$. Using the initial conditions $J^N(0)$ and $\nabla_tJ^N(0)$ to find the constants $\lambda,\mu$, we get for $K=-1$
\begin{equation*}
J^N(t) = J^N(0) \cosh\left( |\gamma'(0)| t \right) + \nabla_tJ^N(0) \tfrac{\sinh\left(|\gamma'(0)| t \right)}{|\gamma'(0)|},
\end{equation*}
and for $K=1$, the same formula with cosine and sine functions instead of hyperbolic cosine and sine.
\end{proof}

\noindent
\textbf{Lemma 3}\textit{
The covariant derivatives of the functions $f_k^{(-)}$ and $g_k^{(-)}$ with respect to $s$ are functions $T_{x_{k+1}(s)}M \rightarrow T_{x_k(s)}M$ given by
\begin{align*}
\nabla_s\big(f_k^{(-)}\big) &: w \mapsto (\nabla_sf_k)^{(-)}(w) + f_k\big(\mathcal R\left(Y_k, \tau_k \right)({w_{k+1}}^\parallel)\big),\\
\nabla_s\big(g_k^{(-)}\big) &: w \mapsto (\nabla_sg_k)^{(-)}(w) + g_k\big(\mathcal R\left(Y_k, \tau_k \right)({w_{k+1}}^\parallel)\big),
\end{align*}
where 
\begin{align*}
&(\nabla_sf_k)(s)^{(-)} = \nabla_sf_k(s) \circ P^{x_{k+1}(s),x_k(s)}_{\gamma_k(s)},\\
&(\nabla_sg_k)(s)^{(-)} = \nabla_sg_k(s) \circ P^{x_{k+1}(s),x_k(s)}_{\gamma_k(s)},\\
&Y_k = ({x_k}')^T + b_k ({x_k}')^N + \tfrac{1}{2}{\nabla_s\tau_k}^T + K\frac{1-a_k}{|\tau_k|^2} \,{\nabla_s\tau_k}^N,\label{yk}
\end{align*}
if $K$ is the sectional curvature of the base manifold.
}
\begin{proof}
Fix $0\leq k \leq n$ and let $w_{k+1} : s \mapsto w_{k+1}(s)$ be a vector field along the curve $x_{k+1}:s\mapsto x_{k+1}(s)$. By definition,
\begin{align*}
&\nabla_s\big(f_k^{(-)}(w_{k+1})\big) = \nabla_s\big(f_k^{(-)}\big)(w_{k+1}) + f_k^{(-)}(\nabla_sw_{k+1}),\\
&\nabla_s\big(g_k^{(-)}(w_{k+1})\big) = \nabla_s\big(g_k^{(-)}\big)(w_{k+1}) + g_k^{(-)}(\nabla_sw_{k+1}).
\end{align*}
Consider the path of gedesics $ s \mapsto \gamma_k(s,\cdot)$ such that for all $s\in[0,1]$, $\gamma_k(s,0) = x_k(s)$, $\gamma_k(s,1) = x_{k+1}(s)$ and $t \mapsto \gamma_k(s,t)$ is a geodesic. We denote by ${w_{k+1}}^\parallel$ the vector field along the curve $x_k$ obtained by parallel transporting back the vector $w_{k+1}(s)$ along the geodesic $\gamma_k(s,\cdot)$ for all $s\in[0,1]$, i.e. ${w_{k+1}}^\parallel(s) = P_{\gamma_k(s,\cdot)}^{1,0}(w_{k+1}(s))$. We have
\begin{equation}
\label{nsfkmoins}
\begin{aligned}
\nabla_s\big(f_k^{(-)}(w_{k+1})\big) &= \nabla_s\big(f_k({w_{k+1}}^\parallel)\big) \\
&= \nabla_sf_k({w_{k+1}}^\parallel) + f_k\big(\nabla_s({w_{k+1}}^\parallel)\big),
\end{aligned}
\end{equation}
and so we need to compute $\nabla_s({w_{k+1}}^\parallel)$. Let $V(s,t):=P_{\gamma_k(s,\cdot)}^{1,t}(w_{k+1})$ so that $\nabla_sV(s,1) = \nabla_sw_{k+1}$ and $\nabla_sV(s,0) = \nabla_s({w_{k+1}}^\parallel)$, then 
\begin{align*}
&\nabla_sV(s,1)^{1,0} = \nabla_sV(s,0) + \int_0^1 \nabla_t\nabla_sV(s,t)^{t,0} \mathrm dt, \\
&=\nabla_sV(s,0) + \int_0^1 \mathcal R({\partial_t\gamma_k}^{t,0},{\partial_s\gamma_k}^{t,0})V(s,t)^{t,0} \mathrm dt,
\end{align*}
since $\nabla_tV=0$, and where $\partial_t\gamma_k(s,t)^{t,0} = \tau_k(s)$. We get, since $\nabla R=0$,
\begin{equation}
\label{nswparint}
(\nabla_sw_{k+1})^\parallel = \nabla_s({w_{k+1}}^\parallel)+ \mathcal R\!\!\left(\tau_k, \int_0^1 {\partial_s\gamma_k}^{t,0} \mathrm dt \right)\!\!({w_{k+1}}^\parallel).
\end{equation}
To find an expression for ${\partial_s\gamma_k}^{t,0}$, we consider the Jacobi field $J(t) := \partial_s\gamma_k(s,t)$ along the geodesic $t\mapsto \gamma_k(s,t)$. The vector field $J$ verifies
\begin{align*}
J(0) = {x_k}'(s),\, J(1) = {x_{k+1}}'(s),\, \nabla_tJ(0) = \nabla_s \tau_k(s),
\end{align*}
where the last equality results from the inversion $\nabla_t\partial_s\gamma_k(s,0) = \nabla_s\partial_t\gamma_k(s,0)$ and $\partial_t\gamma_k(s,0) = \tau_k(s)$. Applying Lemma \ref{lemjacobi} gives, for all $k=0,\hdots,n-1$,
\begin{align*}
{\partial_s\gamma_k(s,t)}^{t,0} = {x_k}'(s)^T &+ a_k(s,t) \,{x_k}'(s)^N \\
&+ t\,\nabla_s\tau_k(s)^T +b_k(s,t) \nabla_s\tau_k(s)^N.
\end{align*}
with the coefficients 
\begin{equation*}
a_k(s,t)=\left\{\begin{matrix*}[l]
\cosh\left( |\tau_k(s)| t \right), &  \text{if } K=-1,\\
1 &  \text{if } K=0,\\
\cos\left( |\tau_k(s)| t \right), & \text{if } K=+1,
\end{matrix*} \right.
\end{equation*}
\begin{equation*}
b_k(s,t)=\left\{\begin{matrix*}[l]
\sinh\left(|\tau_k(s)| t \right)/|\tau_k(s)|   & \text{if } K=-1,\\
1 & \text{if } K=0,\\
\sin\left(|\tau_k(s))| t \right)/|\tau_k(s)| & \text{if } K=+1.
\end{matrix*} \right.
\end{equation*}
Integrating this and injecting it in \eqref{nswparint} gives
\begin{equation}
\label{nswpar}
\nabla_s({w_{k+1}}^\parallel) = (\nabla_sw_{k+1})^\parallel + \mathcal R\left(Y_k, \tau_k \right)({w_{k+1}}^\parallel),
\end{equation}
where $Y_k$ is defined by
\begin{equation*}
Y_k = ({x_k}')^T + b_k ({x_k}')^N + \tfrac{1}{2}{\nabla_s\tau_k}^T + K\frac{1-a_k}{|\tau_k|^2} \,{\nabla_s\tau_k}^N,
\end{equation*}
and injecting this in \eqref{nsfkmoins} finally gives, 
\begin{align*}
\nabla_s\big(f_k^{(-)}(w_{k+1})\big) &= \nabla_sf_k({w_{k+1}}^\parallel) + f_k\big( (\nabla_sw_{k+1})^\parallel \big) \\
&\hspace{5em}+ f_k\big(\mathcal R\left(Y_k, \tau_k \right)({w_{k+1}}^\parallel)\big)\\
&= (\nabla_sf_k)^{(-)}(w_{k+1}) + f_k^{(-)}(\nabla_sw_{k+1}) \\
&\hspace{5em}+f_k\big(\mathcal R\left(Y_k, \tau_k \right)({w_{k+1}}^\parallel)\big),
\end{align*}
which is what we wanted. The covariant derivative $\nabla_s\big(g_k^{(-)}(w_{k+1})\big)$ can be computed in a similar way.
\end{proof}

\section*{Appendix B}

\textbf{Proposition 6 (Discrete geodesic equations)}\textit{
A path $s \mapsto \alpha(s) = \left( x_0(s), \hdots, x_n(s) \right)$ in $M^{n+1}$ is a geodesic for metric $G^n$ if and only if its SRV representation $s\mapsto \big(x_0(s), (q_k(s))_{k}\big)$ verifies the following differential equations
\begin{equation*}
\begin{aligned}
&\nabla_s{x_0}' + \frac{1}{n} \Big( R_0 + f_0^{(-)}(R_1)  \\
&\hspace{7em}+ \hdots + f_0^{(-)}\circ \cdots \circ f_{n-2}^{(-)} (R_{n-1})\Big)  = 0, \\
&\nabla_s^2q_k + \frac{1}{n} \,\,g_k^{(-)}\Big( R_{k+1} + f_{k+1}^{(-)}(R_{k+2}) \\
& \hspace{7em}+ \hdots + f_{k+1}^{(-)} \circ \cdots \circ f_{n-2}^{(-)}(R_{n-1})\Big) = 0,
\end{aligned}
\end{equation*}
for all $k=0, \hdots, n-1$, with the notations \eqref{not} and $R_k := \mathcal R(q_k,\nabla_sq_k){x_k}'$.
}

\begin{proof}
We consider a variation $(-\delta, \delta) \ni a \mapsto \alpha(a,\cdot)=(x_0(a,\cdot),\hdots, x_n(a,\cdot))$ of this curve which coincides with $\alpha$ for $a=0$, i.e. $\alpha(0,s) = \alpha(s)$ for all $s\in[0,1]$, and which preserves the end points of $\alpha$, i.e. $\alpha(a,0)=\alpha(0)$ and $\alpha(a,1)=\alpha(1)$ for all $a\in (-\delta, \delta)$. The energy of this variation with respect to metric $G^n$ can be seen as a real function of the variable $a$ and is given by
\begin{equation*}
E^n(a) = \frac{1}{2} \int_0^1 \left( |\partial_s x_0(a,s)|^2 + \frac{1}{n} \sum_{k=0}^{n-1} |\nabla_sq_k(a,s) |^2 \right) \mathrm ds,
\end{equation*}
and its derivative $(E^n)'(a)$ with respect to $a$ is
\begin{align*}
&\int_0^1 \left( \Big\langle \partial_a\partial_s x_0, \partial_s x_0 \Big\rangle
+ \frac{1}{n} \sum_{k=0}^{n-1} \Big\langle \nabla_a\nabla_sq_k, \nabla_sq_k\Big\rangle \right) \mathrm ds \\
&= \int_0^1 \Bigg( \Big\langle \partial_s\partial_a x_0, \partial_s x_0 \Big\rangle \\
&\hspace{1em}+ \frac{1}{n} \sum_{k=0}^{n-1} \Big\langle \nabla_s\nabla_aq_k + \mathcal R(\partial_ax_k,\partial_sx_k)q_k, \nabla_sq_k\Big\rangle \Bigg) \mathrm ds, \\
&= - \int_0^1 \Bigg( \Big\langle \nabla_s\left(\partial_sx_0\right), \partial_ax_0\Big\rangle + \frac{1}{n} \sum_{k=0}^{n-1} \Big\langle \nabla_s^2q_k, \nabla_aq_k \Big\rangle \\
&\hspace{1em}+ \Big\langle \mathcal R(q_k, \nabla_sq_k)\partial_sx_k, \partial_ax_k \Big\rangle \Bigg)\mathrm ds,
\end{align*}
where we integrate by parts to obtain the third line from the second. The goal is to express $\partial_ax_k$ in terms of $\partial_ax_0$ and $\nabla_aq_\ell$, $\ell=0,\cdots,k$. That way, the only elements that depend on $a$ once we take $a=0$ are $(\partial_ax_0, \nabla_aq_0, \cdots, \nabla_aq_{n-1})$ which can be chosen \emph{independently} to be whatever we want. Let us fix $0\leq k \leq n-1$ and $s\in[0,1]$ and consider the path of geodesics $a \mapsto \gamma_k(a,\cdot)$ such that $\gamma_k(a,0) = x_k(a,s)$, $\gamma_k(a,1) = x_{k+1}(a,s)$ and $\partial_t\gamma_k(a,0) = \tau_k(a,s) = \log_{x_k(a,s)}(x_{k+1}(a,s))$. Then by definition, for each $a\in[0,1]$, $t\mapsto J(a,t):=\partial_a\gamma_k(a,t)$ is a Jacobi field along the geodesic $t\mapsto \gamma_k(a,t)$ of $M$, and so Lemma \ref{lemjacobi} gives
\begin{equation}
\label{xkplus1par}
{\partial_ax_{k+1}}^\parallel = {\partial_ax_k}^T + a_k \,{\partial_ax_k}^N + {\nabla_a\tau_k}^T + b_k {\nabla_s\tau_k}^N,
\end{equation}
where ${\partial_a x_{k+1}}^\parallel$ denotes the parallel transport of $\partial_a x_{k+1}$ from $x_{k+1}(s)$ to $x_k(s)$ along the geodesic. Differentiation of $q_k = \sqrt{n} \, \tau_k/|\tau_k|$ gives 
\begin{equation*}
\nabla_sq_k = \sqrt{n} \, |\tau_k|^{-1/2} \left( \nabla_s\tau_k - \tfrac{1}{2} {\nabla_s\tau_k}^T \right),
\end{equation*}
and taking the tangential part on both sides yields ${\nabla_sq_k}^T = \sqrt{n} \, |\tau_k|^{-1/2} \frac{1}{2} {\nabla_s\tau_k}^T$, and so finally 
\begin{align*}
\nabla_s\tau_k &= |\tau_k|^{1/2}/\sqrt{n} \left( \nabla_sq_k + {\nabla_sq_k}^T\right)\\
&= |q_k|/n \left( \nabla_sq_k + {\nabla_sq_k}^T\right). 
\end{align*}
Injecting this in \eqref{xkplus1par} and noticing that $\langle f_k(w),z\rangle = \langle w,f_k(z)\rangle$ and $\langle g_k(w),z\rangle = \langle w,g_k(z)\rangle$ for any pair of vectors $w,z$ gives 
 \begin{align}
&{\partial_a x_{k+1}}^\parallel = f_k(\partial_a x_k) + \frac{1}{n} \, g_k(\nabla_aq_k),  \label{rec1}\\
&\big\langle w_{k+1}, \partial_ax_{k+1} \big\rangle = \big\langle f_k^{(-)}(w_{k+1}), \partial_ax_k \big\rangle \nonumber\\
&\hspace{10em}+ \frac{1}{n} \big\langle g_k^{(-)}(w_{k+1}), \nabla_aq_k \big\rangle,\label{rec2}
\end{align}
for any tangent vector $w_{k+1}\in T_{x_{k+1}}M$. From equation \eqref{rec2} we can deduce, for $k=1,\hdots, n$,
\begin{align*}
\big\langle w_k, \partial_ax_k \big\rangle &= \Big\langle f_0^{(-)} \circ \cdots \circ f_{k-1}^{(-)}(w_k), \partial_ax_0 \Big\rangle \\
+& \frac{1}{n} \sum_{\ell=0}^{k-1} \Big\langle g_\ell^{(-)} \circ f_{\ell +1}^{(-)}\circ \cdots \circ f_{k-1}^{(-)}(w_k), \nabla_aq_\ell \Big\rangle.
\end{align*}
With the notation $R_k := \mathcal R(q_k,\nabla_sq_k){x_k}'$ we get
\begin{align*}
\big\langle R_k, \partial_ax_k \big\rangle &= \Big\langle f_0^{(-)} \circ \cdots \circ f_{k-1}^{(-)}(R_k), \partial_ax_0 \Big\rangle \\
+& \frac{1}{n} \sum_{\ell=0}^{k-1} \Big\langle g_\ell^{(-)} \circ f_{\ell +1}^{(-)}\circ \cdots \circ f_{k-1}^{(-)}(R_k), \nabla_aq_\ell \Big\rangle,
\end{align*}
and we can then write the derivative of the energy $(E^n)'(0)$ for $a=0$ in the following way
\begin{align*}
& - \int_0^1 \bigg( \, \Big\langle \nabla_s{x_0}' + \frac{1}{n} \sum_{k=0}^{n-1} f_0^{(-)}\circ \cdots \circ f_{k-1}^{(-)} (R_k), \partial_a x_0 \Big\rangle \\
&+ \frac{1}{n^2} \sum_{k=1}^{n-1} \sum_{\ell=0}^{k-1} \Big\langle g_\ell^{(-)} \circ f_{\ell+1}^{(-)} \circ \cdots \circ f_{k+1}^{(-)} (R_k), \nabla_aq_\ell \Big\rangle\\
&+ \frac{1}{n} \sum_{k=0}^{n-1} \big\langle \nabla_s^2q_k, \nabla_aq_k \Big\rangle \,\bigg) \mathrm ds,
\end{align*}
where in the first sum we use the notation convention $f_0 \circ \cdots \circ f_{-1} := \text{Id}$. Noticing that the double sum can be rewritten
\begin{equation*}
\sum_{\ell=0}^{n-2}\sum_{k=\ell+1}^{n-1} \Big\langle g_\ell^{(-)} \circ f_{\ell+1}^{(-)} \circ \cdots \circ f_{k-1}^{(-)} (R_k), \nabla_aq_\ell \Big\rangle,
\end{equation*}
we obtain for $(E^n)'(0)$
\begin{equation}
\label{energy}
\begin{aligned}
&- \int_0^1 \bigg( \,\bigg\langle \nabla_s{x_0}' + \frac{1}{n} \sum_{k=0}^{n-1} f_0^{(-)}\circ \cdots \circ f_{k-1}^{(-)}(R_k), \partial_a x_0 \bigg\rangle \\
&+ \frac{1}{n} \sum_{k=0}^{n-1} \bigg\langle \nabla_s^2q_k + \frac{1}{n} \sum_{\ell=k+1}^{n-1} g_k^{(-)} \circ f_{k+1}^{(-)} \circ \cdots \\
&\hspace{13em}\circ f_{\ell-1}^{(-)}(R_\ell), \nabla_aq_k \bigg\rangle \Bigg) \mathrm ds, 
\end{aligned}
\end{equation}
where in the last sum we use the convention $\sum_{\ell=n}^{n-1} =0$. Since this quantity has to vanish for any choice of $(\partial_ax_0(0,\cdot), \nabla_aq_0(0,\cdot), \hdots, \nabla_aq_{n-1}(0,\cdot))$, the geodesic equations for the discrete metric are
\begin{align*}
&\nabla_s{x_0}' + \frac{1}{n} \sum_{k=0}^{n-1} f_0^{(-)}\circ \cdots \circ f_{k-1}^{(-)}(R_k)  = 0, \\
&\nabla_s^2q_k + \frac{1}{n} \sum_{\ell=k+1}^{n-1} g_k^{(-)} \circ f_{k+1}^{(-)} \circ \cdots \circ f_{\ell-1}^{(-)}(R_\ell) = 0, 
\end{align*}
for all $k=0, \hdots, n-1$, with the conventions $\sum_{\ell=n}^{n-1} = 0$ and $f_0 \circ \cdots \circ f_{-1} := \text{Id}$.
\end{proof}

\noindent
\textbf{Remark 4}\textit{
Let $[0,1]\ni s\mapsto c(s,\cdot)\in \mathcal M$ be a $C^1$ path of smooth curves and $[0,1]\ni s\mapsto \alpha(s)\in M^{n+1}$ the discretization of size $n$ of $c$. We denote as usual by $q:=c_t/|c_t|^{1/2}$ and $(q_k)_k$ their respective SRV representations. When $n\rightarrow \infty$ and $|\tau_k|\rightarrow 0$ while $n|\tau_k|$ stays bounded for all $0\leq k\leq n$, the coefficients of the discrete geodesic equation \eqref{disgeodeq} for $\alpha$ converge to the coefficients of the continuous geodesic equation \eqref{contgeodeq} for $c$, i.e.
\begin{align*}
&\nabla_s{x_0}'(s) = - r_0(s) + o(1),\\
&\nabla_s^2q_k(s) = - |q_k(s)| (r_{k}(s) + r_{k}(s)^T) +o(1),
\end{align*}
for all $s\in[0,1]$ and $k=0,\hdots, n-1$, where $r_{n-1}=0$ and for $k = 1,\hdots, n-2$,
\begin{equation*}
r_k(s) := \frac{1}{n} \!\sum_{\ell=k+1}^{n-1} \!\!P_c^{\frac{l}{n},\frac{k}{n}}\big( \mathcal R(q, \nabla_sq)c_s(s,\tfrac{\ell}{n})\big) \,\underset{n\to\infty}{\rightarrow} \, r(s,\tfrac{k}{n}),
\end{equation*}
with the exception that the sum starts at $\ell=0$ for $r_0$. 
}
\begin{proof}
This is due to three arguments : (1) at the limit, $f_k(w) = w + o(1/n)$ and $g_k(w) = |q_k| (w + {w}^T)+ o(1/n)$, (2) parallel transport along a piecewise geodesic curve uniformly converges to the parallel transport along the limit curve, and (3) the discrete curvature term $R_k(s)$ converges to the continuous curvature term $\mathcal R(q,\nabla_sq)c_s(s,\tfrac{k}{n})$ for all $k$. Indeed, let $\hat c$ be the unique piecewise geodesic curve of which $\alpha$ is the discretization, i.e. $c\big(\frac{k}{n}\big)=\hat c\big(\frac{k}{n}\big)=x_k$ for all $k=0,\hdots,n$ and $\hat c$ is a geodesic on each segment $\big[\tfrac{k}{n}, \tfrac{k+1}{n}\big]$. Defining\begin{align*}
&\hat r_0 := \tfrac{1}{n}\big(R_{0}+f_{0}^{(-)}(R_{1})+\hdots + f_{0}^{(-)}\circ \cdots \circ f_{n-2}^{(-)}(R_{n-1})\big)\\
&\hat r_k := \tfrac{1}{n}\big(R_{k+1}+f_{k+1}^{(-)}(R_{k+2})+\hdots \\
&\hspace{4.5em}+ f_{k+1}^{(-)}\circ \cdots \circ f_{n-2}^{(-)}(R_{n-1})\big) \quad 1\leq k \leq n-2,\\
&\hat r_{n-1}:=0,
\end{align*}
the geodesic equations can be written in terms of the vectors $\hat r_k$ 
\begin{align*}
&\nabla_s{x_0}'(s) + \hat r_0(s) = 0,\\
&\nabla_s^2q_k(s) + g_k^{(-)}\big(\hat r_{k}(s)\big) = 0.
\end{align*}
We can show that for any $0\leq k\leq \ell\leq n-2$ and any vector $w\in T_{x_{\ell+1}}M$,
\begin{align*}
&\left| f_k^{(-)}\circ\cdots\circ f_\ell^{(-)}(w) - P_c^{\frac{\ell+1}{n},\frac{k}{n}}(w)\right|\\
&\leq \sum_{j=k}^\ell |a_j-1|\cdot \left| f_{j+1}^{(-)}\circ\cdots\circ f_\ell^{(-)}(w)\right|\\
&\hspace{6em}+ \left| P_{\hat c}^{\frac{\ell+1}{n},\frac{k}{n}}(w)-P_c^{\frac{\ell+1}{n},\frac{k}{n}}(w)\right|.
\end{align*}
Since $|a_j-1|/|\tau_k|^2\to 0$ when $n\to \infty$ and $n|\tau_k|$ stays bounded, we have for all $0\leq j \leq n$ and $n$ large enough $|a_j-1|\leq \frac{1}{n^2}$, and using the fact that parallel transport along a piecewise geodesic curve uniformly converges to the parallel transport along the limit curve, we get
\begin{equation*}
| f_k^{(-)}\circ\cdots\circ f_\ell^{(-)}(w) - P_c^{\frac{\ell+1}{n},\frac{k}{n}}(w)| \to 0 
\end{equation*}
when $n\to \infty$. Now, denoting by 
\begin{equation*}
R(s,t):=\mathcal R(q,\nabla_sq)c_s(s,t) 
\end{equation*}
the curvature term involved in the continuous geodesic equations, we have since ${x_k}'(s)=c_s(s,\tfrac{k}{n})$ and $| \mathcal R(X,Y)Z| \leq |K| \cdot(|\langle Y,Z\rangle| |X| + |\langle X,Z,\rangle| |Y|) \leq 2 |K|\cdot |X|\cdot |Y| \cdot |Z|$ by Cauchy Schwarz,
\begin{align*}
|R_k - R(\tfrac{k}{n})| &\leq |\mathcal R(q_k - q(\tfrac{k}{n}), \nabla_sq_k){x_k}'| \\
& \hspace{3em}+|\mathcal R(q(\tfrac{k}{n}), \nabla_sq_k - \nabla_sq(\tfrac{k}{n})){x_k}'|\\
&\leq |q_k - q(\tfrac{k}{n})| \cdot  |\nabla_sq_k| \cdot  |{x_k}'| \\
&\hspace{3em}+ |q(\tfrac{k}{n})|\cdot |\nabla_sq_k - \nabla_sq(\tfrac{k}{n})| \cdot |{x_k}'|
\end{align*}
Let us show that both summands of this upper bound tend to $0$ when $n\to \infty$.
\begin{align*}
&|q_k - q(\tfrac{k}{n})| = \left| |n\tau_k|^{-\frac{1}{2}} n\tau_k - |c_t(\tfrac{k}{n})|^{-\frac{1}{2}}c_t(\tfrac{k}{n}) \right| \\
&\leq ||n\tau_k|^{-\frac{1}{2}} - |c_t(\tfrac{k}{n})|^{-\frac{1}{2}}|\cdot |n\tau_k| + |c_t(\tfrac{k}{n})|^{-\frac{1}{2}}|n\tau_k - c_t(\tfrac{k}{n})|\\
&= \frac{|n\tau_k| - |c_t(\tfrac{k}{n})|}{|n\tau_k|^{\frac{1}{2}} + |c_t(\tfrac{k}{n})|^{\frac{1}{2}}}\cdot |n\tau_k| + |c_t(\tfrac{k}{n})|^{-\frac{1}{2}}|n\tau_k - c_t(\tfrac{k}{n})|\\
&\leq \left(\frac{|n\tau_k|}{|n\tau_k|^{\frac{1}{2}} + |c_t(\tfrac{k}{n})|^{\frac{1}{2}}} + |c_t(\tfrac{k}{n})|^{-\frac{1}{2}}\right)|n\tau_k - c_t(\tfrac{k}{n})|
\end{align*}
and since the portion of $c(s,\cdot)$ on the segment $[\tfrac{k}{n},\tfrac{k+1}{n}]$ is close to a geodesic at the limit, $|n\tau_k - c_t(\tfrac{k}{n})|\to 0$ when $n\to \infty$, and so does $|q_k(s) - q(\tfrac{k}{n})|$. Similarly,
\begin{align*}
|\nabla_sq_k-\nabla_sq(\tfrac{k}{n})| &= \Big| |n\tau_k|^{-1/2}(n\nabla_s\tau_k - \tfrac{1}{2}{n\nabla_s\tau_k}^T) \\
&\hspace{1em}- |c_t|^{-1/2}(\nabla_sc_t(\tfrac{k}{n}) - \tfrac{1}{2}{\nabla_sc_t(\tfrac{k}{n})}^T)\Big|\\
&\leq ||n\tau_k|^{-1/2} - |c_t(\tfrac{k}{n})|^{-1/2}|\cdot |n\nabla_s\tau_k| \\
&\hspace{1em}+ |c_t|^{-1/2}|n\nabla_s\tau_k - \nabla_sc_t(\tfrac{k}{n})|,
\end{align*}
where once again $||n\tau_k|^{-1/2} - |c_t(\tfrac{k}{n})|^{-1/2}|\to 0$ and $|n\nabla_s\tau_k|$ is bounded. The last term can be bounded, for $n$ large enough, by
\begin{align*}
|n\nabla_s\tau_k - \nabla_sc_t(\tfrac{k}{n})| &\leq |n\nabla_s\tau_k - n\big(c_s(\tfrac{k+1}{n})^\parallel - c_s(\tfrac{k}{n})\big)| \\
&+  |\nabla_tc_s(\tfrac{k}{n}) - n\big(c_s(\tfrac{k+1}{n})^\parallel - c_s(\tfrac{k}{n})\big)|\\
&\leq n|1-b_k^{-1}|\cdot |c_s(\tfrac{k+1}{n})^\parallel - c_s(\tfrac{k}{n})| \\
&\hspace{9em}+ \frac{1}{n}|\nabla_t\nabla_tc_s|_\infty \\
&\leq \frac{1}{n}( |\nabla_tc_s|_\infty + |\nabla_t\nabla_tc_s|_\infty),
\end{align*}
since $\nabla_s\tau_k = (D_\tau \alpha')_k = ({x_{k+1}}^\parallel - x_k)^T + b_k^{-1}({x_{k+1}}'-{x_k}')^N$ and $b_k^{-1}\to 1$. Finally, we can see that
\begin{align*}
&|\hat r_0(s)- r_0(s)|\leq \frac{1}{n} | R_0 - R(0)| + \frac{1}{n} \sum_{\ell=0}^{n-2}|R_{\ell+1}-R(\tfrac{\ell+1}{n})| \\
&+ \frac{1}{n}\sum_{\ell=0}^{n-2} \left| f_0^{(-)}\circ \hdots \circ f_\ell^{(-)} (R_{\ell+1}) - P_c^{\frac{\ell+1}{n},0}(R_{\ell+1})\right|
\end{align*}
goes to $0$ when $n\to \infty$. We can show in a similar way that $|g_k^{(-)}(\hat r_k) - |q_k|(r_k+{r_k}^T)|\to 0$ when $n\to \infty$.
\end{proof}

\noindent
\textbf{Proposition 7 (Discrete exponential map)}\textit{
Let $[0,1] \ni s\mapsto \alpha(s) = (x_0(s), \hdots, x_n(s))$ be a geodesic path in $M^{n+1}$. For all $s\in [0,1]$, the coordinates of its acceleration $\nabla_s\alpha'(s)$ can be iteratively computed in the following way
\begin{equation*}
\begin{aligned}
&\nabla_s{x_0}' =-  \frac{1}{n} \Big( R_0 + f_0^{(-)}(R_1)\\
&\hspace{7em} + \hdots + f_0^{(-)}\circ \cdots \circ f_{n-2}^{(-)} (R_{n-1})\Big),\\
&\nabla_s{x_{k+1}}'^\parallel = \nabla_sf_k({x_k}') + f_k(\nabla_s{x_k}') + \frac{1}{n} \nabla_sg_k(\nabla_sq_k) \\
&\hspace{7em}+ \frac{1}{n} g_k(\nabla_s^2q_k) +\mathcal R( \tau_k,Y_k)({x_{k+1}}'^\parallel),
\end{aligned}
\end{equation*}
for $k=0,\hdots,n-1$, where the $R_k$'s are defined as in Proposition \ref{prop:geodeq}, the symbol $\cdot^\parallel$ denotes the parallel transport from $x_{k+1}(s)$ back to $x_k(s)$ along the geodesic linking them, the maps $\nabla_sf_k$ and $\nabla_sg_k$ are given by Lemma \ref{lem:ns}, $Y_k$ is given by Equation \eqref{yk} and
\begin{equation*}
\begin{aligned}
& \nabla_s\tau_k = (D_\tau \alpha')_k, \quad \nabla_sv_k = \frac{1}{|\tau_k|} \left( \nabla_s\tau_k - {\nabla_s\tau_k}^T \right),\\
&\nabla_sq_k = \sqrt{\frac{n}{|\tau_k|}} \left( \nabla_s\tau_k - \frac{1}{2}{\nabla_s\tau_k}^T \right),\\
&\nabla_s^2q_k = - \frac{1}{n} \,\,g_k^{(-)}\Big( R_{k+1} + f_{k+1}^{(-)}(R_{k+2}) \\
&\hspace{7em}+ \hdots + f_{k+1}^{(-)} \circ \cdots \circ f_{n-2}^{(-)}(R_{n-1})\Big).
\end{aligned}
\end{equation*}
}
\begin{proof}
For all $s\in[0,1]$, we initialize $\nabla_s{x_k}'(s)$ for $k=0$ using the first geodesic equation in \eqref{disgeodeq}; the difficulty lies in deducing $\nabla_s{x_{k+1}}'(s)$ from $\nabla_s{x_k}'(s)$. Just as we have previously obtained \eqref{rec1}, we can obtain by replacing the derivatives with respect to $a$ by derivatives with respect to $s$
\begin{align}
{{x_{k+1}}'}^\parallel  &= {{x_k}'}^T + a_k\, {{x_k}'}^N + {\nabla_sq_k}^T + b_k {\nabla_s\tau_k}^N,\label{xkplus1}\\
{{x_{k+1}}'}^\parallel  &= f_k({x_k}') + \frac{1}{n}\, g_k(\nabla_sq_k),\nonumber
\end{align}
and by differentiating with respect to $s$
\begin{equation}
\label{nspkplus1}
\begin{aligned}
&\nabla_s\left({x_{k+1}}'^\parallel \right) = \nabla_sf_k({x_k}') + f_k(\nabla_s{x_k}') \\
&\hspace{7em}+ \frac{1}{n} \nabla_sg_k(\nabla_sq_k) + \frac{1}{n}g_k(\nabla_s^2q_k).
\end{aligned}
\end{equation}
We have already computed \eqref{nswpar} the covariant derivative of a vector field $s \mapsto {w_{k+1}(s)}^{\parallel}\in T_{x_{k}(s)}M$ and so we can write
\begin{equation*}
\nabla_s\big({x_{k+1}}'^\parallel \big) = \big( \nabla_s{x_{k+1}}'\big)^\parallel +  \mathcal R(Y_k,\tau_k)\big({x_{k+1}}'^\parallel\big),
\end{equation*}
where $Y_k$ is defined by Equation \eqref{yk}. Together with Equation \eqref{nspkplus1}, this gives the desired equation for ${\nabla_s{x_{k+1}}'}^\parallel$. Finally, $\nabla_s\tau_k=(D_\tau \alpha')_k$ results directly from \eqref{xkplus1}, $\nabla_s^2q_k$ is deduced from the second geodesic equation and the remaining equations follow from simple computation.
\end{proof}

\noindent
\textbf{Proposition 8 (Discrete Jacobi fields)}\textit{
Let $[0,1] \ni s\mapsto \alpha(s) = (x_0(s), \hdots, x_n(s))$ be a geodesic path in $M^{n+1}$, $[0,1]\ni s\mapsto J(s)=(J_0(s),\hdots, J_n(s))$ a Jacobi field along $\alpha$, and $(-\delta, \delta) \ni a \mapsto \alpha(a,\cdot)$ a corresponding family of geodesics, in the sense just described. Then $J$ verifies the second order linear ODE
\begin{align*}
&\nabla_s^2J_0 = \mathcal R({x_0}', J_0){x_0}' - \frac{1}{n}\Big( \nabla_aR_0 + f_0^{(-)}(\nabla_aR_1) + \hdots\\
& +  f_0^{(-)}\circ\cdots\circ f_{n-2}^{(-)}(\nabla_aR_{n-1})\Big)\\
&\hspace{0em} -\frac{1}{n} \sum_{k=0}^{n-2} \sum_{\ell = 0}^k f_0^{(-)}\circ \cdots \circ\nabla_a\big(f_\ell^{(-)}\big)\circ\cdots \circ f_{k}^{(-)}(R_{k+1}),\\
&{\nabla_s^2J_{k+1}}^\parallel = f_k(\nabla_s^2J_k) + 2 \nabla_sf_k(\nabla_sJ_k) + \nabla_s^2f_k(J_k) \\
&+ \frac{1}{n}  g_k(\nabla_s^2\nabla_aq_k)+ \frac{2}{n} \nabla_sg_k(\nabla_s\nabla_aq_k)+ \frac{1}{n}\nabla_s^2g_k(\nabla_aq_k)\\
&+ 2\mathcal R(\tau_k, Y_k)({\nabla_sJ_{k+1}}^\parallel) +\mathcal R(\nabla_s\tau_k,Y_k)({J_{k+1}}^\parallel) \\
&+ \mathcal R(\tau_k,\nabla_sY_k)({J_{k+1}}^\parallel) +\mathcal R(\tau_k,Y_k)\Big(\mathcal R(Y_k,\tau_k)({J_{k+1}}^\parallel)\Big), 
\end{align*}
for all $0\leq k \leq n-1$, where $R_k := \mathcal R(q_k,\nabla_sq_k){x_k}'$ and the various covariant derivatives according to $a$ can be expressed as functions of $J$ and $\nabla_sJ$, 
\begin{align*}
& \nabla_aR_k = \mathcal R\big(\nabla_aq_k,\nabla_sq_k\big){x_k}' + \mathcal R\big(q_k,\nabla_s\nabla_aq_k \\
&+ \mathcal R(J,{x_k}')q_k \big){x_k}' + \mathcal R\big(q_k,\nabla_sq_k)\nabla_sJ_k,
\end{align*}
\begin{align*}
&\nabla_aq_k = \sqrt{\frac{n}{|\tau_k|}} \left( \nabla_a\tau_k - \frac{1}{2}{\nabla_a\tau_k}^T \right),\quad \nabla_a\tau_k = (D_\tau J)_k, \\
&\nabla_av_k = \frac{1}{|\tau_k|} \left( \nabla_a\tau_k - {\nabla_a\tau_k}^T \right),\\
&\nabla_s\nabla_aq_k = n \, {g_k}^{-1}\big( (\nabla_sJ_{k+1})^\parallel + \mathcal R(Y_k,\tau_k)({J_{k+1}}^\parallel) \\
&- \nabla_sf_k(J_k) - f_k(\nabla_sJ_k) \big)  + n \, \nabla_s\big({g_k}^{-1}\big)\big({J_{k+1}}^\parallel - f_k(J_k)\big), \\
&\nabla_s^2\nabla_aq_k = - \frac{1}{n} \!\sum_{\ell=k+1}^{n-1} g_k^{(-)} \circ f_{k+1}^{(-)}\circ\cdots\circ f_{\ell-1}^{(-)}(\nabla_aR_\ell)\\
&+\mathcal R(\nabla_s{x_k}',J_k)q_k + \mathcal R({x_k}',\nabla_sJ_k)q_k + 2\mathcal R({x_k}',J_k)\nabla_sq_k \\
&- \frac{1}{n}\! \sum_{\ell=k+1}^{n-1} \!\sum_{j = k}^{\ell-1} g_k^{(-)}\circ \cdots \circ\nabla_a\big(f_j^{(-)}\big)\circ\cdots \circ f_{\ell-1}^{(-)}(R_\ell),\\
&\nabla_sY_k = (\nabla_s{x_k}')^T + b_k(\nabla_s{x_k}')^N+ \partial_sb_k ({x_k}')^N\\
& + (1-b_k)\big(\langle {x_k}',\nabla_sv_k\rangle v_k \langle {x_k}',v_k\rangle \nabla_sv_k\big) +\tfrac{1}{2}(\nabla_s^2\tau_k)^T \\
&+ K\frac{1-a_k}{|\tau_k|^2}(\nabla_s^2\tau_k)^N + \partial_s\Big(K\frac{1-a_k}{|\tau_k|^2}\Big)(\nabla_s\tau_k)^N \\
&+\Big(\tfrac{1}{2}-K\frac{1-a_k}{|\tau_k|^2}\Big)(\langle \nabla_s\tau_k,\nabla_sv_k\rangle v_k + \langle \nabla_s\tau_k,v_k\rangle \nabla_sv_k),
\end{align*}
with the notation conventions $f_{k+1}^{(-)} \circ \hdots \circ f_{k-1}^{(-)}:=Id$, $\sum_{\ell=n}^{n-1}:=0$ and with the maps
\begin{align*}
&\nabla_a\big(f_k^{(-)}\big)(w)=(\nabla_af_k)^{(-)}(w) + f_k\Big(\mathcal R(Z_k,\tau_k)({w_{k+1}}^\parallel)\Big),\\
&\nabla_a\big(g_k^{(-)}\big)(w)=(\nabla_ag_k)^{(-)}(w) + g_k\Big(\mathcal R(Z_k,\tau_k)({w_{k+1}}^\parallel)\Big),
\end{align*}
\begin{align*}
&\nabla_s\big({g_k}^{-1}\big)(w) = \partial_s{|q_k|}^{-1} |q_k| {g_k}^{-1}(w) +\! |q_k|^{-1}\partial_s(b_k^{-1})\, {w}^N\\
&\hspace{1em}+ |q_k|^{-1} \, \big(1/2 - b_k^{-1}\big)\big(\langle w,\nabla_sv_k\rangle v_k + \langle w,v_k\rangle\nabla_sv_k\big),
\end{align*}
and
\begin{equation*}
Z_k = {J_k}^T + b_k {J_k}^N + \tfrac{1}{2}{\nabla_a\tau_k}^T + K\frac{1-a_k}{|\tau_k|^2}{\nabla_a\tau_k}^N.
\end{equation*}
}

\begin{proof}
For all $a\in(-\delta,\delta)$, $\alpha(a,\cdot)$ verifies the geodesic equations \eqref{disgeodeq}. Taking the covariant derivative of these equations according to $a$ we obtain
\begin{align}
&\nabla_a\nabla_s\partial_s{x_0} \\
&+ \frac{1}{n} \sum_{k=0}^{n-1} \nabla_a\Big(f_0^{(-)}\circ \cdots \circ f_{k-1}^{(-)} \big( \mathcal R(q_k,\nabla_sq_k)\partial_sx_k \big) \Big) = 0, \nonumber \\
&\nabla_a\nabla_s^2q_k + \frac{1}{n} \sum_{\ell=k+1}^{n-1} \nabla_a\Big(g_k^{(-)} \circ f_{k+1}^{(-)} \circ \cdots \label{nansnsqk}\\
&\hspace{10em} \circ f_{\ell-1}^{(-)}\big( \mathcal R(q_\ell, \nabla_sq_\ell)\partial_sx_\ell \big) \Big)= 0. \nonumber
\end{align}
Since for $a=0$, $\nabla_a\nabla_s\partial_sx_0 = \nabla_s^2J_0 + \mathcal R(J_0,\partial_sx_0)\partial_sx_0$, we get
\begin{align*}
&\nabla_s^2J_0 = \mathcal R(\partial_sx_0, J_0)\partial_sx_0 \\
&\hspace{2em}- \frac{1}{n} \sum_{k=0}^{n-1} \nabla_a\Big( f_0^{(-)}\circ \cdots \circ f_{k-1}^{(-)}\big(\mathcal R(q_k,\nabla_sq_k)\partial_sx_k \big)\Big),
\end{align*}
and the differentiation
\begin{align*}
&\nabla_a\big( f_0^{(-)}\circ \cdots \circ f_{k-1}^{(-)}(R_k)\big) = f_0^{(-)}\circ \cdots \circ f_{k-1}^{(-)}(\nabla_aR_k) \\
&\hspace{4em}+ \sum_{\ell=0}^{k-1} f_0^{(-)}\circ \cdots \circ\nabla_a\big(f_\ell^{(-)}\big)\circ \cdots \circ f_{k-1}^{(-)}(R_k)
\end{align*}
gives the desired equation for $\nabla_s^2J_0$. Now we will try to deduce $\nabla_s^2J_{k+1}$ from \eqref{nansnsqk}. If ${J_{k+1}}^\parallel(s)$ denotes the parallel transport of the vector $J_{k+1}(s)$ from $x_{k+1}(s)$ back to $x_k(s)$ along the geodesic that links them, we know from \eqref{rec1} that
\begin{equation}
\label{jkplus1}
{J_{k+1}}^\parallel = f_k(J_k) + \frac{1}{n} g_k(\nabla_aq_k).
\end{equation}
We also know from \eqref{nswpar} that
\begin{equation}
\label{nablasjpar}
(\nabla_sJ_{k+1})^\parallel = \nabla_s({J_{k+1}}^\parallel) + \mathcal R(\tau_k,Y_k)({J_{k+1}}^\parallel), 
\end{equation}
and by iterating
\begin{align*}
(\nabla_s^2J_{k+1})^\parallel &= \nabla_s\big((\nabla_s{J_{k+1}})^\parallel\big) + \mathcal R\big(\tau_k,Y_k\big)\big( (\nabla_sJ_{k+1})^\parallel \big)\\
&= \nabla_s^2({J_{k+1}}^\parallel) + \nabla_s\big(\mathcal R(\tau_k,Y_k)({J_{k+1}}^\parallel)\big) \\
& \hspace{8em}+ \mathcal R\big(\tau_k,Y_k\big)\big( (\nabla_sJ_{k+1})^\parallel \big)
\end{align*}
Developping and injecting Equation \eqref{jkplus1} in the latter gives
\begin{align*}
&(\nabla_s^2J_{k+1})^\parallel = \nabla_s^2\big( f_k(J_k) \big) +  \frac{1}{n} \nabla_s^2\big(g_k( \nabla_aq_k)\big) \\
&\hspace{4em}+ \mathcal R(\nabla_s\tau_k,Y_k)({J_{k+1}}^\parallel)+ \mathcal R(\tau_k,\nabla_sY_k)({J_{k+1}}^\parallel) \\
&\hspace{4em}+ \mathcal R(\tau_k,Y_k)(\mathcal R(Y_k,\tau_k)({J_{k+1}}^\parallel))\\
&\hspace{4em}+ 2\mathcal R\big(\tau_k,Y_k\big)\big( (\nabla_sJ_{k+1})^\parallel \big).
\end{align*}
Developping the covariant derivatives $\nabla_s^2\big( f_k(J_k) \big)$ and $\nabla_s^2\big(g_k( \nabla_aq_k)\big)$ gives the desired formula. Now let us explicit the different terms involved in these differential equations. Since $\nabla\mathcal R=0$ and $\nabla_a\partial_sx_k = \nabla_s\partial_ax_k$, we have
\begin{align*}
\nabla_aR_k &= \mathcal R(\nabla_aq_k,\nabla_sq_k)\partial_sx_k +\mathcal R(q_k,\nabla_a\nabla_sq_k)\partial_sx_k \\
&\hspace{12em}+ \mathcal R(q_k,\nabla_sq_k)\nabla_sJ_k\\
&=\mathcal R\big(\nabla_aq_k,\nabla_sq_k\big){x_k}' + \mathcal R\big(q_k,\nabla_s\nabla_aq_k\\
&\hspace{4em}+ \mathcal R(J,{x_k}')q_k \big){x_k}' + \mathcal R\big(q_k,\nabla_sq_k)\nabla_sJ_k.
\end{align*} 
By taking the inverse of \eqref{jkplus1} we get
\begin{equation*}
\nabla_aq_k = ng_k^{-1}\big({J_{k+1}}^\parallel - f_k(J_k)\big),
\end{equation*}
and taking the derivative according to $s$ on both sides and injecting Equation \eqref{nablasjpar} gives
\begin{align*}
\nabla_s\nabla_aq_k &= n \, {g_k}^{-1}\big( (\nabla_sJ_{k+1})^\parallel + \mathcal R(Y_k,\tau_k)({J_{k+1}}^\parallel)\\
& - \nabla_sf_k(J_k) - f_k(\nabla_sJ_k) \big)  \\
& + n \, \nabla_s\big({g_k}^{-1}\big)\big({J_{k+1}}^\parallel - f_k(J_k)\big).
\end{align*}
To obtain $\nabla_s^2\nabla_aq_k$, notice that
\begin{align*}
&\nabla_s^2\nabla_aq_k\\
&= \nabla_s\nabla_a\nabla_sq_k + \nabla_s\big(\mathcal R(\partial_sx_k,J_k)q_k \big), \\
&=\nabla_a\nabla_s^2q_k + \mathcal R(\partial_sx_k,J_k)\nabla_sq_k + \nabla_s\big(\mathcal R(\partial_sx_k,J_k)q_k \big),
\end{align*}
and injecting Equation \eqref{nansnsqk} with
\begin{align*}
&\nabla_a\big(g_k^{(-)} \circ f_{k+1}^{(-)} \circ \cdots \circ f_{\ell-1}^{(-)}( R_\ell ) \big)\\
&\hspace{4em}= g_k^{(-)} \circ f_{k+1}^{(-)}\circ\cdots\circ f_{\ell-1}^{(-)}(\nabla_aR_\ell)\\
&\hspace{4em} + \sum_{j = k}^{\ell-1} g_k^{(-)}\circ \cdots \circ \nabla_a\big(f_j^{(-)}\big)\circ \cdots \circ f_{\ell-1}^{(-)}(R_\ell),
\end{align*}
gives us the desired formula. $\nabla_sY_k$ results from simple differentiation, and differentiating the maps $f_k^{(-)}$ and $g_k^{(-)}$ with respect to $a$ is completely analogous to the the computations of Lemma \ref{lemfun}. Finally, the inverse of $g_k$ is given by ${g_k}^{-1} : T_{x_k}M \rightarrow T_{x_k}M$,
\begin{align*}
{g_k}^{-1} : w \mapsto  |q_k|^{-1} \left( b_k^{-1} w + \left( \tfrac{1}{2} - b_k^{-1}\right) {w}^T\right),
\end{align*}
and since 
\begin{equation*}
\nabla_s({w}^T)= (\nabla_sw)^T + \langle w,\nabla_sv_k\rangle v_k + \langle w,v_k\rangle \nabla_sv_k,
\end{equation*} 
it is straightforward to verify that 
\begin{equation*}
\nabla_s\big({g_k}^{-1}\big)(w) =\nabla_s\big({g_k}^{-1}(w)\big) - {g_k}^{-1}(\nabla_sw)
\end{equation*}
gives 
\begin{align*}
&\nabla_s\big({g_k}^{-1}\big)(w) \!=\! \partial_s{|q_k|}^{-1} |q_k| {g_k}^{-1}(w) \!+\! |q_k|^{-1}\partial_s(b_k^{-1})\, {w}^N\\
&\hspace{2em}+ |q_k|^{-1} \, \big(1/2 - b_k^{-1}\big)\big(\langle w,\nabla_sv_k\rangle v_k + \langle w,v_k\rangle\nabla_sv_k\big).
\end{align*}
\end{proof}


\end{document}